  \newcommand{\calF}{\mathcal{F}}
  \newcommand{\calG}{\mathcal{G}}
  \newcommand{\calK}{\mathcal{K}}
  \newcommand{\calL}{\mathcal{L}}
  \newcommand{\calP}{\mathcal{P}}
  \newtheorem{theorem}{Theorem}[section]
  \newtheorem{proof of the main theorem}[theorem]{Proof of the Main Theorem}
  \newtheorem{proposition}[theorem]{Proposition}
  \newtheorem{corollary}[theorem]{Corollary}
  \newtheorem{lemma}[theorem]{Lemma}
  \newtheorem*{conjecture*}{Conjecture}
  \theoremstyle{definition}
  \newtheorem{definition}[theorem]{Definition}
  \newtheorem*{claim*}{Claim}
  \newtheorem{example}[theorem]{Example}
  \newtheorem*{question 1 *}{Question 1}
  \newtheorem*{question 2 *}{Question 2}
  \newtheorem*{question 3 *}{Question 3}
  \newtheorem*{answer*}{Answer}
  \newtheorem*{application*}{Application}
  \newtheorem*{ideas*}{ideas}
  \theoremstyle{remark}
  \newtheorem{remark}[theorem]{Remark}
  \newtheorem*{remark*}{Remark}
  \newtheorem*{theorem*}{Theorem}
  \newcommand{\param}{{\mathchoice{\mkern1mu\mbox{\raise2.2pt\hbox{$
  \centerdot$}}
  \mkern1mu}{\mkern1mu\mbox{\raise2.2pt\hbox{$\centerdot$}}\mkern1mu}{
  \mkern1.5mu\centerdot\mkern1.5mu}{\mkern1.5mu\centerdot\mkern1.5mu}}}
\renewcommand{\setminus}{{\smallsetminus}}
  \newcommand{\co}{\colon\thinspace}
   \newcommand{\F}{{\mathbb{F}}}
\newcommand{\out}{\ensuremath{\mathrm{Out}(\mathbb{F}) } }
\newcommand{\aut}{\ensuremath{\mathrm{Aut}(\mathbb{F}) }}
\renewcommand{\F}{\ensuremath{\mathbb{F} } }
\newtheorem*{fbcrh}{Theorem~\ref{fbcrh}}
\begin{document}

%section{Title and abstract}

\title{Relative hyperbolicity of free extensions of free groups}

\author{Pritam Ghosh}\thanks{The first author was supported by the Ashoka University faculty research grant.}
 \address{Department of Mathematics \\
Ashoka University\\
  Haryana 131029, India\\}
  \email{pritam.ghosh@ashoka.edu.in}
\author {Funda G\"ultepe}\thanks{The second author was partially
supported by NSF grant DMS-2137611.}
\address{Department of Mathematics and Statistics\\
 University of Toledo\\
 Toledo, OHIO}
\email{funda.gultepe@utoledo.edu}
%\urladdr{http://www.math.utoledo.edu/~gultepe/}

\begin{abstract}
We study relative hyperbolicity of free-by-free groups: Given a collection of  outer automorphisms of the free group we give sufficient conditions for some power of these automorphisms to generate a free group so that the corresponding  extension is relatively hyperbolic. Namely, if $\phi_1, \cdots \phi_k $  is a collection of exponentially growing outer automorphisms with a common invariant \emph{subgroup system} such that any conjugacy class in the complement of this system grows exponentially under iteration by all $\phi_i$, then such a subgroup system can be used to construct a collection of peripheral subgroups relative to which, the extension of $\F$ by the free group generated by sufficiently high powers of $\phi_1, \cdots \phi_k $  will be hyperbolic.
Moreover, we show that our conditions are also necessary if we are given a free-by-free group with a relative hyperbolic structure where the generating outer automorphisms have \emph{cusp-preserving} property (motivated by homeomorphisms of surfaces which preserve cusps). \\

  \vspace{0.5cm}
%\keywords{Free groups \and outer automorphisms \and train track }
% \PACS{PACS code1 \and PACS code2 \and more}
% \subclass{MSC code1 \and MSC code2 \and more}

\end{abstract}

\maketitle

\section{Introduction}
\label{intro}

Let $\F$ be a free group of finite rank $\geq 3$. The quotient group
$\aut/\mathrm{Inn}(\F)$, denoted by $\out$,  is called the group  of outer automorphisms of $\F$. An element $\phi\in \out$ is called \emph{exponentially growing} if for some conjugacy class $[\alpha]$ of an element $\alpha\in \mathbb F$,  the word length of $\phi^{i}([\alpha])$ grows exponentially with $i$ for any fixed generating set of $\F$. We will call a subgroup of $\out$ exponentially growing if all of its elements are. For any subgroup ${Q}<\out$, the short exact sequence
 \[1\to \F \to \aut \to \out \to 1 \]
 induces a short exact sequence
 \[ 1\to \F \to E_{Q}  \to {Q} \to 1.\]
 The extension group $E_{Q}$, which is the pullback of ${Q}$ to $\aut$, is a subgroup of $\aut$. 
 In fact any extension, say ${E}$  of $\F$ always surjects onto some $E_{Q}$ in this form via the homomorphism $\pi:{E} / \F \to \out$. 
  When ${Q}$ is free group of rank $\geq 2$, we say that $ E_{Q}$ is a free-by-free group and it is known that $ E_{Q}\cong \F\rtimes \widehat{Q}$, where $\widehat{Q}$ is some (any) lift of ${Q}$ to $\aut$. 

 In this paper we give  sufficient conditions for $E_Q$ to be relatively hyperbolic using the dynamics of the generators of ${Q} < \out$. In addition, we show that the conditions are also necessary when we impose the additional condition that $Q$ preserves the conjugacy class of the subgroup which is the intersection of $\F$ with peripheral subgroups (we call this \emph{cusp preserving}) and high enough powers of automorphisms are taken.
 %Moreover, we have ensured that our hypotheses can be checked using train track maps. 
 This contributes to a growing body of literature devoted to understanding the geometry of extensions of free groups. 

 Introduced by Gromov in \cite{Gro-87}, relative hyperbolicity, which is the hyperbolicity of a group \emph{relative} to a collection $\calP$ of \emph{peripheral} subgroups,  has been a major topic in the study of  geometry of groups. Relatively hyperbolic groups were first visible as fundamental groups of complete noncompact hyperbolic manifolds of finite volume. As such, relative hyperbolicity is not only a  generalization of hyperbolicity (where $\calP=\emptyset$), also of geometric finiteness in the Kleinian group setting (see \cite{BowGF, BowGF2,Bow-97}). 

 A group $\calG$ is hyperbolic relative to a collection $\calP$ of peripheral subgroups if and only if the (Cayley graph of) the group $\calG$ is hyperbolic after \emph{electrifying/coning off} the Cayley graph of  $\calP$, and the elements of $\calP$  satisfy  a
“bounded coset penetration” property (\cite{Fa-98}). Due to the fact that each emphasize a different feature of relative hyperbolicity, we will use the different definitions and characterizations of relative hyperbolicity due to Farb (\cite{Fa-98}), Osin (\cite{Osin-06}) and Bowditch (\cite{Bow-97}). Moreover, using a combination theorem of Mj--Reeves (\cite{{MjR-08}}) which characterizes relative hyperbolicity for the Cayley graph of the extension group, we will develop our own characterization of relative hyperbolicity for our setting.
 
 In this paper we investigate following questions when ${Q} < \out$ is a finite rank subgroup, in the most general sense:

\begin{question 1 *}
What are the necessary and sufficient conditions on $Q$ so that $E_Q$ is relatively hyperbolic ? 
\end{question 1 *}  
     \begin{question 2 *} How does one check these conditions and find ways to check for possible obstructions? 
     \end{question 2 *}
We address the sufficient direction from Question 1 in this paper by defining a new  set of peripheral subgroups (\emph{admissible subgroup systems}) for collections of exponentially growing elements of \out. Exponentially growing outer automorphisms of $\out$ are the most general types of outer automorphisms since they are characterized only by their growth rate; which implies they can be reducible, yet the class of exponentially growing outer automorphisms include the ones which show pseudo--Anosov like behavior: atoroidal and fully irreducible outer automorphisms. 
%Our proofs ensure that this invariant peripheral set can be constructed and checked using train track theory of Bestvina--Handel (\cite{BF-92}) which is a tool that is generalized from Thurston's normal form for surface homeomorphisms. In this way we were able to characterize relative hyperbolicity via \emph{transition matrices} of exponentially growing automorphisms.     

%\subsection{Hyperbolicity of the extension group relative to admissible subgroup systems}
A \emph{subgroup system} of $\F$ is a finite collection of conjugacy classes of finite rank subgroups of $\F$. Every exponentially growing outer automorphism $\phi$ comes with an invariant set $\mathcal L^{\pm}(\phi)$ of attracting and repelling laminations for each corresponding exponentially growing stratum (see section \ref{sec:5}), which are computed using train tracks. We exploit this to determine a list of conditions on lamination sets (in section \ref{SA}) which prevent hyperbolicity, and we call a malnormal subgroup system which satisfies the properties we listed  an \emph{admissible subgroup system}. As such, admissible subgroup systems will act like building blocks of peripheral subgroups for us (which will be coned-off).

 Given an admissible  subgroup system $\mathcal{K}$, we let $\mathcal{L}_\mathcal{K}^\pm(\phi)$ denote the subset of  $\mathcal L^{\pm}(\phi)$ consisting of elements which are not supported by $\mathcal{K}$. We prove, 
\begin{theorem}\label{main1}
Let $\phi_1, \ldots , \phi_k $ be a collection of exponentially growing outer automorphisms of $\mathbb F$ such that $\phi_i, \phi_j$ do not have a common power whenever $i\neq j$. Then the following statements are equivalent:
\begin{enumerate}
    \item There exists a subgroup system $\mathcal{K} = \{[K_1], \ldots , [K_p]\}$ which is admissible for each $\phi_i$ and for which $\mathcal{L}_\mathcal{K}^\pm(\phi_i) \cap \mathcal{L}_\mathcal{K}^\pm(\phi_j) = \emptyset$ whenever $i\neq j$. 
    \item {For every free group $Q$ generated by sufficiently high powers of $\phi_i$'s, $\F\rtimes\widehat{Q}$ is hyperbolic relative to a collection $\{K_s \rtimes \widehat{Q}_s\}_{s= 1}^p$ where $\widehat{Q}_s$ is a lift of $Q$ that fixes $K_s$. }
\end{enumerate}
\end{theorem}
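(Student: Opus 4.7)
The plan is to prove the two implications separately, with the bulk of the work in $(1) \Rightarrow (2)$.

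For the sufficient direction $(1) \Rightarrow (2)$, I would apply a Mj--Reeves-style combination theorem to the extension $1 \to \F \to \F \rtimes \widehat{Q} \to Q \to 1$. The fiber $\F$ is coned off along the cosets of conjugates of $K_1, \ldots, K_p$; malnormality of $\mathcal{K}$, which is part of admissibility, makes the coned-off Cayley graph of $\F$ hyperbolic. The remaining hypothesis to check is a flaring (or qi-embedding) condition: a conjugacy class $[\alpha]$ not supported by $\mathcal{K}$ must stretch exponentially in the coned-off metric under any reduced word in the $\phi_i$'s, with $\phi_i$'s replaced by sufficiently high powers. Admissibility of $\mathcal{K}$ for each $\phi_i$ produces, for such an $[\alpha]$, attracting/repelling laminations in $\mathcal{L}^\pm_\mathcal{K}(\phi_i)$ together with the expected exponential stretch. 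The disjointness hypothesis $\mathcal{L}^\pm_\mathcal{K}(\phi_i) \cap \mathcal{L}^\pm_\mathcal{K}(\phi_j) = \emptyset$ and the no-common-power assumption give independent north-south dynamics on the appropriate lamination space, enabling a ping-pong argument that upgrades single-generator stretch to uniform flaring along arbitrary reduced words in high powers of the generators. For the cusp-preserving structure, $\phi_i$-invariance of the subgroup system $\mathcal{K}$ allows each generator of $Q$ to be lifted to an automorphism fixing $K_s$; assembling these lifts yields the $\widehat{Q}_s$ and hence the peripheral subgroups $K_s \rtimes \widehat{Q}_s$.

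For the necessary direction $(2) \Rightarrow (1)$, take $\mathcal{K} = \{[K_1], \ldots, [K_p]\}$ directly from the peripheral decomposition. Malnormality of peripherals in a relatively hyperbolic group, together with the $\widehat{Q}$-invariance of each $K_s$ that is built into cusp-preservation, forces admissibility of $\mathcal{K}$ for each $\phi_i$: any $\phi_i$-invariant conjugacy class on which $\phi_i$ fails to flare exponentially in the coned-off metric must be trapped in a peripheral, hence in $\mathcal{K}$. For the disjointness, if some $\Lambda \in \mathcal{L}^+_\mathcal{K}(\phi_i) \cap \mathcal{L}^+_\mathcal{K}(\phi_j)$ existed, a conjugacy class weakly attracted to $\Lambda$ would be stretched by both $\phi_i$ and $\phi_j$ along the same dynamical axis in the coned-off fiber; combined with the no-common-power hypothesis, this would produce two quasigeodesic rays in the coned-off $E_Q$ that fellow-travel on unbounded intervals, contradicting hyperbolicity of the coned-off graph.

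The main obstacle I expect is the upgrade in $(1) \Rightarrow (2)$ from single-generator stretching to uniform flaring for all reduced words in high powers of the $\phi_i$. Executing a ping-pong argument on the space of $\mathcal{K}$-unsupported laminations, and arranging the flaring constants and coning-off parameters so that the hypotheses of the combination theorem (including bounded coset penetration for the coned-off $E_Q$) hold uniformly, is where the technical weight sits. Producing the lifts $\widehat{Q}_s$ intrinsically from $\phi_i$-invariance of $\mathcal{K}$, rather than by ad hoc choices of generators, is a further delicate point that will require a careful bookkeeping of inner-automorphism adjustments.
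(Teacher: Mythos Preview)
Your overall architecture matches the paper's: $(1)\Rightarrow(2)$ via the Mj--Reeves combination theorem after coning off $\F$ along $\mathcal{K}$, with flaring obtained from a ping-pong/``$3$-out-of-$4$ stretch'' argument exploiting disjointness of the lamination neighborhoods; and $(2)\Rightarrow(1)$ by reading $\mathcal{K}$ off the peripherals and extracting admissibility from the flaring in the electrocuted bundle. Two points deserve correction, though.

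First, in $(1)\Rightarrow(2)$ you only name one flaring condition. The Mj--Reeves theorem requires \emph{two}: the hallways flare condition on the induced tree of coned-off spaces, \emph{and} the cone-bounded hallways strictly flare condition. The latter is not a formal consequence of the former; it is a separate statement about how the conjugators $x_t$ taking $K_t$ to $\Phi_s(K_t)$ grow in the electrocuted metric, and it is precisely what supplies bounded coset penetration (i.e.\ strong relative hyperbolicity). The paper proves this separately (Lemma~\ref{conjugatorstretch}, Propositions~\ref{cbhfc} and~\ref{fbfcbhfc}) using malnormality of $\mathcal{K}$ to show $x_t\notin K_s\cup K_t$ and then that $[x_t]$ is weakly attracted to elements of $\mathcal{L}^\pm_\mathcal{K}$. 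Your remark that ``bounded coset penetration'' is something to arrange is correct, but you should flag it as a distinct flaring hypothesis with its own proof, not as a byproduct of the main flaring.

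Second, your disjointness argument in $(2)\Rightarrow(1)$ is not the paper's and, as stated, does not yield a contradiction. Being stretched by both $\phi_i$ and $\phi_j$ is not itself problematic; the $3$-out-of-$4$ stretch says exactly that this happens generically. The paper's argument is in the opposite direction: if $\Lambda^+\in\mathcal{L}^+_\mathcal{K}(\phi_i)\cap\mathcal{L}^+_\mathcal{K}(\phi_j)$, then by \cite[Corollary~1.4]{HM-19} the dual $\Lambda^-$ is also common, and one builds a sequence of circuits $\{\gamma_k\}$ approximating a generic leaf $\ell$ of $\Lambda^+$. Since $\ell$ cannot be weakly attracted to any repelling lamination of either $\phi_i$ or $\phi_j$, the $3$-out-of-$4$ stretch fails for the $\gamma_k$'s with arbitrarily large exponents, contradicting the flaring in the electrocuted metric bundle $\mathcal{E}(E_Q,K_1,\ldots,K_p)$ that one extracts from relative hyperbolicity via \cite{MjS-16}. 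Your ``fellow-travelling quasigeodesic rays'' heuristic may be salvageable, but it needs to be made precise about which space's boundary is being used and why no-common-power forces distinct boundary points there.
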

Throughout the paper, we will assume that our automorphisms are \emph{rotationless}; which is a technical condition which ensures that everything periodic is fixed (see Section \ref{sec:CT}). The other conditions that we state above are very natural in the sense that they generalize  some important theorems given  in \cite [Theorem 5.1]{BFH-97}, \cite[Theorem 5.2]{BFH-97}, \cite[Proposition 3.13]{Gh-23} for free groups and  \cite[Theorem 4.9]{MjR-08}, \cite[Proposition 6.2]{Bow-07} for surface groups with boundary.

%We do not know if a free group $Q$ generated by sufficiently high powers of a geometric fully-irreducible and an atoroidal fully irreducible  can give a relatively hyperbolic structure for $E_Q$ for some collection of peripheral subgroups. As the above result shows, it certainly cannot have cusp-preserving relative hyperbolic structure.  

We say that $\phi_i, \phi_j$, $i\neq j$ are \emph{independent relative to $\mathcal{K}$ } if generic leaves of  elements of $\mathcal{L}^\pm_\mathcal{K}(\phi_i)$ and  $\mathcal{L}^\pm_\mathcal{K}(\phi_j)$ are not asymptotic to each other (see Section \ref{MtoRH}).
Let $\phi_1, \ldots , \phi_k $ be a collection as in Theorem \ref{main1}. Then condition $(1)$ in Theorem \ref{main1} can be rephrased to indicate the existence of an admissible system for an independent set $\phi_1, \ldots , \phi_k $ of automorphisms relative to that admissible system.

%To compare with the surface group case, one should think of $\F$ as the fundamental group of a surface with cusps and $[K_i]$ as the cusps themselves. The $\phi_i$'s are pseudo-Anosov homeomorphisms of such a surface. The admissible subgroup system  hypothesis implies that cusps are preserved by each $\phi_i$. Finally, statement (1) is equivalent to saying that the stable and unstable laminations corresponding to the pseudo-Anosov homeomorphisms are disjoint (hence the homeomorphisms are independent). 

%Given any finitely generated group $G$, we say that a collection of conjugacy classes of finitely generated subgroups  $\{K_i\}_i$ of $G$ is a \emph{malnormal subgroup system of $G$}  if each $K_i$ is a malnormal subgroup \emph{i.e.;} for all $w\notin K_i$, $w^{-1}K_i w \cap K_i = \{e\}$ and when $i\neq j$ we have $g^{-1} K_i g \cap K_j = {e}$ for all $g\in G$.  
\subsection{ From weak attraction (to the laminations) to admissible subgroup systems}
An outer automorphism is represented by a relative train track map on a marked metric graph; which is the analog of Thurston's normal form for surface diffeomorphisms (\cite{BH-92}). % Following  the analogy, curves and laminations on a surface are replaced by conjugacy classes and their limits under iterates of outer automorphisms.  
Moreover, the behavior of an automorphism on the strata  of a filtration (which is an analog of subsurface) of a marked graph will determine how the lines and circuits will converge. This dynamics will be one of our tools:  %under the iteration of an  automorphism some of the lines/circuits in the graph will converge to the \emph{attracting laminations} and some will not.
It is crucial for relative hyperbolicity to determine the circuits and lines (of a marked graph) which are \emph{(weakly) attracted} to the attracting laminations. The ones which do get attracted are the ones which are likely to satisfy \emph{flaring conditions} needed to prove (relative) hyperbolicity and the ones which do not  must be trapped in some \emph{peripheral subgroups}. We collect the lines and circuits which do not get attracted to the laminations under \emph{non--attracting subgroup system}s and  these will be the ones to be \emph{coned off (electrified)} to obtain relative hyperbolicity. 

The definition of the \emph{admissible subgroup system}  $\mathcal{K}$ for a single element $\phi\in\out$ is based on a list of standing assumptions (\ref{K}) the laminations associated to $\phi$ need to satisfy to ensure the attraction to the laminations (equivalently, exponential growth under iterations). We then state a \emph{generalized weak attraction theorem} (Theorem \ref{modwat})  which says that
a conjugacy class is not weakly attracted to an element of $\mathcal{L}_\mathcal{K}^+(\phi)$ if and only if it is carried by $\mathcal{K}$.
Using this, we conclude that every conjugacy class that is not carried by $\mathcal{K}$ flares (relative to $\mathcal{K}$): 
we first generalize the notion of
\emph{legality} of circuits (Section \ref{sec:legrate}) which first appeared in \cite{BFH-97} to our setting. We achieve this by calculating the \emph{legality} for each exponentially growing stratum whose laminations are not carried by $\mathcal{K}$. This calculation later tells us what portions of our paths will survive (not backtrack) after electrifying. Generalized weak attraction theorem now says that when iterated sufficiently many times by either $\phi$ or $\phi^{-1}$  every
conjugacy class that is not carried by $\mathcal{K}$ 
gains sufficiently long legal segments  relative to $\mathcal{K}$ (Lemma \ref{legality}). This result is then used to prove ``conjugacy flaring'' in  Proposition \ref{conjflare} and ``strictly flaring'' condition in Proposition \ref{strictflare}. The technique of proof in both these flaring results is a generalization of the technique used in \cite{BFH-97}. 

An example of an admissible subgroup system is the \emph{nonattracting sink} of an automorphism, whose construction has been shown in this paper. Further properties of the nonattracting sink and how it effects (relative) hyperbolicity will be investigated  in a subsequent paper.

\subsection{From admissible subgroup systems to relative hyperbolicity}
To prove relative hyperbolicity, we electrify all the copies of the admissible subgroup system $\calK$ in the extension group; and use three \emph{flaring} conditions for conjugacy classes along pairs of paths on electrified Cayley graph of the extension group (Propositions \ref{conjflare}, \ref{strictflare},\ref{cbhfc}) to show that electrified graph flares. This results in relative hyperbolicity, as indicated in Theorem 4.6 of Mj--Reeves (\cite{{MjR-08}}) (recorded as Theorem \ref{mj-reeves} in Section \ref{sec:relhyp}). 

The electrification process has two steps, and the \textit{cone bounded hallways strictly flare condition} (Proposition \ref{cbhfc}) is what gives \emph{strong} relative hyperbolicity. A generalization of the Bestvina-Feighn Annuli Flare Condition (\cite{BF-92}) into the framework of relative hyperbolicity; cone bounded hallway strictly flare condition is what establishes the \emph{mutually cobounded} property of Bowditch (see \cite[Definition 2.3, Lemma 2.4]{MjR-08} and hence \emph{bounded coset penetration property}. 

To prove the flaring conditions we use all the tools of the CT-- maps (which are special type of relative track maps, see Section \ref{sec:CT}) and the weak attraction property, which were generalized from the techniques used in \cite{BH-92}, \cite{BFH-97}, \cite{BFH-00}, \cite{FH-11}, \cite{HM-09} and \cite{HM-20}.

Following theorem is an outcome of the new generalized flaring conditions,  which was proven by Ghosh \cite{Gh-23} with more strict conditions on the flaring.

%As one of our building blocks, for the proof of the sufficient direction in  Theorem \ref{main1}, we reproved the  following main Theorem of \cite{Gh-23} while keeping more flexibility with the construction of the peripheral subgroups: 
%\textcolor{red}{writing this theorem here implies somehow that all we did was a simple generalization. Perhaps we should write how the peripheral groups are generalized or how this theorem was a bulildi g block for the main theorem. Also, did you decide to kep non-relative hyperbolicity with the other paper? }

%\textcolor{blue}{The proof of hallway flaring and the cone-bounded hallway flaring for the free-by-free case depends on the hallway flaring and cone-bounded hallway flaring for the free-by-cyclic case. Building bock in that sense.}

%\textcolor{blue}{Strictly speaking, we do not have any usable results on non-relative hyperbolicity, just an example sort of. But yes, whatever result we had has been moved to the other part. I don't think anyone would be interested in those results once it got diluted.}

\begin{theorem} \label{fbcrh} 
Let $\phi\in\out$ and $\mathcal{K}=\{[K_1], \ldots , [K_p]\}$ be an admissible subgroup system for $\phi$.

Then, the group $\F \rtimes \langle \phi \rangle$ is strongly hyperbolic relative to the collection of subgroups $\{K_s \rtimes \langle \Phi_s \rangle\}_{s=1}^p$, where $\Phi_s\in \aut$ is a lift of $\phi$ such that $\Phi_s (K_s) = K_s$.

\end{theorem}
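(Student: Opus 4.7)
The plan is to reduce this theorem to an application of the Mj--Reeves combination theorem (Theorem \ref{mj-reeves}) applied to the extension $E = \F\rtimes\langle\phi\rangle$ with the peripheral collection $\mathcal{P} = \{H_s\}_{s=1}^p$, where $H_s = K_s\rtimes\langle\Phi_s\rangle$ and $\Phi_s$ is the lift of $\phi$ given by admissibility (since $[K_s]$ is $\phi$-invariant, such a lift exists). First I would form the electrocuted Cayley graph $\widehat{E}$ by coning off all left cosets of every $H_s$, and then verify the three conditions of Mj--Reeves in order: (i) hyperbolicity of $\widehat{E}$ after the first stage of electrocution, (ii) a qi-preserving flaring condition along hallways, and (iii) the cone-bounded hallway strictly flare condition that upgrades weak to strong relative hyperbolicity.

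The engine driving (i) and (ii) is the generalized weak attraction dichotomy of Theorem \ref{modwat}: a conjugacy class $[\alpha]$ in $\F$ either is carried by $\mathcal{K}$ (and hence sits in a peripheral coset, which gets collapsed in $\widehat{E}$) or is weakly attracted to some leaf of $\mathcal{L}^+_\mathcal{K}(\phi)$ under forward iteration and to some leaf of $\mathcal{L}^-_\mathcal{K}(\phi)$ under backward iteration. Using a CT-representative of $\phi$ and the generalized legality calculation from \secref{legrate}, I would then invoke Lemma \ref{legality} to conclude that a non-carried conjugacy class accumulates long legal segments relative to $\mathcal{K}$ under sufficiently high iteration by either $\phi$ or $\phi^{-1}$. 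Legal segments cannot backtrack in the electrocuted metric because any maximal sub-path inside a coset of some $H_s$ would force a subword to be carried by $K_s$, contradicting legality relative to $\mathcal{K}$. This is precisely the combinatorial input feeding Proposition \ref{conjflare} (conjugacy flaring) and Proposition \ref{strictflare} (the strict flare condition for geodesic hallways).

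The step I expect to be the main obstacle is Proposition \ref{cbhfc}, the cone-bounded hallway strictly flare condition, which is what actually produces the Bowditch mutual coboundedness and hence bounded coset penetration (\cite[Definition 2.3, Lemma 2.4]{MjR-08}) for the $H_s$. This requires showing that two parallel peripheral cosets $gH_s$ and $g'H_{s'}$ cannot stay within bounded Hausdorff distance in $\widehat{E}$ along a long hallway, which I would prove by contradiction: such fellow-traveling would produce a conjugacy class lying in the intersection of two conjugates of subgroups in $\mathcal{K}$, which malnormality of the admissible system forbids. This is a direct adaptation of the Bestvina--Feighn annuli flare argument to the relative setting, and it is the step that most intimately uses the malnormality clause built into the definition of admissible subgroup system.

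Once all three flaring conditions have been verified, the conclusion is immediate from Theorem \ref{mj-reeves}: $E = \F\rtimes\langle\phi\rangle$ is strongly hyperbolic relative to $\{K_s\rtimes\langle\Phi_s\rangle\}_{s=1}^p$, which is exactly the statement of Theorem \ref{fbcrh}. I would note that this argument is the $k=1$ specialization of the forward direction of Theorem \ref{main1}; the one-automorphism case is cleaner because there is no need to separately verify that $\mathcal{L}^\pm_\mathcal{K}(\phi_i)\cap\mathcal{L}^\pm_\mathcal{K}(\phi_j)=\emptyset$, so the whole argument collapses to checking admissibility against the single CT-map for $\phi$.
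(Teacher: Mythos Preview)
Your overall strategy matches the paper's: invoke Proposition~\ref{strictflare} for the hallways-flare condition, Proposition~\ref{cbhfc} for the cone-bounded hallways strictly flare condition, and conclude via the Mj--Reeves combination theorem. The paper actually routes through Lemma~\ref{mjr}, the pared-down version of Theorem~\ref{mj-reeves}, which already disposes of the q.i.-embedded, strictly type-preserving, and q.i.-preserving electrocution conditions; so the only two items to check are exactly Propositions~\ref{strictflare} and~\ref{cbhfc}.

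Two points in your sketch are inaccurate and worth correcting. First, the electrocution in this setup is of cosets of $K_s$ inside each fiber $\F$ (producing the ``induced tree of coned-off spaces''), not of cosets of $H_s = K_s\rtimes\langle\Phi_s\rangle$ inside $E$; the $H_s$ appear only at the end as the maximal parabolics. Second, your description of the cone-bounded hallway condition and its proof is not what Proposition~\ref{cbhfc} does. A cone-bounded hallway has its boundary in the cone locus, and in this bundle the fiber segments of such a hallway are (translates of) the conjugators $x_t$ defined by $\Phi_s(K_t)=x_t^{-1}K_t x_t$ for $t\neq s$. The proof of Proposition~\ref{cbhfc} shows these $x_t$ flare by first using Lemma~\ref{conjugatorstretch} (malnormality enters there, to conclude $x_t\notin K_s\cup K_t$ and that $[x_t]$ is not carried by $\mathcal{K}$) and then feeding $x_t$ into Proposition~\ref{strictflare}. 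There is no Hausdorff fellow-traveling or intersection-of-conjugates contradiction; the argument is a direct growth estimate on the specific words $x_t$.
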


Existence of relatively hyperbolic free-by-cyclic extensions were given also by Dahmani-Suraj in \cite{Dah-Relhypfbc} (in fact, they prove something more general), however, our theorem is constructive and this construction is what we eventually exploit to get to the general free-by-free case.

\subsection{From relative hyperbolicity to admissible subgroup systems}

To prove the existence of an admissible subgroup system $\mathcal K$ given in Theorem \ref{main1} we use  various definitions of relative hyperbolicity (\cite{Osin-06}, \cite{Fa-98}, \cite{MjR-17}) along with malnormality. In this direction we do not need the group $Q$ to be free (see Theorem \ref{necrelhyp}).  The key takeaway here is that existence of  an admissible subgroup system for $Q$ (an algebraic property) and cusp-preserving relative hyperbolicity of $E_Q$ (a geometric property) are almost interchangeable notions; modulo passing to high powers. 

\subsection{History and motivation}
The interest in the geometry of extensions of groups started with Thurston's hyperbolization theorem \cite{Tharxiv} which states, in algebraic terms, that the $\mathbb Z$--extension of $\pi_1(S)$, where $S$ is a closed surface, is hyperbolic if and only if the cyclic group $\mathbb Z$ is generated by a pseudo-Anosov element of the mapping class group of $S$.  In the case of extensions of free groups, relative hyperbolicity is more appropriate notion given the variety of dynamical behavior of elements of $\out$ as there is more than one type of \emph{pseudo-Anosov behavior} in the $\out$ setting.

First generalization of Thurston's result to the free group setting is due to Bestvina--Feighn. Using a \emph{combination theorem}  they find examples of hyperbolic free-by-cyclic groups (\cite{BF-92}). The question of hyperbolicity of free-by-cyclic groups was settled by Brinkmann who showed that any atoroidal
automorphism induces a hyperbolic free-by-cyclic group (\cite{Br-00}). Relative hyperbolicity of free-by-cyclic groups was addressed almost simultaneously by Ghosh \cite{Gh-23}, Dahmani-Li \cite{DahLi-20} and Dahmani-Krishna \cite{Dah-Relhypfbc} and the property of non-relative hyperbolicity in this setting was addressed by Hagen (\cite{HagNib-18}). It was shown by Ghosh \cite{Gh-23} and Hagen \cite{HagNib-18} that when $\mathcal{Q}$ is infinite cyclic, $E_Q$ is hyperbolic relative to some collection of peripheral subgroups if and only if every conjugacy class $[w]$ in $\F$ grows exponentially under iteration by $\phi$, provided that $w$ is not in some conjugate of a peripheral subgroup.
Sufficient conditions for hyperbolicity of extensions of free groups have been addressed by Ghosh \cite{Gh-23} and  Uyanik \cite{Uya-17} for the free-by-free case and Dowdall-Taylor  for a more general case \cite{DowTay1}. 
In contrast, only known works addressing the relative hyperbolicity of extensions of free groups generated by  non-cyclic subgroups, are from the theory of mapping class groups of puncture surfaces (see Bowditch \cite{Bow-07}, Mj-Reeves \cite{MjR-08}). We attempt to partly fill this gap.

\subsection{Plan of the paper:}
%In our proofs we use dynamics of exponentially growing outer automorphisms via their topological representatives called \emph{relative train track maps} (see \ref{sec:2}) (and their special version \emph{completely split relative train track (CT) maps}) on the  space of lines of  a \emph{marked metric graph}. Relative train track maps were introduced by Bestvina-Handel \cite{BH-92} as analogs for Thurston's normal form for elements of the mapping class group of a compact surface equipped with a hyperbolic metric and later generalized by Bestvina-Feighn-Handel \cite{BFH-97}, \cite{BFH-00} and Feighn-Handel \cite{FH-11}. Rotationlessness for an automorphism  ensures that we will be able  to use CT--maps. 

In Section \ref{Prelim} we give all the preliminaries and the tools we use.

In Section \ref{SRH} we define admissible subgroup system and state the notion of legality of paths in our setting. Flaring conditions (conjugacy flaring \ref{conjflare}, hallways flaring (\ref{strictflare}), cone-bounded hallways flaring (\ref{cbhfc}) stated in the propositions in this section depend on this legality calculation used in length comparison lemma \ref{comparison} and legality growth \ref{legality}. We also give a proof of the free-by-cyclic case, Theorem \ref{fbcrh} in this section.

In Section \ref{MtoRH} we prove the first direction of Theorem  \ref{main1}.  The construction of an admissible subgroup system is explained in details in section \ref{admissconst}.  Section \ref{RHtoM} is devoted to prove the other direction of Theorem \ref{main1}.

\subsection {Acknowledgements} 
We thank Spencer Dowdall, Mark Feighn, Chris Leininger, Lee Mosher, Pranab Sardar, and Sam Taylor for their remarks and comments on the earlier version of the paper. We also would like to thank the anonymous referee for carefully reading the paper and providing numerous suggestions that have improved the exposition.

 \section{Preliminaries}\label{Prelim}

\subsection{Marked graphs, circuits and path: }

 A \emph{core graph} is a graph $G$  with no valence 0 or 1 vertices. A \textit{marked graph} is a graph $G$ which is a core graph equipped with  a homotopy equivalence $m: G\to R_n$ to the rose $R_n$  (where $n = \text{rank}(\F)$). The fundamental group of $G$ therefore can be identified with $\F$ up to an inner automorphism. There is a natural $\F$-equivariant uniform quasi isometry between the universal covers of $R_n$ and $G$.
 A \textit{circuit} in a marked graph is an immersion (i.e. locally injective and continuous map)
 of $S^1$ into $G$. The set of circuits in $G$ can be identified
 with the set of conjugacy classes in $\F$.

A \emph{path} is an immersion of the interval $[0, 1]$ into $G$ with endpoints at vertices of $G$. Every path can be written as a finite concatenation of edges of $G$, so that two consecutive edges do not cancel out (i.e., without backtracking)

  A \emph{ray} $\gamma$ is ``one-sided'' infinite concatenation of edges $\gamma = E_i E_{i+1} \ldots $, $i\in \mathbb{Z}_{\geq 0}$ in $G$ without backtracking, \emph{i.e.} $E_{j-1}^{-1} \neq E_j\neq E_{j+1}^{-1}$ (where $E_{s}^{-1}$ is $E_s$ traveled in opposite direction).
 A \emph{line} $\ell$ is a bi-infinite concatenation $\ell = \ldots E_{i-1} E_i E_{i+1} \ldots$, $i\in \mathbb{Z}$ of edges of $G$ without backtracking. Two lines are said to be asymptotic if they have a common sub-ray.
  Any continuous map $f$ from $S^1$ or $[0,1]$ to $G$ is freely homotopic to a locally injective and continuous map, in other words can be \textit{tightened} to a circuit or path. We will denote the tightened image of a path $\alpha$ under $f$ by $f_{\#}(\alpha)$ and we will  not distinguish between circuits or paths that differ by a homeomorphism of their respective domains.
\subsection{Topological representative, EG strata, NEG strata:}
\label{sec:2}
 A \textit{filtration}  of a marked graph $G$ is a strictly increasing sequence $G_0 \subset G_1 \subset \cdots \subset G_k = G$ of subgraphs $G_r$ with no isolated vertices that are called \textit{filtration elements}. The filtration is \textit{$f$-invariant} (or a homotopy equivalence $f: G \to G$ \textit{respects} the filtration) if $f(G_r) \subset G_r$ for all $r$.

The subgraph $H_r = G_r \setminus G_{r-1}$ together with the  endpoints of edges of 
 $H_r$ is called the \textit{stratum of height $r$}. The \textit{height}\index{height} of subset of $G$ is the minimum $r$ such that the subset is contained in $G_r$.

  Given an $f$-invariant filtration, 
the \textit{transition matrix} $M_r$ of the stratum $H_r$ is the square matrix whose $j-{\text{th}}$ column records the number of times
the image of an edge in $H_r$ under $f$ intersects the other edges $H_r$ (counted in either direction).
	If for each  $i,j$, the $(i,j)-${\text{th}} entry of some power of $M_r$ is nonzero then $M_r$ is said to be irreducible. In this case we say that the associated stratum $H_r$ is also \textit{irreducible}. By the Perron-Frobenius theorem, when $H_r$ is irreducible the matrix $M_r$ has a unique eigenvalue $\lambda \ge 1$, called the \textit{Perron-Frobenius eigenvalue}, for which some associated eigenvector has positive entries.
if $\lambda=1$ then $H_r$ is a \textit{nonexponentially growing (NEG) stratum}  whereas if $\lambda>1$ then we say that $H_r$ is an \textit{exponentially growing (EG) stratum}.

 A \textit{topological representative} of an automorphism $\phi\in\out$ is a homotopy equivalence $f:G\rightarrow G$ that takes vertices to vertices and edges to edge-paths of a marked graph $G$ with marking $\rho: R_n \rightarrow G$. A nontrivial path $\alpha$  in G is a \emph{periodic Nielsen path} if $f^k_{\#}(\alpha)=\alpha$ for some $k$, where the smallest such $k$ is the \emph{period}. $\alpha$ is a Nielsen path if $k=1$. A
periodic Nielsen path is \emph{indivisible} (iNP) if it cannot be written as a concatenation of 
nontrivial periodic Nielsen paths.

Given a topological representative $f: G\to G$  one can define a new map $Tf$ by setting $Tf(E)$ to be the first edge in the edge path associated to $f(E)$. For a \emph{turn}, which is a pair $\{E_i, E_j\}$ of edges with the same initial vertex, we let $Tf(E_i,E_j) = (Tf(E_i),Tf(E_j))$. Hence, $Tf$ is a map that takes turns to turns. We say that a
non-degenerate (i.e, $i\neq j$) turn is \emph{illegal} if for some iterate of $Tf$ the turn becomes degenerate; otherwise the
 turn is legal. A path is said to be a \emph{legal path} if it contains only legal turns and it is $r-legal$ if it is of height $r$ and all of its illegal turns are in $G_{r-1}$.
% \item A nontrivial path $\gamma$ in $G$
%is a \textit{periodic Nielsen path} if there exists a $k$ such that $f^k_\#(\gamma)=\gamma$;
%the minimal such $k$ is called the period and if $k=1$, we call such a path \textit{Nielsen path}. A periodic Nielsen path is \textit{indivisible} if it cannot be written as a concatenation of two or more nontrivial periodic Nielsen paths.

 \textbf{Relative train track map.} 
 Given $\phi\in \out$ and a topological representative $f:G\rightarrow G$ with a filtration $G_0\subset G_1\subset \cdot\cdot\cdot\subset G_k$ which is preserved by $f$,
 we say that $f$ is a relative train track map if the following conditions are satisfied: \label{rtt}
 \begin{enumerate}
  \item $f$ maps $r$-legal paths to  $r$-legal paths.
  \item If $\gamma$ is a path in $G_{r-1}$, with endpoints in $G_{r-1}\cap H_r$   then $f_\#(\gamma)$ is non-trivial.
  
  %\item If $\gamma$ is a path in $G$ with its endpoints in $H_r$ then $f_\#(\gamma)$ has its end points in $H_r$.
  \item If $E$ is an edge in $H_r$ then $Tf(E)$ is an edge in $H_r$. In particular,
  every turn consisting of a direction of height $r$ and one of height $< r$  is legal.
 \end{enumerate}
 
By \cite[Theorem 5.1.5]{BFH-00}  every  $\phi\in\out$ has a relative train-track map representative which satisfies some useful conditions in addition to the ones we listed above. 

\subsection{Weak topology}
\label{sec:3}
Given a graph $G$, there is an equivalence relation on the set of all paths, rays, lines, circuits in $G$. Namely, two of them are equivalent if they differ by a homeomorphism of their domains. 
 Let $\widehat{\mathcal{B}}(G)$ denote the compact space of equivalence classes of circuits and paths (finite paths, rays and lines) in $G$ whose endpoints (if any) are vertices of $G$.  
For each finite path $\gamma$ in $G$, set of all paths and circuits in $\widehat{\mathcal{B}}(G)$ which have $\gamma$ as its subpath is denoted by $\widehat{N}(G,\gamma)$ and the collection of all such sets gives a basis for a topology on $\widehat{\mathcal{B}}(G)$ 
 called \textit{weak topology}. Let $\mathcal{B}(G)\subset \widehat{\mathcal{B}}(G)$ be the compact subspace of all lines in $G$ with the induced topology. 

 Up to reversing the direction, two distinct points in $\partial \mathbb{F}$ determine a \emph{line} completely since there is only one line that joins these two points. Let $\widetilde{\mathcal{B}}=\{ \partial \mathbb{F} \times \partial \mathbb{F} - \vartriangle \}/(\mathbb{Z}/2 \mathbb Z)$ be the set of pairs of boundary points of $\F$ where $\vartriangle$ is the diagonal and $\mathbb{Z}/2 \mathbb Z$ acts by interchanging factors. We can give the weak topology to
$\widetilde{\mathcal{B}}$, induced by the Cantor topology on $\partial \mathbb{F}$.

$\mathbb{F}$ acts on $\widetilde{\mathcal{B}}$ with a compact but non-Hausdorff quotient space $\mathcal{B}=\widetilde{\mathcal{B}}/\mathbb{F}$, hence the topology is called  weak. The quotient topology is also called the \textit{weak topology}. For any marked graph $G$, there is a natural identification $\mathcal{B}\approx \mathcal{B}(G)$.

   Elements of $\mathcal{B}$ are also called \textit{lines}. A lift of a line $\gamma \in \mathcal{B}$ is an element  $\widetilde{\gamma}\in \widetilde{\mathcal{B}}$ that projects to $\gamma$ under the quotient map and the two elements of $\widetilde{\gamma}$ are called its endpoints.

For any $\pi_1$-injective map $f: G\to G'$ we can define a natural induced map $f_\# : \widehat{\mathcal{B}}(G)\to \widehat{\mathcal{B}}(G')$. If $\gamma$ is a finite path or a circuit then $f_\#(\gamma)$ is the unique path in $G'$ that is homotopic relative to the endpoints to  (or, tightened to) the $f$-image of $\gamma$. For a line $\gamma$, we pass via the universal covers of $G, G'$ and use Gromov boundaries to define the element $f_\#(\gamma)$ as the projection of the unique element of $\widehat{\mathcal{B}}(G')$ with same endpoints as $f(\gamma)$ which is also a continuous map.

When $f$ is a homotopy equivalence, $f_\#$ is a homeomorphism.  If $f$ is a topological representative, then this homeomorphism is the identity map.

  A line (or a path) $\gamma$ is said to be \textit{weakly attracted} to a line (path) $\beta$ under the action of $\phi\in\out$, if  for some  $k$, $\phi^k(\gamma)$ converges to $\beta$ in the weak topology, in  other words, if any given finite subpath of $\beta$ is contained in $\phi^k(\gamma)$ for some  $k$. Similarly if we have a homotopy equivalence $f:G\rightarrow G$,  a line(path) $\gamma$ is said to be \textit{weakly attracted} to a line(path) $\beta$ under the action of $f_{\#}$ if the $f_{\#}^k(\gamma)$ weakly converges to $\beta$.

The set of lines  $l\in \mathcal{B}(G)$ that are elements of the weak closure of a ray $\gamma$ in $G$ is called the \textit{accumulation set} of  $\gamma$. This is the set of lines $l$ such that every finite subpath of $l$
occurs infinitely many times as a subpath $\gamma$. Similarly, the weak accumulation set of some point  $\xi\in\partial\mathbb{F}$ is the set of lines in the weak closure of any of the asymptotic rays in its equivalence class.	

\subsection{Free factor systems and malnormal subgroup systems.}
A  subgroup $K<\F$ is \emph{malnormal} if $xK_sx^{-1}\cap K$ is trivial for all $x\in \F-K$. A finite collection
$\mathcal{K} = \{[K_1], [K_2], .... ,[K_s]\}$ of conjugacy classes of nontrivial finite rank subgroups $K_s<\F$ is called a \textit{subgroup system}. Define a subgroup system to be \textit{malnormal} if each of its subgroups is malnormal and  for all $[K_s],[K_t]\in \mathcal{K}$ if some representatives of $ [K_s]$ and $[K_t]$ intersect non-trivially then $s=t$.
Given two malnormal subgroup systems $\mathcal{K}, \mathcal{K}'$ we give a partial ordering  by defining $\mathcal{K}\sqsubset \mathcal{K}'$ if for each conjugacy class of subgroup $[K]\in \mathcal{K}$ there exists some conjugacy class of subgroup $[K']\in \mathcal{K}'$ such that $K < K'$.
	
 Given a finite collection $\{K_1, K_2,.....,K_s\}$ of subgroups of $\F$ , we say that this collection determines a \textit{free factorization} of $\F$ if $\F$ is the free product of these subgroups, that is,
$\F = K_1 * K_2 * .....* K_s$.
A \emph{free factor system} is a finite collection of conjugacy classes of subgroups $\mathcal{F}:=\{[F_1], [F_2],.... [F_p]\}$ of $\F$ such that there is a free factorization of $\F$ of the form
$\F = F_1 * F_2 * ....*F_p* B$, where $B$ is some (possibly trivial) finite rank subgroup of $\F$ . Every free factor system is a malnormal subgroup system.

 A malnormal subgroup system $\mathcal{K}$ \textit{carries a conjugacy class} $[c]\in \F$ if there exists some $[K]\in\mathcal{K}$ such that $c\in K$. Also, we say that $\mathcal{K}$ carries a line $\gamma$ if one of the following equivalent conditions hold:
	\begin{enumerate}
 		\item $\gamma$ is the weak limit of a sequence of conjugacy classes carried by $\mathcal{K}$.
 		\item There exists some $[K]\in \mathcal{K}$ and a lift $\widetilde{\gamma}$ of $\gamma$ so that the endpoints of 		$\widetilde{\gamma}$ are in $\partial K$.
	\end{enumerate}
	 \begin{lemma}\cite[Fact 1.8]{HM-20}
	 For each subgroup system $\mathcal{K}$ the set of lines / circuits carried by $\mathcal{K}$ is a closed set in the weak topology.
\end{lemma}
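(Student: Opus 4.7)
The plan is to use the Stallings core of the cover of $G$ associated to each conjugacy class in $\mathcal{K}$, combined with a diagonal compactness argument in the resulting finite graph.

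First, since $\mathcal{K}$ is finite and the set of lines and circuits carried by $\mathcal{K}$ is the union over $s$ of the corresponding set for each individual class $[K_s]$, and a finite union of closed sets is closed, it suffices to prove the statement for a single conjugacy class $[K]$. I would fix a marked graph $G$ and let $p\co C\to G$ denote the immersion into $G$ of the Stallings core $C$ of the $K$-cover $\widetilde{G}/K$; since $K$ has finite rank, $C$ is a finite graph. A line (respectively, a circuit) $\gamma$ in $G$ is carried by $[K]$ if and only if it admits a lift, i.e.\ there is a line (circuit) $\beta$ in $C$ with $p_\#(\beta)=\gamma$. This reduces the problem to a lifting question on the finite graph $C$.

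The key claim I would establish is the following lifting criterion: $\gamma$ admits a lift to $C$ if and only if every finite subpath of $\gamma$ admits a lift to $C$. The forward direction is immediate. For the reverse direction, given $\gamma=\cdots e_{-1}e_0 e_1\cdots$, I would choose for each $n$ a lift $\beta_n$ of the finite subpath $e_{-n}\cdots e_n$ to $C$. Pigeonholing over the finitely many vertices of $C$, after passing to a subsequence I may assume the vertex of $\beta_n$ corresponding to the position between $e_{-1}$ and $e_0$ is the same vertex $v\in C$ for every $n$. Since for each $m$ only finitely many edge-paths in $C$ of length $2m+1$ pass through $v$ in the middle, a standard diagonal extraction produces a further subsequence along which the $\beta_n$ stabilize on longer and longer central windows. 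Their common limit is a bi-infinite line in $C$ projecting under $p_\#$ to $\gamma$. The circuit case is handled analogously: circuits of bounded length in the finite graph $C$ form a finite set, so compactness is automatic.

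With the lifting criterion in hand, closedness will follow by an open-set argument. Suppose $\gamma$ is not carried by $[K]$. By the criterion there exists a finite subpath $\sigma$ of $\gamma$ which does not lift to $C$. Any element of the basic weak neighborhood $\widehat{N}(G,\sigma)$ contains $\sigma$ as a subpath, so if such an element were carried by $[K]$ then $\sigma$ would also lift to $C$, contradicting the choice of $\sigma$. Hence $\widehat{N}(G,\sigma)$ is disjoint from the carried set, so the complement of the set of lines/circuits carried by $[K]$ is open. The main obstacle is the diagonal compactness step that upgrades ``every finite subpath lifts'' to ``the full line lifts''; the crux is to fix a common basepoint in $C$ before extracting the diagonal. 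The finite rank of each $K_s$, which makes $C$ finite, is essential, while malnormality of $\mathcal{K}$ plays no role in this particular closedness fact.
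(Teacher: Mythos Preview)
The paper does not supply its own proof of this lemma; it is quoted verbatim from \cite[Fact~1.8]{HM-20}, so there is no paper-proof to compare against. Your argument for the \emph{lines} case is correct and is the standard one (and essentially the one in \cite{HM-20}): pass to the finite Stallings core $C$, observe that being carried is equivalent to lifting to $C$, and use pigeonhole on the finitely many vertices of $C$ to upgrade ``every finite subpath lifts'' to ``the whole line lifts''. One small simplification: once the midpoint $v$ is fixed by pigeonhole, lifts of paths through $v$ are \emph{unique} because $p\co C\to G$ is an immersion, so the lifts of the nested segments $e_{-n}\cdots e_n$ are automatically nested and no further diagonal extraction is needed.

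Your treatment of circuits, however, is a genuine gap. The sentence ``circuits of bounded length in the finite graph $C$ form a finite set, so compactness is automatic'' is not an argument for closedness in $\widehat{\mathcal{B}}(G)$. In fact the statement, read literally for circuits in $\widehat{\mathcal{B}}(G)$, is false: take $\F=\langle a,b\rangle$ and $K=\langle a^2\rangle$. The circuit $a$ is not carried by $\{[K]\}$, yet every basic weak neighborhood $\widehat{N}(G,a^k)$ of it contains the carried circuit $a^{2k}$, so $a$ lies in the closure of the carried set. The original \cite[Fact~1.8]{HM-20} is stated only for lines in $\mathcal{B}$, and that is how the result is actually used later in this paper (e.g.\ in the proof of Theorem~\ref{modwat} one passes to weak limits that are lines). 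So your lines argument already delivers the full content needed; the ``circuits'' clause should be dropped or read as ``circuits identified with their periodic lines''.
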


 For any marked graph $G$ and any subgraph $H \subset G$, the fundamental groups of the noncontractible components of $H$ form a free factor system, denoted by $[\pi_1(H)]$. Every free factor system $\mathcal{F}$ can be realized as $[\pi_1(H)]$ for some nontrivial core subgraph H of some marked graph $G$. An equivalent way of saying that a line or circuit  $\gamma$ is carried by $\mathcal{F}$ is that for some marked graph $G$ and a subgraph $H \subset G $ with $[\pi_1(H)]=\mathcal{F}$, the realization of $\gamma$ in $G$ is contained in $H$.

 Given two free-factor systems there is a natural operation one can perform  called `` meet''. It is an extension of the notion of ``intersection '' of subgroups, extended to free factor systems.
	\begin{lemma}
 [\cite{BFH-00}, Section 2.6] Every collection $\{\mathcal{F}_i\}$ of free factor systems has a well-defined meet $\wedge\{\mathcal{F}_i\} $, which is the unique maximal free factor system $\mathcal{F}$ such that $\mathcal{F}\sqsubset \mathcal{F}_i$ for all $i$. Moreover,
 for any free factor $F< \F$ we have $[F]\in \wedge\{\mathcal{F}_i\}$ if and only if there exists an indexed collection of subgroups $\{A_i\}_{i\in I}$ such that $[A_i]\in \mathcal{F}_i$ for each $i$ and $F=\bigcap_{i\in I} A_i$.
\end{lemma}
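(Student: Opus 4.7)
The plan is to reduce to the case of two free factor systems, construct the binary meet via intersections of representatives, and then verify maximality directly.

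\textbf{Reduction.} Meets, if they exist, are associative, so it suffices to construct $\mathcal{F}\wedge\mathcal{F}'$ for two free factor systems $\mathcal{F},\mathcal{F}'$. Moreover, since ranks of free factors are bounded by $\text{rank}(\mathbb{F})$, any strictly $\sqsubset$-decreasing chain of free factor systems terminates, so a general (possibly infinite) collection reduces to a finite subcollection, and finite iteration of binary meets suffices.

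\textbf{Binary meet: the subgroups.} Given $\mathcal{F}=\{[A_1],\dots,[A_p]\}$ and $\mathcal{F}'=\{[B_1],\dots,[B_q]\}$, the first step is to show that for any representatives $A,B$, the intersection $A\cap B$ is again a free factor of $\mathbb{F}$. I would deduce this from the action of $\mathbb{F}$ on the Bass--Serre trees associated to the free splittings realizing $\mathcal{F}$ and $\mathcal{F}'$: each of $A$ and $B$ fixes a vertex in its respective tree, so $A\cap B$ fixes a vertex of the diagonal in the product tree and acts freely on the remaining edges, which exhibits $A\cap B$ as a free factor.

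\textbf{Binary meet: finiteness and realization.} The technical core is to show that only finitely many nontrivial conjugacy classes $[A_i\cap xB_jx^{-1}]$ arise as $(i,j,x)$ varies. Realize $\mathcal{F}$ and $\mathcal{F}'$ as $[\pi_1(H)]$ and $[\pi_1(H')]$ on marked graphs by the remark preceding the lemma, and by passing to a common marked graph via subdivision and homotopy equivalence, I may take $H,H'\subset G$. The Stallings fiber product $H\times_G H'$ has finitely many connected components, and after collapsing contractible components the nontrivial fundamental groups of the remaining components enumerate, up to conjugacy, exactly the nontrivial intersections $A_i\cap xB_jx^{-1}$. Collecting these conjugacy classes yields a finite subgroup system $\mathcal{F}\wedge\mathcal{F}'$, which is a free factor system because it is realized as $[\pi_1(K)]$ for the core subgraph $K$ of a marked graph obtained from the pullback.

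\textbf{Maximality and characterization.} Finally, if $\mathcal{H}$ is any free factor system with $\mathcal{H}\sqsubset\mathcal{F}_i$ for all $i$, then for every $[H]\in\mathcal{H}$ and every $i$ there exists $[A_i]\in\mathcal{F}_i$ with (some conjugate of) $H$ sitting inside $A_i$. By choosing these $A_i$ compatibly one obtains $H\leq\bigcap_i A_i$, which is a member of the meet constructed above, so $\mathcal{H}\sqsubset\wedge\{\mathcal{F}_i\}$. This simultaneously proves maximality, uniqueness, and the ``if and only if'' characterization via indexed intersections. The main obstacle I expect is Step 3 (finiteness of the conjugacy classes of intersections and the identification with a core subgraph), which requires a careful Stallings pullback argument; the other steps are either formal or follow from well-established Bass--Serre-theoretic facts.
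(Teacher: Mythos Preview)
The paper does not supply its own proof of this lemma; it is quoted verbatim as a result of \cite{BFH-00}, Section 2.6, and no argument is given here. So there is nothing in this paper to compare your proposal against directly. Your outline is, in broad strokes, the standard one and is close in spirit to what Bestvina--Feighn--Handel do.

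Two points in your sketch deserve tightening. First, in your ``Binary meet: the subgroups'' step, the phrase ``fixes a vertex of the diagonal in the product tree'' is not a correct argument as written: the product of two trees is not a tree, and there is no diagonal action that immediately yields a free splitting. The clean way is to use one tree at a time: if $A$ is a free factor of $\mathbb{F}$, then any subgroup $B$ acts on the Bass--Serre tree of the splitting $\mathbb{F}=A*A'$, and the vertex stabilizer $A\cap B$ is a free factor of $B$; when $B$ is itself a free factor of $\mathbb{F}$, transitivity gives that $A\cap B$ is a free factor of $\mathbb{F}$.

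Second, in your ``finiteness and realization'' step you write that after subdivision and homotopy equivalence you may take $H,H'\subset G$ as subgraphs of a common marked graph. This is the genuine gap: two arbitrary free factor systems cannot in general be realized simultaneously as embedded core subgraphs of a single marked graph. The Stallings fiber product argument you allude to is the right idea, but it must be run with $H$ and $H'$ realized in \emph{different} marked graphs (or as immersions into a common one), not as embedded subgraphs of the same $G$. Once you drop the embedding claim and work with the pullback of two markings, the finiteness and free-factor conclusions go through as in \cite{BFH-00}.
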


The \textit{free factor support}  $\mathcal{F}_{supp}(B)$ of a set of lines $B$ in $\mathcal{B}$ is defined as the meet of all free factor systems that carries $B$ (\cite{BFH-00}). If $B$ is a single line then $\mathcal{F}_{supp}(B)$ is
single free factor. We say that a set of lines, $B$, is \textit{filling} if $\mathcal{F}_{supp}(B)=[\F]$

  Given a malnormal subgroup system $\mathcal{K}$ and a sequence of lines / circuits $\{\gamma_n\}$, if every weak limit of every subsequence of $\{\gamma_n\}$ is carried by $\mathcal{K}$, then $\gamma_n$ is carried by $\mathcal{K}$ for all sufficiently large $n$ (\cite[Lemma 1.11]{HM-20}) .

%\label{sec:5}

\subsection{Attracting laminations and nonattracting subgroup systems:}
\label{sec:5}

	For any marked graph $G$, the natural identification $\mathcal{B}\approx \mathcal{B}(G)$ induces a bijection between the closed subsets of $\mathcal{B}$ and the closed subsets of $\mathcal{B}(G)$. A closed
subset of any of these two sets is called a \textit{lamination}, denoted by $\Lambda$. Given a lamination $\Lambda\subset \mathcal{B}$ we look at the corresponding lamination in $\mathcal{B}(G)$ as the
realization of $\Lambda$ in $G$. An element $\lambda\in \Lambda$ is called a \textit{leaf} of the lamination.
	
 A lamination $\Lambda$ is called an \textit{attracting lamination} for $\phi$ if it is the weak closure of a line $\ell$ (called the \textit{generic leaf of $\Lambda$}) satisfying the following conditions:
\begin{itemize}
 \item $\ell$ is bi-recurrent leaf of $\Lambda$. This means that every finite subpath of $\ell$ occurs infinitely many times as subpath in both directions.
\item $\ell$ has an \textit{attracting neighborhood} $V^+$, in the weak topology, with the property that every line in $V^+$ is weakly attracted to $\ell$. (see \cite[Definition 3.1.1]{BFH-00})
\item no lift $\widetilde{\ell}\in \mathcal{B}$ of $\ell$ is the axis of a generator of a rank 1 free factor of $\F$ .
\end{itemize}
	 Attracting neighborhoods of $\Lambda$ are defined by choosing sufficiently long segments of generic leaves of $\Lambda$. Note that if $V^+$ is an attracting neighborhood, then $\phi(V^+)\subset V^+$. 
	
  By \cite{BFH-00},  associated to each $\phi\in \out$ is a finite set $\mathcal{L}^+(\phi)$ of laminations,  called the set of \textit{attracting laminations} of $\phi$. Similarly we define  the set of attracting laminations (or the set of \emph{repelling laminations}) $\mathcal{L}^-(\phi)$ of $\phi^{-1}$ (or of $\phi$). Both $\mathcal{L}^+(\phi)$ and $\mathcal{L}^-(\phi)$ are $\phi$-invariant (\cite[Lemma 3.1.13, Lemma 3.1.6]{BFH-00}). If $f: G\to G$ is a relative train-track map representing $\phi$, then there is a bijection between  the set $\mathcal{L}^+(\phi)$ and the set of exponentially growing strata in $G$ and this bijection is determined by the height of a generic leaf of $\Lambda^+\in\mathcal{L}^+(\phi)$ (\cite[Section 3]{BFH-00}).

 Free factor support of an element of $\mathcal{L}^+(\phi)$ or $\mathcal{L}^-(\phi)$ is defined by a single free factor $\{[F]\}$. An element of $\mathcal{L}^+(\phi)$ and an element of $\mathcal{L}^-(\phi)$ are dual if they have the same free factor support. This relation imposes a bijection between these two sets \cite[Lemma 3.1.2]{BFH-00}. Free factor support of $\Lambda^+\in\mathcal{L}^+(\phi)$ is equal to free factor support of a generic leaf of $\Lambda^+$ (\cite[Corollary 2.6.5, Corollary 3.1.11]{BFH-00}). If $\Lambda_1, \Lambda_2 \in \mathcal{L}^+(\phi)$ then $\Lambda_1 = \Lambda_2 \Leftrightarrow  \mathcal{F}_{supp}(\Lambda_1) = \mathcal{F}_{supp}(\Lambda_2)$ (\cite[Fact 1.14]{HM-20}).

 A line/circuit $\gamma$ is said to be \emph{weakly attracted} to $\Lambda_1\in\mathcal{L}^+(\phi)$ if $\gamma$ is weakly attracted to some (hence every) generic leaf of $\Lambda$ under the action of $\phi$. No leaf of any element of $\mathcal{L}^-(\phi)$ is ever attracted to any leaf of any element of $\mathcal{L}^+(\phi)$. No Nielsen path is weakly attracted to either an element of $\mathcal{L}^+(\phi)$ or an element of $\mathcal{L}^-(\phi)$.

 If $\Lambda^+$ is an attracting lamination and $\Lambda^-$ is a repelling lamination of $\phi$, then they cannot have leaves which are asymptotic, otherwise it would violate the fact that no leaf of $\Lambda^-$ is weakly attracted to $\Lambda^+$. This allows us to choose sufficiently long subpaths of generic leaves of $\Lambda^+, \Lambda^-$ and construct attracting and repelling neighborhoods $V^+, V^-$ so that $V^+\cap V^-=\emptyset$.
\begin{itemize}	
\item An element of $\Lambda\in \mathcal{L}^+(\phi)$ is said to be \emph{topmost} if there does not exists any $\Lambda_j\in\mathcal{L}^+(\phi)$ such that $\Lambda \subset \Lambda_j$. We say $\Lambda$ is \emph{bottommost} if there does not exist any $\Lambda_j\in\mathcal{L}^+(\phi)$ such that $\Lambda_j\subset \Lambda$. If $\Lambda$ is topmost or bottommost, then its dual also has the same property by Lemma 2.16 in \cite{HM-20}.
	\item Elements of $\mathcal{L}^+(\phi)$ can be divided into two distinct classes : \emph{geometric} laminations and \emph{nongeometric} laminations. An element $\Lambda^+\in \mathcal{L}^+(\phi)$ is said to be \emph{geometric} if there exists a finite, $\phi$-invariant collection of  distinct, nontrivial  conjugacy classes $\mathcal{C} = \{[c_1], [c_2], \ldots [c_s]\}$ such that $\mathcal{F}_{supp}(\mathcal{C}) = \mathcal{F}_{supp}(\Lambda^+)$ (\cite[Definition 2.19]{HM-20}). It is said to be \emph{nongeometric} otherwise.
\end{itemize}

\subsection{Nonattracting subgroup system}
\label{sec:6}

The \emph{nonattracting subgroup system} of an attracting lamination was first introduced by Bestvina-Feighn-Handel in \cite{BFH-00} and later explored in great detail by Handel-Mosher in \cite[Part III, pp-192-202]{HM-20}. Nonattracting subgroup system records the information about the lines and circuits which are not attracted to the lamination.  We will list some of the properties which are central to our proofs.
\begin{lemma}(\cite{HM-20}- Lemma 1.5, 1.6)
\label{NAS} Let $\phi\in\out$ and $\Lambda^+\in \mathcal{L}^+(\phi)$ be an attracting lamination such that $\phi(\Lambda^+) = \Lambda^+$. Then there exists a subgroup system $\mathcal{A}_{na}(\Lambda^+)$ such that:
 \begin{enumerate}
  \item $\mathcal{A}_{na}(\Lambda^+)$ is a malnormal subgroup system and the set of lines carried by $\mathcal{A}_{na}(\Lambda^+)$ is closed in the weak topology.
  \item A conjugacy class $[c]$ is not attracted to $\Lambda^+$ if and only if it is carried by $\mathcal{A}_{na}(\Lambda^+)$.
  %\item $\mathcal{A}_{na}(\Lambda^+_\phi)$ does not depend on the choice of the CT representing $\phi$.
  \item  If $\Lambda^-$ and  $\Lambda^+$ are dual to each other, we have $\mathcal{A}_{na}(\Lambda^+)= \mathcal{A}_{na}(\Lambda^-)$.
  \item $\mathcal{A}_{na}(\Lambda^+)$ is a free factor system if and only if $\Lambda^+$ is not geometric.
  \item If $\{\gamma_n\}_{n\in\mathbb{N}}$ is a sequence of lines such that every weak limit of every subsequence of $\{\gamma_n\}$ is carried by $\mathcal{A}_{na}(\Lambda^+)$ then for all sufficiently large $n$, $\{\gamma_n\}$ is carried by $\mathcal{A}_{na}(\Lambda^+)$.  
  \item If each conjugacy class of  a finite rank subgroup $B < \F$ is not weakly attracted to $\Lambda^+_\phi$, then there exists some
  $A < \F$ such that $B< A$ and $[A]\in \mathcal{A}_{na}(\Lambda^+_\phi)$.
 \end{enumerate}
\end{lemma}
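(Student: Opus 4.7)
My plan is to construct $\mathcal{A}_{na}(\Lambda^+)$ explicitly from a relative train track representative of $\phi$ and then verify the six properties using the weak topology and Perron--Frobenius dynamics on the exponentially growing stratum associated to $\Lambda^+$. First I would fix a relative train track map $f\colon G\to G$ representing $\phi$ with $\Lambda^+$ corresponding to an EG stratum $H_r$, as provided by the bijection recalled in the preceding subsection. A line fails to be weakly attracted to $\Lambda^+$ precisely when its iterates do not pick up arbitrarily long generic-leaf segments; by the $r$-legality machinery and the Perron--Frobenius stretch on $H_r$, this forces any $H_r$-excursion of such a line to lie in a bounded-length set of height-$r$ indivisible Nielsen paths. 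The natural candidate for $\mathcal{A}_{na}(\Lambda^+)$ is therefore the subgroup system obtained from the noncontractible components of $G_{r-1}$, enlarged in the geometric case by identifications along the (finitely many) endpoints of height-$r$ iNPs.

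With this candidate in hand, property (2) is the heart of the matter. The ``only if'' direction is an induction on height: if $[c]$ were not carried, then its circuit would cross $H_r$ in some way that is not a concatenation of Nielsen subpaths, and the relative train track axioms would force a long $r$-legal subpath after enough iterations, which by the standard weak-attraction argument would converge to a generic leaf of $\Lambda^+$. The ``if'' direction is immediate because the realization of any carried line stays inside the candidate subgraph under $f_\#$ and so cannot accumulate on generic leaves of $\Lambda^+$, whose leaves fill $H_r$. Closedness of the set of carried lines in the weak topology (the second half of (1)) and property (5) then follow from compactness of $\mathcal{B}$ together with the characterization in (2), which lets one pass from circuits to their weak limits. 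Malnormality reduces to a combinatorial check: distinct noncontractible components of $G_{r-1}$ are disjoint in $G$, and the iNP closure respects malnormality because the attaching data at the endpoints of the height-$r$ iNPs is rigid.

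Property (3) is essentially symmetric: the dual lamination $\Lambda^-$ corresponds, via the CT representing $\phi^{-1}$, to the same combinatorial stratum data, so the construction yields the same subgroup system; property (6) is a maximality argument that applies (5) to the conjugacy classes in $B$ and then uses that $[B]$ must sit inside a single element of $\mathcal{A}_{na}(\Lambda^+)$ by malnormality together with the ``carry'' criterion for lines. The main technical obstacle I expect is property (4), the geometric versus nongeometric dichotomy: in the nongeometric case one must verify that the iNP closure actually produces a free factor system, which ultimately requires the Bestvina--Feighn--Handel surface model to detect when the boundary attachments create a non-free-factor subgroup, and to see that this happens exactly when a $\phi$-invariant family of conjugacy classes fills the lamination. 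Most of the work concentrates in that step; the remaining properties are essentially formal once (2) and the construction are in place.
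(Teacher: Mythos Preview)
The paper does not prove this lemma at all: it is stated with attribution to \cite{HM-20}, Lemmas~1.5 and~1.6, and no argument is given in the present paper. There is therefore nothing here to compare your proposal against; the authors are simply importing the result from Handel--Mosher.

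That said, since you have attempted a sketch of the underlying Handel--Mosher argument, a couple of your reductions are too quick. Your claim for (3), that duality is ``essentially symmetric'' because the CT for $\phi^{-1}$ has ``the same combinatorial stratum data,'' is not justified: the CT maps for $\phi$ and $\phi^{-1}$ live on different marked graphs with different filtrations, and the equality $\mathcal{A}_{na}(\Lambda^+)=\mathcal{A}_{na}(\Lambda^-)$ is one of the genuinely nontrivial parts of the Handel--Mosher theory, not a symmetry observation. Likewise, (5) is not a consequence of (2) and compactness of $\mathcal{B}$ in the way you suggest; it is a general fact about malnormal subgroup systems (this is \cite[Lemma~1.11]{HM-20}, which the present paper invokes separately), and the proof uses the Stallings-graph realization of $\mathcal{K}$ rather than anything specific to $\mathcal{A}_{na}$. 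Finally, your route to (6) via (5) and malnormality is incomplete: (5) concerns sequences of lines, not an arbitrary finite-rank subgroup $B$, and one needs the explicit graph-of-groups description of $\mathcal{A}_{na}$ to see that $B$ lands in a single component. These are real gaps relative to the cited source, though irrelevant to this paper, which treats the lemma as a black box.
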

A fully irreducible automorphism which is induced by a pseudo-Anosov homeomorphism on a surface with non-empty boundary is called a \emph{geometric} fully irreducible automorphism.  Geometricity was relativized by Bestvina-Feighn-Handel \cite{BFH-00} by extending it to a property of an EG stratum  of a relative train track map.
The definition of geometricity is expressed in terms of the existence of
a ``geometric model"(See \cite{HM-20} for details of constructions of geometric models.)

When $\Lambda^+$ is geometric, the geometric model gives us the structure of the nonattracting subgroup system. Suppose $\Lambda^+$ has height $r$ and let $H_r$ be the EG strata associated to $\Lambda^+$. To be more precise, by the work done in the geometric model, we have that:
		\begin{enumerate}
			\item There is a unique closed indivisible Nielsen path $\rho_r$ of height $r$. (\cite[Fact 1.42]{HM-20})
			\item There is a compact, connected hyperbolic surface with totally geodesic boundary components $\partial_0 S, \partial_1 S, \ldots, \partial_m S$ and the conjugacy class determined by $\partial_0 S$ is represented by the circuit $\rho_r$ (\cite[Lemma 2.5, pp-74]{HM-20}).
			\item The conjugacy classes determined by $\partial_1 S, \ldots, \partial_m S$ are all carried by $[\pi_1 G_{r-1}]$ (\cite[Definition 2.1, Lemma 2.5]{HM-20}).
                \item The circuit in $G$ representing the conjugacy class corresponding to $\partial_0 S$ is $\rho_r$, and the free-factor support of $\rho_r$ is invariant under $\phi$ (\cite[Lemma 2.5]{HM-20}).
                \item $[\pi_1 G_{r-1}]$ is a free-factor system carried by the nonattracting subgroup system $\mathcal{A}_{na}(\Lambda^+)$.
		\end{enumerate}		
	 The facts listed above show that when $\Lambda^+$ is geometric, $\mathcal{A}_{na}(\Lambda^+) = \mathcal{F} \cup\{[F_m]\}$, where $\mathcal{F}$ is some free-factor system such that $[\pi_1 G_{r-1}] \sqsubset\mathcal{F}$. Moreover,  $\phi$ restricted to $F_m$ has polynomial growth and if $[c]$ is the conjugacy class determined by $\rho_r$, then $[c]$ is carried by $[F_m]$. Note that $[F_m]$ can never be the conjugacy class of a free factor. In case the associated strata is a top strata, $\mathcal{A}_{na}(\Lambda^+) = \mathcal{F} \cup\{[\langle c \rangle]\}$.

\subsection{Completely split improved relative train track (CT) maps and rotationless outer automorphisms}\label{sec:CT}
 Let  $f:G\rightarrow G$ be a relative train track map representing $\phi\in\out$. A \textit{splitting} of a line, path or a circuit $\alpha$ is a concatenation $ \ldots\alpha_0\alpha_1 \ldots\alpha_k\ldots $ of subpaths of $\alpha$ in $G$ 
 such that for all $i\geq 1$, we have  $f^i_\#(\alpha) =  \ldots f^i_\#(\alpha_0)f^i_\#(\alpha_1)\ldots f^i_\#(\alpha_k)\ldots$. 
 The subpath $\alpha_i'$s in a splitting are called \emph{terms} or \emph{components} of the splitting of $\alpha$. The notation $\alpha\cdot\beta$ will denote a splitting and $\alpha\beta$ will denote a concatenation of nontrivial paths $\alpha, \beta$.

%Given an NEG edge $E$ and a root free closed Nielsen path $\sigma$ such $f_\#(E) = E\cdot\sigma^{p}$, for some $p\neq 0$, we say that $E$ is a \emph{Linear NEG edge}.  Given two linear NEG edges $E_1, E_2$  and  $\sigma$ such that $f_\#(E_i) = E_i\cdot\sigma^{p_i}$ then  we say that
 %$E_1,E_2$ are in the \textit{same linear family}. If $E_1, E_2$ are in the same linear family, then any path of the form $E_1\sigma^k\overline{E}_2$ for some integer $k\neq 0$ is called an \textit{exceptional path}. We shall call any non-fixed, non-linear NEG edge a \textit{superlinear NEG edge}.

 \textbf{Complete splittings:}  A splitting of a path or circuit $\alpha = \alpha_1\cdot\alpha_2\cdots\cdot \alpha_k$ is called  \textit{complete splitting} if for each component $\alpha_i$ one of the following is satisfied:
 \begin{itemize}
  \item $\alpha_i$ is an edge in some irreducible stratum.
  \item $\alpha_i$ is an indivisible Nielsen path.
  \item $\alpha_i$ is an exceptional path (\emph{i.e.} a path of the form $E_1 w^n E_2^{-1}$, where $w$ is a circuit which is not a power of some other circuit and $w$ is fixed by $f$; $f(E_1)= E_1 w^{d_1}, f(E_2)= E_2 w^{d_2}$ where $d_1 \neq d_2$ have the same sign).
  \item $\alpha_i$ is a maximal subpath of $\alpha$ in a zero stratum $H_i$ and $\alpha_i$ is taken (see \cite[Definition 4.4]{FH-11}).
 \end{itemize}

 \textbf{CT maps:} Completely split improved relative train track (CT) maps are topological representatives with particularly nice properties. CT maps are guaranteed to exist for \textit{rotationless} (see Definition 3.13 \cite{FH-11}) outer automorphisms, as has been shown in the following Theorem from \cite{FH-11}(Theorem 4.28).\\ 

Let $\phi\in\out$ and $f:G\to G$ be some topological representative. Consider a nested sequence of $\phi-$invariant free factor systems $\mathcal{F}_1 \sqsubset \mathcal{F}_2 \sqsubset \ldots \mathcal{F}_n $. We say that $f$ realizes this nested sequence of free systems if there exists a filtration $G_1\subset G_2 \subset \ldots G_p = G$ of subgraphs of $G$ such that for each $\mathcal{F}_i$ there exists $G_{p_i}$ so that $G_{p_i}$ is a core subgraph and  $[\pi_1(G_{p_i})] = \mathcal{F}_{i}$, where $1\leq i \leq n$ and $1\leq p_i \leq p$.
 \begin{lemma}
  For each rotationless $\phi\in \out$ and each nested sequence $\mathcal{F}_1 \sqsubset \mathcal{F}_2 \sqsubset \ldots \mathcal{F}_n $ of $\phi$-invariant free factor systems, there exists a CT map $f:G\rightarrow G$ that is a topological
  representative for $\phi$ that realizes $\calF$. 
 \end{lemma}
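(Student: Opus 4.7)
The plan is to follow the strategy of Feighn--Handel in \cite{FH-11} and obtain the desired CT map by successively improving a relative train track representative through a finite sequence of elementary moves that preserve the homotopy class while strengthening structural properties. First I would invoke \cite[Theorem 5.1.5]{BFH-00} to produce an improved relative train track map $f_0:G_0\to G_0$ representing $\phi$. One may further arrange, by refining the invariant filtration if necessary (inserting filtration elements whose fundamental groups realize each $\mathcal{F}_i$), that the filtration of $G_0$ realizes the given increasing sequence $\mathcal{F}$; this uses that each $\mathcal{F}_i$ is $\phi$-invariant so that a core subgraph with $[\pi_1(\cdot)] = \mathcal{F}_i$ can be made $f$-invariant after a homotopy.

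Next, I would run the list of CT axioms (from \cite[Definition 4.7]{FH-11}) and verify them one at a time, applying standard moves when an axiom fails. Among the moves used are: subdividing edges at the endpoints of indivisible Nielsen paths to make Nielsen paths into concatenations of full edges; sliding valence-one or valence-two vertices in NEG strata to rewrite each NEG edge as an edge whose image has the form $\text{edge}\cdot(\text{path in lower strata})$ giving the linear form; collapsing inessential connecting paths in zero strata so that remaining zero strata are \emph{contractible components of contractible subgraphs} enveloped by EG strata; and performing folds within EG strata to eliminate illegal turns taken by iterates, thereby ensuring that edges in EG strata have completely split images. Each move either terminates a complexity measure (e.g. the number of illegal turns, the length of iterates of edges, or a Kolchin-type lexicographic quantity) or preserves the structure already achieved.

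Rotationlessness enters crucially at several points: it guarantees that every periodic direction is fixed, that every principal vertex is fixed, and that each periodic Nielsen path is an honest Nielsen path rather than a properly periodic one. Without rotationlessness, some axioms (in particular the ones requiring fixed directions at vertices and fixed indivisible Nielsen paths) cannot be arranged, but after passing to a CT one recovers the complete splitting of iterates $f^k_\#(E)$ of every edge $E$ in an irreducible stratum.

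The main obstacle is the simultaneous maintenance of all CT axioms: improving one property by a move can break another previously established property. The resolution, carried out in \cite{FH-11}, is to order the moves by a well-chosen lexicographic complexity so that later moves cannot destroy the earlier ones, and to show that the process terminates in finitely many steps. Once termination is established, the resulting $f:G\to G$ is a CT representing $\phi$, and because the refinement step was done first and subsequent moves preserve the filtration (up to subdivisions and core-subgraph equivalences), $f$ still realizes $\mathcal{F}$. The detailed verifications of each move and each axiom would be cited rather than redone, as the entire construction is carried out in \cite[Sections 4.2--4.3]{FH-11}.
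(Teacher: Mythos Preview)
The paper does not prove this lemma at all; it is stated as a direct citation of \cite[Theorem~4.28]{FH-11} and treated as a black box. Your proposal is a reasonable high-level outline of the Feighn--Handel strategy (start from an improved relative train track, refine the filtration to realize $\mathcal{F}$, then run a terminating sequence of moves ordered by a complexity function), and in spirit it matches what happens in \cite{FH-11}; but since the paper itself offers no argument, the appropriate ``proof'' here is simply the citation, and your sketch, while broadly accurate, is far more than what the paper provides and would in any case need to defer all substantive verifications to \cite{FH-11} as you yourself acknowledge.
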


Feighn-Handel \cite{FH-11} showed that there exists some $k>0$ such that, given any $\phi\in\out$, $\phi^k$ is rotationless. So given any outer automorphism $\phi$, some finite power of $\phi$ has a completely split improved relative train track representative.

Recall that if the transition matrix $M_s$ corresponding to a stratum is the zero matrix (irreducible matrix)
then we say that that stratum $H_k$ is a zero (irreducible) stratum.

 The results in the following definition are some properties of CT's defined in the  work of Feighn-Handel in \cite{FH-11}.
 We will state only the ones we need here from \cite[Definition 1.29]{HM-20}.
\begin{lemma}
    
Given a relative train track map $f:G\rightarrow G$ with an invariant filtration $\emptyset = G_0 \subset G_1 \subset \cdots G_s = G$, which is a CT map representing $\phi$, the following properties are satisfied:
 \begin{enumerate}\label{CT}
  %\item \textbf{(Rotationless)} $f$ fixes each principal vertex and each periodic direction at a principal vertex (\cite{FH-11} Definition 3.18) is fixed by $Tf$, as a result $f$ is \emph{rotationless}.
  \item \textbf{(Completely split)} In each irreducible stratum, for each edge $E$, the path $f(E)$ is completely split.
  \item \textbf{(Vertices)} The endpoints of all indivisible Nielsen paths are vertices. The terminal endpoint of each nonfixed NEG edge is fixed and every periodic direction at such a vertex is fixed.
  \item \textbf{(Periodic edges)} Each periodic edge is fixed.
  \item \textbf{(Zero strata)} Each zero strata $H_i$ is contractible and enveloped by (i.e including an irreducible stratum and included in an EG stratum) a EG strata
  $H_s, s>i$, such that every edge of $H_i$ is a taken in $H_s$. Each vertex of $H_i$ is contained in $H_s$ and
  link of each vertex in $H_i$ is contained in $H_i \cup H_s$.
 
% \item \textbf{(Nonlinear NEG edges)} \cite[Lemma 4.21]{FH-11} Each non-fixed NEG stratum $H_i$ is a single edge with its  NEG orientation and  a splitting $f_\#(E_i) = E_i\cdotp u_i$, where $u_i$ is a closed nontrivial  completely split  circuit. If $u_i=1$ then $E_i$ is said to be \emph{linear edge }. If all edges in a stratum is linear, that stratum is linear. $u_i$ in the splitting $f_\#(E_i) = E_i\cdotp u_i$ is a  Nielsen path if and only if $H_i$ is linear.
  
  \item \textbf{(Linear edges)} For each linear edge $E_i$ there exists a root free  Nielsen  path $w_i$ (i.e, if $w_i=b^k$ then $k=\pm 1$) such that $f_\#(E_i) = E_i w^{d_i}_i$ for some $d_i \neq 0$.
  
%  \item \textbf{(NEG Nielsen paths)} If $\sigma$ is some indivisible Nielsen path of height $i$ where $H_i=\{E_i\}$ is an NEG stratum, then $E_i$ is a linear edge.
   \end{enumerate}

\end{lemma}
\subsection{Weak attraction theorem }
\label{sec:7}
\begin{lemma}[\cite{HM-20} Corollary 2.17]
\label{WAT}
 Let $\phi\in \out$ be exponentially growing. Let $\Lambda^\pm_\phi$ be a dual lamination pair for $\phi$. Then for any line $\gamma\in\mathcal{B}$ not carried by $\mathcal{A}_{na}(\Lambda^{\pm}_\phi)$ at least one of the following hold:
\begin{enumerate}
 \item $\gamma$ is attracted to $\Lambda^+_\phi$ under iterations of $\phi$.
   \item $\gamma$ is attracted to $\Lambda^-_\phi$ under iterations of $\phi^{-1}$.
\end{enumerate}
Moreover, if $V^+_\phi$ and $V^-_\phi$ are attracting neighborhoods for the laminations $\Lambda^+_\phi$ and $\Lambda^-_\phi$ respectively, there exists an integer $l\geq0$ such that for any line $\gamma\in\mathcal{B}$ at least one of the following holds:
\begin{itemize}
 \item $\gamma\in V^-_\phi$.
\item $\phi^l(\gamma)\in V^+_\phi$
\item $\gamma$ is carried by $\mathcal{A}_{na}(\Lambda^{\pm}_\phi)$.
\end{itemize}

\end{lemma}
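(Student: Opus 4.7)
The plan is to deduce both parts from Lemma~\ref{NAS} together with the dynamics of $\phi$ acting as a homeomorphism on the compact space $\mathcal{B}$. Fix a CT representative $f\co G\to G$ of $\phi$ with a filtration in which $\Lambda^+_\phi$ corresponds to the EG stratum $H_r$. Choose $V^+_\phi$ and $V^-_\phi$ to be attracting neighborhoods defined by sufficiently long generic leaf segments, taken disjoint; this is possible because no leaf of $\Lambda^-_\phi$ is asymptotic to any leaf of $\Lambda^+_\phi$, as noted in Section~\ref{sec:5}. By definition of attracting neighborhood, $\phi(V^+_\phi)\subset V^+_\phi$ and $\phi^{-1}(V^-_\phi)\subset V^-_\phi$.

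For the dichotomy statement, suppose $\gamma$ is not carried by $\mathcal{A}_{na}(\Lambda^\pm_\phi)$ yet is attracted to neither lamination. I plan to show that every weak accumulation point of the forward orbit $\{\phi^n(\gamma)\}_{n\ge 0}$ is carried by $\mathcal{A}_{na}$. Granted this, Lemma~\ref{NAS}(5) applied to $\{\phi^n(\gamma)\}$ forces $\phi^n(\gamma)$ to be carried by $\mathcal{A}_{na}$ for all large $n$; the $\phi$-invariance of the set of carried lines (immediate from $\phi$-invariance of $\Lambda^+_\phi$) then pulls back to show $\gamma$ itself is carried, contradicting the hypothesis. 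To establish the accumulation claim, let $\mu=\lim \phi^{n_k}(\gamma)$ be a weak accumulation point. Since $\gamma$ is not attracted to $\Lambda^+_\phi$, no finite subpath of $\mu$ can contain arbitrarily long generic leaf segments of $\Lambda^+_\phi$; otherwise $\phi^{n_k}(\gamma)\in V^+_\phi$ for large $k$, contradicting the hypothesis. Reading off $\mu$ from the complete splittings of $f^{n_k}_\#(\gamma)$, and using the explicit description of $\mathcal{A}_{na}(\Lambda^+_\phi)$ from Section~\ref{sec:6} (which, in the geometric case, consists of a free-factor system containing $[\pi_1 G_{r-1}]$ together with $[\langle c\rangle]$ for the conjugacy class represented by the iNP $\rho_r$), this forces every finite subpath of $\mu$ to be supported by $\mathcal{A}_{na}$, so $\mu$ is carried by $\mathcal{A}_{na}$.

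For the ``moreover'' statement I argue by contradiction using compactness of $\mathcal{B}$. Suppose no uniform $l$ exists; then choose a sequence $\gamma_n\in\mathcal{B}$ and integers $l_n\to\infty$ with $\gamma_n\notin V^-_\phi$, $\phi^{l_n}(\gamma_n)\notin V^+_\phi$, and $\gamma_n$ not carried by $\mathcal{A}_{na}$. After passing to a subsequence, $\gamma_n\to \gamma$. Openness of $V^-_\phi$ gives $\gamma\notin V^-_\phi$; the closure Lemma~\ref{NAS}(1) combined with Lemma~\ref{NAS}(5) forbids $\gamma$ from being carried by $\mathcal{A}_{na}$, for otherwise $\gamma_n$ would be carried for large $n$. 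By the first part, some $l$ satisfies $\phi^l(\gamma)\in V^+_\phi$; continuity of $\phi^l$ and openness of $V^+_\phi$ then yield $\phi^l(\gamma_n)\in V^+_\phi$ for all large $n$, and forward invariance $\phi(V^+_\phi)\subset V^+_\phi$ produces $\phi^{l_n}(\gamma_n)\in V^+_\phi$ whenever $l_n\ge l$, contradicting the choice of $\gamma_n$. The hard step throughout is the accumulation analysis in the first part: one must rule out the possibility that $\mu$ is neither a generic leaf of $\Lambda^+_\phi$ nor carried by $\mathcal{A}_{na}$, and this requires careful bookkeeping of complete splittings of the CT $f$ and the precise structure of the nonattracting subgroup system recalled in Section~\ref{sec:6}.
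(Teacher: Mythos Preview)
The paper does not prove Lemma~\ref{WAT}; it is quoted verbatim from \cite{HM-20}, Corollary~2.17, and used as a black box. There is therefore no in-paper proof to compare with, and I assess your sketch on its own merits.

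The ``moreover'' argument contains a concrete error. After passing to a subsequence with $\gamma_n\to\gamma$, you claim that Lemma~\ref{NAS}(1),(5) forbid $\gamma$ from being carried by $\mathcal{A}_{na}$, ``for otherwise $\gamma_n$ would be carried for large $n$.'' That is not what Lemma~\ref{NAS}(5) says: it requires \emph{every} weak limit of \emph{every} subsequence to be carried, not merely one chosen limit. The weak topology on $\mathcal{B}$ is non-Hausdorff, so a convergent sequence can have many limits, and carriage by $\mathcal{A}_{na}$ is a closed condition (Lemma~\ref{NAS}(1)), not an open one. Nothing you have written rules out that $\gamma$ is carried while no $\gamma_n$ is; in that case the first part of the lemma gives you nothing about $\gamma$, and the contradiction evaporates.

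In the dichotomy part, the step you yourself flag as hard is not actually carried out. From ``$\gamma$ is not attracted to $\Lambda^+_\phi$'' you correctly get that forward iterates never enter $V^+_\phi$, hence any accumulation point $\mu$ lies outside $V^+_\phi$; but $\mu\notin V^+_\phi$ is far weaker than ``$\mu$ is carried by $\mathcal{A}_{na}$.'' For instance, a generic leaf of $\Lambda^-_\phi$ contains no long generic leaf segment of $\Lambda^+_\phi$ and yet is \emph{not} carried by $\mathcal{A}_{na}(\Lambda^\pm_\phi)=\mathcal{A}_{na}(\Lambda^-_\phi)$. Your argument never uses the hypothesis that $\gamma$ is not attracted to $\Lambda^-_\phi$ under $\phi^{-1}$, and the appeal to ``reading off $\mu$ from the complete splittings of $f^{n_k}_\#(\gamma)$'' is not meaningful for an arbitrary line $\gamma$, which need not admit a complete splitting. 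The Handel--Mosher proof is substantially longer precisely because closing this gap requires the nonattracting subgraph machinery and a delicate analysis of how long height-$r$ segments can persist without generating legal growth; a one-line invocation of Section~\ref{sec:6} does not substitute for that.
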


% end of file template.tex

 \subsection{Relative Hyperbolicity and Mj--Reeves Theorem}\label{sec:relhyp}

In this section we will describe the  setting we  will be working in and adapt  Mj--Reeves criteria of relative hyperbolicity into our setting. 
 Given a group $G$ and a collection of subgroups $K_{\alpha} < G$, the \emph{coned-off Cayley graph of $G$} or
  the \emph{electric space of $G$} relative to the collection $\{K_\alpha\}$ is a metric space which consists of the
  Cayley graph of $G$ and a collection of vertices $v_\alpha$ (one for each left coset of $K_\alpha$) such that each point of (a left coset of) 
  $K_\alpha$ is joined to (or coned-off at) $v_\alpha$ by an edge of length $1/2$. The resulting metric space is
  denoted by $(\widehat{G}, {|\cdot|}_{el})$.

 A group $G$ is said to be (weakly) relatively hyperbolic relative to the collection of subgroups
  $\{K_\alpha\}$ if $\widehat{G}$ is a $\delta-$hyperbolic metric space, in the sense of Gromov.
  $G$ is said to be strongly hyperbolic relative to the collection $\{K_\alpha\}$ if the coned-off
  space $\widehat{G}$ is weakly hyperbolic relative to $\{K_\alpha\}$ and it satisfies the
  \textit{bounded coset penetration} property (see \cite{Fa-98}): 
 for all $\calL>1$, there is a number $B(\calL)$ such that for an efficient (i.e passes through a cone point at most once) $\calL$--quasi geodesic $p$ in $\widehat{G}$ with $p(s)=v_{\alpha}$ we have:
\begin{itemize}
\item If $d_{K_\alpha}(p(s-1), p(s+1))\geq B(\calL)$, every efficient $\calL$--quasi geodesic $p'$ in $\widehat{G}$ with the same endpoints also pass through $v_{\alpha}$,
\item In that case, assuming $p'(s')=v_{\alpha}$,
\[d_{K_\alpha}(p(s-1), p'(s'-1))\leq B(\calL)\,\,\,\text{and}\,\,\, d_{K_\alpha}(p(s+1), p'(s'+1))\leq B(\calL) \]
\end{itemize}
In this paper, when we say ``relative hyperbolicity" we always mean ``strong relative hyperbolicity".

To develop the theorem we will be using for relative hyperbolicity, we need to translate some of the concepts into our setting. Let

 \[ 1\to \F \to  E_{Q} \to Q \to 1\]
be the short exact sequence that gives rise to the extension $ E_{Q}= \F \rtimes Q$.
The Cayley graph of $ E_{Q}$, which we will  denote by $\Gamma$ is a tree ($T$) of spaces whose edge groups and vertex groups are identified
with cosets of $\F$. When we refer to the underlying tree, we will use the notation $T$. In our case, the maps from the edge groups  to the vertex groups are all quasiisometries.

Let $\mathcal{K}=\{K_s\} < \F$ be a malnormal collection of subgroups which are fixed by each $Q_s\in \mathcal Q$ and whose properties we will  describe in Section \ref{SA}. We will first electrify all  cosets of $\{K_s\}$ in all $\F$ (in all vertex and edge groups) and obtain the coned off Cayley graph $(\widehat{\Gamma}, {|\cdot|}_{el})$ whose edge groups $\widehat\Gamma_e$ and vertex groups $\widehat\Gamma_v$ are all identified with $\widehat \F$ where the latter denotes the electrified $\F$ with along $\mathcal{K}$. Since $\F$ is hyperbolic and the collection of subgroups $\{K_s\}$  is  mutually malnormal and \textit{quasiconvex} in $\F$, $\widehat{\Gamma}$ is a tree of strongly relatively hyperbolic spaces. In the terminology of Mj--Reeves Theorem \ref{mj-reeves} below, $\widehat{\Gamma}$ is the \emph{induced tree of coned-off spaces} obtained from coning off $\Gamma $ along all copies of $K_s\in \F $ on which all the flaring conditions will be checked.  %$\Gamma_{s}={K_s} \rtimes Q_s$. 

We first give the definitions we will need.

\begin{definition} We call a disk $f\co [-m,m]\longrightarrow \widehat \Gamma$ a \emph{hallway} of length $2m$ if,
\begin{enumerate}
\item $f^{-1}(\bigcup \widehat\Gamma_{v}:v\in T)=\{-m, \cdots, m\}\times I $
\item $f$  maps $i\times I $ into a geodesic in a coned-off  vertex space $\widehat\Gamma_v$
\item $f$ is transverse to $\bigcup \widehat\Gamma_e$
\end{enumerate}

A hallway is \emph{$\lambda$-  hyperbolic} if,
\[\lambda \ell (f(0\times I)) \leq \max \{ \ell(f(-m\times I)), \ell(f(m\times I)) \}\]
%\[\lambda \ell(f(\{0\}\times I)) \leq max\{\ell(f(\{−m\}\times I)), \ell(f(\{m\} \times I))\}\]

A hallway is \emph{essential} if the edge path in $T$ resulting from projecting $\widehat \Gamma$
onto $T$ does not backtrack (and hence is a geodesic segment in the
tree $T$)
\end{definition}
 The \emph{cone locus} is the graph (forest ) whose vertices are cone points in the  vertex groups and whose edges are cone points in the edge groups. Connected components are called \emph{maximal cone subtrees}. 
In our case, maximal cone subtrees are $T_{s}$’s which
are the cosets of $\Gamma_{s}={K_s} \rtimes Q_s$ (after electrocuting the cosets of $K_{s}$ in $\Gamma_{s}$) inside $\Gamma$. 

\begin{definition}(Cone bounded hallway)
An essential hallway of
length $2m$ is cone-bounded if $f(i\times \partial I)$ lies in the cone locus for $i=\{-m,\cdots, m\}$.

\end{definition}

The main relative hyperbolicity result we will be using is the following:
\begin{theorem}[Theorem 4.6,\cite{ MjR-08}]\label{mj-reeves}  Let $\mathcal G$ be a finite graph of strongly relatively hyperbolic groups satisfying,
\begin{enumerate}
\item The quasi-isometrically (q.i) embedded condition,
\item Strictly type preserving condition,
\item The q.i--preserving electrocution condition,

and moreover, the induced tree of coned-off spaces satisfying,

    \item Hallways flare condition,
    \item Cone-bounded hallways strictly flare condition
 \end{enumerate}
Then $\mathcal G$ is strongly hyperbolic relative to the family of maximal parabolic groups.

\end{theorem}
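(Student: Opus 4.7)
The plan is to prove this by following the blueprint of the Bestvina--Feighn combination theorem for hyperbolic groups, adapted to the relatively hyperbolic setting. I would invoke one of the equivalent characterizations of strong relative hyperbolicity: weak hyperbolicity of the coned-off Cayley graph together with Farb's bounded coset penetration (BCP) property, which by Bowditch's work amounts to fineness plus $\delta$-hyperbolicity of the coned-off space. Accordingly the proof splits into two halves: establishing $\delta$-hyperbolicity of the induced tree of coned-off spaces $\widehat\Gamma$, and establishing mutual coboundedness of the cone loci, which upgrades weak to strong relative hyperbolicity.

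For $\delta$-hyperbolicity of $\widehat\Gamma$, I would exploit the tree-of-spaces structure guaranteed by hypotheses (1)--(3). The q.i.--embedded condition ensures the coned-off edge spaces map as quasi-isometric embeddings into coned-off vertex spaces; strict type preservation ensures these embeddings send cone points to cone points, so the peripheral structure is preserved along edges of $T$; and the q.i.--preserving electrocution condition guarantees that coning off commutes, up to quasi-isometry, with the tree-of-spaces structure. With these in place, the hallways flare condition (4) supplies the exponential divergence needed for a Bestvina--Feighn style argument: given a pair of quasigeodesics in $\widehat\Gamma$ with common endpoints on a single vertex space, I would construct a \emph{hallway} (a family of geodesics in consecutive vertex spaces sitting over a geodesic in $T$) connecting them, and use flaring to uniformly bound the length of any central slice of a hyperbolic hallway. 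This, together with a standard ladder/Morse-stability argument, yields slim triangles and hence $\delta$-hyperbolicity of $\widehat\Gamma$.

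For the BCP upgrade, the cone-bounded hallways strictly flare condition (5) is the essential input. A cone-bounded hallway travels between two maximal cone subtrees $T_s$, i.e., between two parabolic cosets $\Gamma_s = K_s\rtimes Q_s$ lifted to the tree of spaces. The strict flare in this restricted setting says that two such cone loci cannot fellow-travel in $\widehat\Gamma$ for more than a bounded distance. This is exactly Bowditch's mutually cobounded condition applied to the cone subtrees (see \cite[Definition 2.3, Lemma 2.4]{MjR-08}); combining it with the $\delta$-hyperbolicity established above then yields BCP via the standard equivalence between Farb's BCP formulation and Bowditch's fine hyperbolic graph characterization.

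The main technical obstacle, I expect, would be in the first half: carefully constructing the electric ladder between two quasigeodesics in $\widehat\Gamma$ and verifying that its successive slices can be chosen as geodesics in coned-off vertex spaces whose lengths are controlled by flaring, while at the same time the ladder as a whole projects to a geodesic in $T$. The subtle point is that geodesics in $\widehat\Gamma$ can repeatedly dive into and exit cones along the $\F$-fibers, and one must ensure the flaring argument is applied to hallways whose side-lengths measure coned-off distance (not the original Cayley metric). Once this bookkeeping is in place and the ladder is shown to be quasi-isometrically embedded, the remaining arguments for both hyperbolicity and BCP are essentially formal consequences of the flare hypotheses, mirroring the surface-group with punctures case treated in \cite{MjR-08}.
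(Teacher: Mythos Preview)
This theorem is not proved in the paper at all: it is Theorem~4.6 of Mj--Reeves \cite{MjR-08}, quoted verbatim as a black box. The paper's contribution is the subsequent Theorem~\ref{mjr}, whose proof consists only of verifying that in the specific setting of $\F\rtimes Q$ with $Q$ free, conditions (1)--(3) of Theorem~\ref{mj-reeves} hold automatically (because edge-to-vertex maps are quasi-isometries and each $[K_s]$ is $Q$-invariant), so that only the two flaring conditions remain to be checked. There is therefore no proof in the paper to compare your proposal against.

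Your sketch is a plausible high-level outline of how the Mj--Reeves theorem itself is proved in \cite{MjR-08}: hyperbolicity of the induced tree of coned-off spaces via a Bestvina--Feighn-style hallway argument, followed by mutual coboundedness of cone loci from the cone-bounded flare condition to obtain BCP. That is indeed the architecture of the original proof. But attempting to reproduce that argument here is beside the point; in this paper the correct ``proof'' of the statement is simply a citation, and your effort should instead go into the reduction recorded as Theorem~\ref{mjr} and into establishing the two flaring hypotheses in the free-by-free setting.
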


We will be using  following shorter version of this theorem:
\begin{theorem} \label{mjr}
 Let $Q < \out$ be infinite cyclic or a free group of rank 2.
  Suppose  $\{K_s\}$ is a mutually malnormal collection of quasiconvex subgroups of $\F$ such that the conjugacy class
  of each $K_s$ is invariant under $Q$. If the induced tree of coned-off spaces defined by this collection satisfies the hallways flare condition and the cone-bounded hallways strictly flare condition then the extension group $\F \rtimes Q$ is strongly  hyperbolic relative to the  the collection of subgroups $\{K_s\rtimes \widehat{Q}_s\}$, where $\widehat{Q}_s$ is a lift that preserves $K_s$.
\end{theorem}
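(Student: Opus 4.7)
The plan is to deduce the statement directly from the Mj--Reeves criterion recorded as Theorem \ref{mj-reeves}. The relevant graph of groups is the one-vertex, $\rk(Q)$-edge graph arising from the semidirect product $\F \rtimes Q$: the vertex group and each edge group is identified with $\F$, while the two inclusions of an edge group into the vertex group are, respectively, the identity and a chosen automorphism $\Phi_i \in \aut$ lifting a generator $\phi_i$ of $Q$. Since $\F$ is a hyperbolic group and $\{K_s\}$ is a mutually malnormal, quasiconvex collection, Bowditch's characterization shows that $\F$ is strongly hyperbolic relative to $\{K_s\}$. This gives each vertex and edge group the strongly relatively hyperbolic structure required to apply Theorem \ref{mj-reeves}.

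I would then verify the three ``input'' conditions of Theorem \ref{mj-reeves} one at a time. The q.i.-embedded condition is immediate because the edge-to-vertex inclusions are automorphisms of $\F$, which are bilipschitz with respect to any word metric. For the strictly type preserving condition, I use the hypothesis that $Q$ preserves each conjugacy class $[K_s]$: given a lift $\Phi_i$ of a generator of $Q$ and an index $s$, we may precompose $\Phi_i$ with a suitable inner automorphism so that the resulting lift $\Phi_{i,s}$ satisfies $\Phi_{i,s}(K_s) = K_s$, and then $\Phi_{i,s}$ sends cosets of $K_s$ to cosets of $K_s$, which is exactly the type-preserving condition for the corresponding edge-to-vertex map. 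The q.i.-preserving electrocution condition then follows because $\Phi_{i,s}$ permutes the peripheral cosets bijectively and is bilipschitz off them, hence induces a quasi-isometry of the coned-off space $\widehat{\F}$. Combining these three verifications with the two flaring hypotheses already assumed, Theorem \ref{mj-reeves} yields strong relative hyperbolicity of $\F \rtimes Q$ with respect to the stabilizers of the maximal cone subtrees $T_s$, which I identify with the subgroups $K_s \rtimes \widehat{Q}_s$ for appropriate lifts $\widehat{Q}_s$ of $Q$.

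The main technical point I expect to need care with is the construction and identification of the lifts $\widehat{Q}_s$, particularly in the free rank $2$ case: for each $s$ one must simultaneously choose lifts $\Phi_{1,s}, \Phi_{2,s} \in \aut$ of the two generators of $Q$ that fix $K_s$ setwise, verify that the subgroup they generate is again free of rank $2$ (so is a genuine lift of $Q$), and check that this subgroup coincides with the stabilizer of the maximal cone subtree $T_s$ in $\F \rtimes Q$. Malnormality of $\{K_s\}$ forces the conjugators needed to redefine each $\Phi_{i,s}$ to be essentially unique, which should make these verifications routine; nonetheless this bookkeeping---rather than the Mj--Reeves input itself---is the only step that does not amount to a direct translation between our hypotheses and the conditions of Theorem \ref{mj-reeves}.
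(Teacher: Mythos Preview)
Your proposal is correct and follows essentially the same route as the paper: both arguments set up $E_Q$ as a graph of (strongly) relatively hyperbolic groups with vertex and edge groups $\F$, observe that the edge-to-vertex maps are automorphisms of $\F$ preserving the conjugacy classes $[K_s]$, and conclude that the q.i.-embedded, strictly type preserving, and q.i.-preserving electrocution conditions of Theorem~\ref{mj-reeves} hold automatically, leaving only the two flaring hypotheses. The paper is in fact terser than you are about the identification of the maximal parabolics with $K_s\rtimes\widehat{Q}_s$, so the bookkeeping you flag at the end is a point on which you are being more careful than the original, not less.
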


\begin{proof} Since Theorem \ref{mj-reeves} has more conditions to be satisfied, we will show that in our setting  it is sufficient to have only two of the conditions; the hallways flare
  and the cone-bounded hallways strictly flare conditions to be satisfied to have strong relative hyperbolicity.
  We have the  short exact sequence
  \[1\to\F\to E_{Q} \to Q\to1\]
 where the quotient group is either a free group or an infinite
 cyclic group. The Cayley graph of the quotient group is thus a tree, which enables us to view the Cayley graph $\Gamma$  of  $E_{Q}$ as a tree, $T$, of metric spaces.
 The vertex groups and edge groups of this tree of metric spaces are identified with cosets of $\F$ in $\Gamma$. The maps between the edge space and the two vertex spaces
 (which are the initial and terminal vertices of the edge in consideration) are in fact quasi-isometries in this case. Since each $K_s$ is preserved up to conjugacy, it follows immediately that the \emph{q.i. embedded condition, strictly type preserving condition} and the \emph{q.i. preserving
 electrocution condition} are all satisfied (see \cite[Definition 3.1, 3.2]{MjR-08}).
 
 By the condition imposed on the collection of subgroups $\{K_s\}$ we know that each vertex space and each edge space is (strongly) hyperbolic relative to this collection. Hence $\Gamma$ can be viewed as a tree, $T$, of strongly relatively hyperbolic spaces, denoted again by $\Gamma$.  The collection of maximal parabolic subgroups in this case corresponds to the collection of subgroups $\{K_s\rtimes \widehat{Q}_s\}$.
 Hence if the hallways flare condition and the cone-bounded strictly flares condition is satisfied for $\widehat{\Gamma}$, the conclusion follows.
 \end{proof}

From now on unless we state otherwise, relative hyperbolicity will mean strong relative hyperbolicity. 

 \section{Admissible subgroup systems, Flaring under weak attraction  and relative hyperbolicity in free-by-cyclic groups  }
 \label{SRH}

  \subsection{Standing assumptions, notations and definitions}\label{sec:assump}

Now we will list some definitions, notations and assumptions under which we will prove the theorems in this section.

Let $\phi\in\out$ be rotationless, exponentially growing  and $f: G \to G$ be a CT map representing $\phi$.

\noindent {\bf Critical constant:}\label{critical}

Bounded cancellation notion we use is adapted from  Bestvina-Feighn-Handel's bounded cancellation lemma for train-track representatives of automorphisms of $\F$ ( \cite{BFH-97}); which is inspired by Cooper's same named lemma (\cite{Co-87}).
\begin{definition}
Let  $H_r$ be an exponentially growing stratum with associated Perron-Frobenius eigenvalue $\lambda_r$ and let $BCC(f)$ denote the bounded cancellation constant for $f$. Then, the number

  \[\frac{2BCC(f)}{\lambda_r-1}\]

  is called  the \emph{critical constant} for $H_r$.

\end{definition}
  It is easy to see  that for every number $C>0$ that exceeds the
  critical constant, there is some $1\geq\mu>0$ such that if $\alpha\beta\gamma$ is a concatenation of $r-$legal paths where
   $\beta $ is some $r-$legal segment of length $\geq C$, then the $r-$legal leaf segment of
   $f^k_\#(\alpha\beta\gamma)$ corresponding to $\beta$ has length  at least $\mu\lambda^k{|\beta|}_{H_r}$  (see \cite[pp 219]{BFH-97}).
  To summarize, if we have a path in $G$ which has some $r-$legal ``central''  subsegment of length greater than the
    critical constant, then this segment is protected by the bounded cancellation lemma and under iteration, the
    length of this segment grows exponentially.
    
We will denote by	${|\alpha|}_{H_r}$ the length of an edge-path $\alpha \subset G$ with respect to $H_r$ which is calculated by counting the edges of $\alpha$ in $H_r$ only.

For the rest of this paper, let $C$ be a number larger than the maximum of all critical constants corresponding to EG strata of $f\co G \rightarrow G$.

\noindent {\bf Admissible subgroup system:} \label{K}

Given   $\phi\in \out$, 
let $\mathcal{K} = \{ [K_1], [K_2], \ldots , [K_p]\}$  be a subgroup system and $\mathcal{L}^{+}_{\mathcal{K}}(\phi)$ (respectively, $\mathcal{L}^{-}_{\mathcal{K}}(\phi)$) denote the collection of attracting (respectively, repelling) laminations of $\phi$ whose generic leaves are not carried by $\mathcal{K}$. Assume,
	\begin{enumerate} \label{SA}
 \item $\mathcal{K}$ is a malnormal subgroup system, 
 \item $\mathcal{L}^{+}_{\mathcal{K}}(\phi), \mathcal{L}^{-}_{\mathcal{K}}(\phi)$ are both nonempty, 
		
		\item $\phi([K_s]) = [K_s]$ for each $s\in \{1, \cdots p\}$, 
		\item Let $V^+$ denote the union of attracting neighborhoods of elements of $\mathcal{L}^+_{\mathcal{K}}(\phi)$ defined by generic leaf segments of length $\geq 2C$. Define $V^-$ similarly for $\mathcal{L}^-_{\mathcal{K}}(\phi)$.  By increasing $C$ if necessary,
          \[V^+ \cap V^- = \emptyset\]
		
		\item Every conjugacy class which is not carried by $\mathcal{K}$ is weakly attracted to some element of $\mathcal{L}^+_{\mathcal{K}}(\phi)$.
	
	\end{enumerate}

We will call the subgroups system satisfying the properties above an \emph{admissible subgroup system for $\phi$} (where $\phi\in \out$ is rotationless and exponentially growing). If we are given some finitely generated group $Q = \langle \phi_1, \phi_2, \ldots, \phi_k \rangle$ we say that $\mathcal{K}$ is an \emph{admissible subgroup system for $Q$} if $\mathcal{K}$ is an admissible subgroup system for each $\phi_i$. We will simply write ``admissible subgroup system" when the context is clear.

\noindent {\bf Generalized Weak Attraction:}

Let $\mathcal{K}$ be an admissible subgroup system for some exponentially growing 
 $\phi\in\out$. Since $\mathcal{K}$ is a malnormal subgroup system, by \cite[Lemma 1.11]{HM-20} if every weak limit of every subsequence of $\{\alpha_i\}$ is carried by $\mathcal{K}$, then $\alpha_i$ is carried by $\mathcal{K}$ for all sufficiently large $i$. We use this property to prove an upgraded version of the weak attraction theorem (Lemma \ref{WAT}) which shows that our assumptions on an admissible subgroup system $\mathcal{K}$ are very natural. This theorem also gives us the insight on how to construct such admissible systems (see section \ref{admissconst}).

\begin{theorem}[Generalized weak attraction]\label{modwat}
Let $\phi\in\out$ and let $\mathcal K$ be an admissible subgroup system. Then there exists some $M > 0$ such that for every line or conjugacy class  $\ell$, we have at least one of the following:
\begin{enumerate}
	\item $\ell\in V^-$.
	\item $\ell$ is carried by $\mathcal{K}$.
	\item $\phi^m_\#(\ell) \in V^+$ for every $m\geq M$.
\end{enumerate}
Moreover, a conjugacy class $\alpha$ is weakly attracted to some element of $\mathcal{L}_\mathcal{K}^+(\phi)$ or $\mathcal{L}_\mathcal{K}^-(\phi)$ if and only if it is not carried by $\mathcal{K}$.
\end{theorem}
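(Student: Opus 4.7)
The plan is to combine Lemma~\ref{WAT} with standing assumption~(5). I would first dispose of the ``moreover'' assertion, which is essentially built into the setup: if a conjugacy class $\alpha$ is carried by $\mathcal{K}$, the invariance $\phi(K_s)=K_s$ from standing assumption~(3) forces every $\phi^n_\#(\alpha)$ to be carried by $\mathcal{K}$, and closedness of the set of lines carried by $\mathcal{K}$ in the weak topology then rules out any weak limit being a generic leaf of a $\Lambda^\pm\in\mathcal{L}^\pm_{\mathcal{K}}(\phi)$, since such leaves are by definition not carried by $\mathcal{K}$. The reverse implication is exactly standing assumption~(5).

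For the main statement, enumerate $\mathcal{L}^+_{\mathcal{K}}(\phi)=\{\Lambda^+_1,\dots,\Lambda^+_N\}$ with duals $\Lambda^-_i$ and corresponding attracting neighborhoods $V^\pm_i$ whose unions are $V^\pm$. Applying Lemma~\ref{WAT} to each pair $(\Lambda^+_i,\Lambda^-_i)$ gives integers $l_i\ge 0$ as in its uniform conclusion, and I would set $M=\max_i l_i$. For $\ell$ with $\ell\notin V^-$ and $\ell$ not carried by $\mathcal{K}$, the inclusion $V^-_i\subset V^-$ forces $\ell\notin V^-_i$ for every $i$, so Lemma~\ref{WAT} leaves two alternatives per $i$: either $\phi^{l_i}_\#(\ell)\in V^+_i$, in which case forward invariance of attracting neighborhoods yields $\phi^m_\#(\ell)\in V^+$ for every $m\ge M$ and we are finished, or $\ell$ is carried by $\mathcal{A}_{na}(\Lambda^\pm_i)$.

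The essential content is the remaining case: $\ell$ is carried by $\mathcal{A}_{na}(\Lambda^\pm_i)$ for every single $i$. For conjugacy classes this case cannot occur, by the contrapositive of Lemma~\ref{WAT} combined with standing assumption~(5): such an $\ell$ would fail to be attracted to any $\Lambda^+_i\in\mathcal{L}^+_{\mathcal{K}}(\phi)$ and so would have to be carried by $\mathcal{K}$. For a general line I would bootstrap via density of axes: fix a lift $\tilde\ell$ and for each $i$ pick a conjugate $K_i$ of an element of $\mathcal{A}_{na}(\Lambda^\pm_i)$ with $\partial\tilde\ell\subset\partial K_i$, and set $H=\bigcap_i K_i$. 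Malnormality of each $\mathcal{A}_{na}(\Lambda^\pm_i)$ together with standard intersection properties of finite rank subgroups of $\F$ should yield that $H$ has finite rank, $\partial\tilde\ell\subset\partial H$, and that axes of nontrivial elements of $H$ are weakly dense in the set of lines carried by $[H]$. Each such axis lies in every $\mathcal{A}_{na}(\Lambda^\pm_i)$, hence is carried by $\mathcal{K}$ by the conjugacy class case, and closedness of the set of lines carried by $\mathcal{K}$ then forces $\ell$ itself to be carried by $\mathcal{K}$, contradicting our hypothesis.

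The main technical obstacle is this final density-and-intersection step. One must justify both that the chosen conjugates intersect to a finite-rank subgroup whose boundary still captures $\partial\tilde\ell$, and that axes inside this intersection weakly approximate $\ell$. Malnormality of the $\mathcal{A}_{na}(\Lambda^\pm_i)$, built into the definition of an admissible subgroup system via standing assumption~(1), is precisely what allows this control, and is the reason malnormality is a central hypothesis throughout.
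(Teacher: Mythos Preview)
Your argument is correct and follows essentially the same route as the paper: apply Lemma~\ref{WAT} to each dual pair, and in the residual case where $\ell$ is carried by every $\mathcal{A}_{na}(\Lambda^\pm_i)$, approximate $\ell$ by conjugacy classes lying in the common intersection and invoke the conjugacy-class case plus closedness of the set of lines carried by $\mathcal{K}$. Your density argument via $H=\bigcap_i K_i$ and axes of elements of $H$ is in fact more explicit than the paper's, which simply asserts the existence of such an approximating sequence of conjugacy classes without spelling out why the intersection of these closed sets is again the closure of a set of circuits. One minor correction: malnormality of each $\mathcal{A}_{na}(\Lambda^\pm_i)$ is Lemma~\ref{NAS}(1), not standing assumption~(1) (which is about $\mathcal{K}$), and the intersection step you describe rests on Howson's theorem together with the limit-set identity $\partial(\bigcap_i K_i)=\bigcap_i\partial K_i$ for finitely generated subgroups of $\F$, rather than on malnormality as such.
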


\begin{proof}
Let $\alpha$ be any conjugacy class in $\F$ that is weakly attracted to some element of $\mathcal{L}^+_\mathcal{K}(\phi)$. Then by item (4) of the condition of being an admissible system,  we know that there is at least some element of $\mathcal{L}^+_\mathcal{K}(\phi)$ whose nonattracting subgroup system does not carry $\alpha$ (see item (2) of Lemma \ref{NAS}). In other words, if a conjugacy class is carried by the nonattracting subgroup system of every element of $\mathcal{L}^+_\mathcal{K}(\phi)$, then that conjugacy class is not weakly attracted to any element of $\mathcal{L}^+_\mathcal{K}(\phi)$ and therefore, necessarily carried by $\mathcal{K}$. The converse also holds. Let $\alpha$ be weakly attracted to some element of  $\mathcal{L}_\mathcal{K}^+(\phi)$. Assume on the contrary that $\mathcal{K}$ carries $\alpha$. Then $\phi^m_\#(\alpha)$ is carried by $\mathcal{K}$ for all $m$. Since the set of lines carried by $\mathcal{K}$ is closed in the weak topology (\cite[Fact 1.8]{HM-20}), every weak limit of every subsequence of $\{\phi^m(\alpha)\}$ must be carried by  $\mathcal{K}$. So $\alpha$ cannot be weakly attracted to any element of $\mathcal{L}^+_\mathcal{K}(\phi)$, otherwise $\mathcal{K}$ would carry an element of $\mathcal{L}^+_\mathcal{K}(\phi)$, contradicting that $\mathcal{K}$ is an admissible system. Hence $\alpha$ cannot be carried by $\mathcal{K}$.  This completes the proof of ``moreover'' part.

Apply Lemma \ref{WAT} repeatedly for each dual lamination pair in $\mathcal{L}^+_\mathcal{K}(\phi)$ and $\mathcal{L}^-_\mathcal{K}(\phi)$, together with the open sets used in constructing $V^+, V^-$,  and let $M$ be greater than maximum of all the exponents obtained as output from the ``moreover''  part of Lemma \ref{WAT}. Now suppose both (1) and (3) fail with this choice of $M$. Then we can conclude that $\ell$ must be carried by the nonattracting subgroup system of every element of $\mathcal{L}^+_\mathcal{K}(\phi)$. Since the set of lines and circuits carried by each of the nonattracting subgroup system is a closed set and $\ell$ is in the intersection of these sets, there is a sequence of conjugacy classes $\{\alpha_i\}$ so that each $\alpha_i$ is carried by the nonattracting subgroup system of every element of  $\mathcal{L}^+_\mathcal{K}(\phi)$ and $\alpha_i$ weakly converges to $\ell$. Arguing as before in the previous paragraph, this means that each $\alpha_i$ is carried by $\mathcal{K}$. Hence, every weak limit of every subsequence of $\{\alpha_i\}$ must be carried by $\mathcal{K}$. Therefore $\ell$ must be carried by $\mathcal{K}$. This completes the proof of the proposition.

\end{proof}
\begin{remark}
Note that our proof above tells us that the set of lines and circuits in $V^+$ and $V^-$ are disjoint from the set of lines and circuits carried by $\mathcal{K}$. Moreover, when $\phi$ is hyperbolic, one may take $\mathcal{K} = \emptyset$ and only option (2) is eliminated from the theorem.

\end{remark}

\noindent {\bf Stallings subgraph of $\mathcal{K}$:}

The \emph{Stallings subgraph of $\mathcal{K}$} with respect to $G$, denoted by $G_\mathcal{K}$, is an immersion $\iota: G_\mathcal{K} \to G$ of a finite graph $G_\mathcal{K}$ whose components are core graph $A_1 \cup \ldots \cup A_p$ such that the subgroup $K_i < \F$ is conjugate to the image $\iota_{*} : \pi_1(A_i) \to \pi_1(G) \cong \F$.  We note that Stallings graphs are unique up to homeomorphism of domains.

\noindent {\bf Legality ratio of paths:} \label{sec:legrate}

The notion of legality ratio was first introduced in \cite[pp-236]{BFH-97} for fully irreducible hyperbolic elements. In the fully-irreducible setting there is only one stratum, and it is exponentially growing. So the notion is a lot simpler. We generalize their definition to  work for all exponentially growing outer automorphisms. The idea is to extend their definition to exponentially growing strata and applying it to a splitting of a given edge path according to their CT structure.

To assist the reader through the technical definition, we present the core idea as follows: given a finite path $\beta$ in $G$, we want to track the ratio the of sum of the lengths of maximal subpaths of $\beta$ which are sufficiently long generic leaf segments of laminations in $\mathcal{L}^+_\mathcal{K}(\phi)$, to the sum of the lengths of subpaths of $\beta$ which lift to the Stallings graph (corresponding to some maximal decomposition). To understand the motivation behind this strategy we refer the reader to Lemma \ref{comparison}.

First we describe how to count the length of a finite path or circuit which does not lift to $G_\mathcal{K}$. 
Let $\beta$ be any finite edge-path in $G$. Consider a decomposition $ \beta_1\beta_2\ldots \beta_p$, of $\beta$ so that for all $1\leq i < p$ if $\beta_i$ lifts to $G_\mathcal{K}$ then $\beta_{i+1}$ does not lift to $G_\mathcal{K}$, and vice versa. %and if $\beta_i$ does not lift to $G_\mathcal{K}$ then $\beta_{i+1}$ lifts to $G_\mathcal{K}$; . 

For a decomposition $\beta^d$ of $\beta$, let $L^d_{\mathcal{K}}(\beta)$ denote the sum of the lengths $\mid \beta_i \mid$ where $\beta_i$ is a component of $\beta^d$ which does not lift to $G_\mathcal{K}$. In other words, 
\[ L^d_{\mathcal{K}}(\beta)= \sum\limits_{\beta_i \in \beta^d_{\mathcal K}} \mid \beta_i \mid  \]
where $\beta^d_{\mathcal K}$ denotes the collection of components of $\beta^d$ that does not lift to  $G_\mathcal{K}$ .

%$\sum\limits_{\beta_i \text{ does not lift to }G_\mathcal{K}} \mid \beta_i \mid$, for this decomposition.\\

Next we describe how to count legality in a path crossing an exponentially growing stratum.  
To this end, let $\alpha$ be an edge-path in $G$ which does not lift to $G_\mathcal{K}$. Decompose $\alpha$ as a concatenation  $\alpha_1\alpha_2\ldots \alpha_q$ of paths so that for each $1\leq j \leq  q $, either 
\begin{itemize}
    \item $\alpha_j$ does not cross an exponentially growing strata associated to an element of $\mathcal{L}_\mathcal{K}^+(\phi)$, \\
    
or,\\
    
    \item $\alpha_j$ crosses at least one such exponentially growing (EG) strata and both endpoints of $\alpha_j$ are in the same EG strata associated to some element of $\mathcal{L}^+_\mathcal{K}(\phi)$. 
\end{itemize}
Further, we will only consider components $\alpha_j$ of $\alpha$ (if they exist) in this decomposition such that
\begin{enumerate}
    \item $\alpha_j$ is a generic leaf segment, for some element of $\mathcal{L}_\mathcal{K}^+(\phi)$. 
    \item If $\alpha_j$ has height $r$, then ${|\alpha_j|}_{H_r} \geq 2C$ where $ {|\alpha_j|}_{H_r}$ is the $H_r$--length of  $|\alpha_j|$ and C is the constant described earlier in Section \ref{critical}. 
\end{enumerate}
There is a decomposition of $\alpha$ which maximizes the sum $\sum\limits_{j} {\mid \alpha_j\mid}_{H_r}$ where $\alpha_j$ satisfies (1) and (2) above. We denote the maximal sum by $L^*_{leg}(\alpha)$. If $\alpha$ does not have any subpath which is a sufficiently long generic leaf segment of some element of $\mathcal{L}_\mathcal{K}^+(\phi)$, then we define $L^*_{leg}(\alpha) = 0$.

Now we choose a decomposition  $ \beta_1\beta_2\ldots \beta_p$ of $\beta$  and a decomposition $\alpha^i_1\alpha^i_2\ldots \alpha^i_q$ of each $\beta_i \in \beta^d_{\mathcal K}$ as above so that the  ratio $\frac{\sum\limits_i L^*_{leg}(\beta_i)}{L^d_\mathcal{K}(\beta)}$ is maximized. Let
\[ LEG_\mathcal{K}(\beta):= \displaystyle\frac{ L_{leg}(\beta)}{L_\mathcal{K}(\beta)} :=\max \big{\{}\frac{\sum\limits_i L^*_{leg}(\beta_i)}{L^d_\mathcal{K}(\beta)}\big{\}}\]
where the numerator $L_{leg}(\beta)$ computes legal part of $\beta$ relative to $\mathcal{K}$ and the denominator $L_\mathcal{K}(\beta)$ computes the total length of $\beta$ relative to $\mathcal{K}$.

  Note that the components which do not cross any exponentially growing stratum associated to elements of $\mathcal{L}^+_\mathcal{K}(\phi)$  and concatenations of fixed edges and indivisible Nielsen paths are  ignored in computation of $L_\mathcal{K}(\beta)$.

In a symmetric fashion working with a CT map $g: G' \to G'$ for $\phi^{-1}$ and using the set of laminations $\mathcal{L}^-_\mathcal{K}(\phi)$, we define the notions of legality ratio for a circuit and corresponding legality and length computations in the backward direction. We use the notations $LEG^-_\mathcal{K}(\phi^{-m}_\#(\beta)), L^-_{leg}(\beta), L^-_\mathcal{K}(\beta)$ to denote the aforementioned quantities. 

\subsection{Free-by-cyclic relatively hyperbolic extensions}

Let $\phi\in\out$ be exponentially growing and satisfy the conditions in Section \ref{sec:assump} (i.e, with an admissible subgroup system $\mathcal{K}$).
In this section we will show that under the assumptions of the previous section, the extension of $\langle \phi \rangle $  is strongly relatively hyperbolic with respect to the finite family of subgroups $\{K_s \rtimes \langle \Phi_s \rangle\}_{s=1}^p$, where $\Phi_s\in \aut$ is some lift of $\phi$ which preserves $K_s\in \mathcal K$, where $\mathcal K $ is an admissible subgroup system for $\phi$.

To this end, we will first establish under which conditions legal paths grow with the iteration of under $\phi$. This subsection borrows from \cite[Section 2]{Gh-23} and we will refer the reader to the relevant sections of that paper for most of the technical details of the proofs. We however will give full details of the proofs of two crucial results regarding the comparison of the lengths (Lemma \ref{comparison}) and the growth of the legality ratio (Lemma \ref{legality}). Once these are established, the proofs of Lemma \ref{flare}, Proposition \ref{conjflare}, Proposition \ref{strictflare} and Proposition \ref{cbhfc} can be very easily derived from the proofs of \cite{Gh-23}.

\subsubsection{\bf{Electrified metric, length bounds and growth of legality under iteration. }}

%For $\phi$ satisfying conditions in Section \ref{sec:assump} 
We electrify $\F$ with respect to the collection $\mathcal{K}$ to obtain a strongly relatively hyperbolic group $\widehat{\F}$. From now on, we shall use the notation ${\mid \cdot \mid}_{el}$ to denote the length of a word of $\F$ in this electrified metric. The notation ${\vert\vert \alpha \vert\vert}_{el}$ will denote the length of the shortest representative of the conjugacy class $\alpha$ in the  electrified  metric space $(\widehat{\F},{\mid \cdot \mid}_{el})$.

Consider the universal cover $\widetilde{G}$ of $G$. Let  $G_\mathcal{K}$ denote the Stallings graph  whose components are the core graph $A_1 \cup \ldots \cup A_p$. For each $j$ we lift the maps $\iota : A_j \to G $ to the universal cover and   electrify all copies of translates of the images of these lifts. Since $\mathcal{K}$ is malnormal, the resulting  electrified  metric space $\big(\widetilde{G}, {|\cdot|}^{\widetilde{G}}_{el}\big)$ is strongly relatively hyperbolic. We also note that there is an $\F$-equivariant quasi-isometry between the  electrified  metric spaces $\big(\widehat{\F},{\mid \cdot \mid}_{el} \big)$  and $\big(\widetilde{G}, {|\cdot|}^{\widetilde{G}}_{el}\big)$. It is this quasi-isometry that makes the next lemma work, where we connect the two different length computations for a conjugacy class $\alpha$: one comes from the  electrified  metric described above and the other, $L_{\mathcal{K}}(\alpha)$,  comes from the definition of legality ratio, where we look at $\alpha$ as a circuit in $G$. The lemma shows that the two notions of length closely track one another and allows us to use the train-track machinery to prove flaring. 
%In the proof we use $|G|$ to denote the total number of edges in the graph $G$.
In the following lemma (and later on in the paper) we use $g: G'\to G'$ to denote a CT map for $\phi^{-1}$.
\begin{lemma}[Length comparison]
   \label{comparison}
Let $\phi\in \out$ be as in section \ref{sec:assump} and $\mathcal{K}$ an admissible subgroup system for $\phi$. If $\alpha$ is any circuit in $G$ which is not carried by $\mathcal{K}$ then
 there exists some $J>0$, independent of $\alpha$, such that
 \[ J \geq  L_{\mathcal{K}}(\alpha)/{||\alpha||}_{el}\geq \frac{1}{J}\,\,\text{and}\, \, \, J\geq  L^-_{\mathcal{K}}(\alpha)/{||\alpha||}_{el}\geq \frac{1}{J}.\]
\end{lemma}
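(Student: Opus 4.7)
The plan is to prove the two inequalities separately by passing to the universal cover $\widetilde G$ via the $\F$-equivariant quasi-isometry between $(\widehat\F, |\cdot|_{el})$ and $(\widetilde G, |\cdot|^{\widetilde G}_{el})$ in which translates of lifts of the Stallings subgraph $\widetilde{G_\mathcal{K}}$ have been coned off; it suffices to compare $L_\mathcal{K}(\alpha)$ with the length in $\widetilde G$ of a geodesic representative of the cyclic axis of $\alpha$, since this differs from $\|\alpha\|_{el}$ only by the uniform quasi-isometry constants. Realize $\alpha$ as a circuit in $G$, lift it to a periodic bi-infinite path $\tilde\alpha$ in $\widetilde G$, and decompose $\tilde\alpha$ along its intersections with the translates $\{g\cdot\widetilde{G_\mathcal{K}} : g \in \F\}$ into maximal arcs that either lie inside a single translate or are disjoint from every translate except possibly at endpoints.

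For the upper bound $L_\mathcal{K}(\alpha) \leq J\,\|\alpha\|_{el}$, I would work term-by-term through the CT decomposition $\alpha = \beta_1 \cdots \beta_k$. A term $\beta_{s_i}$ contributes to $L_\mathcal{K}(\alpha)$ only through those components of its height-$s_i$ decomposition that do not lift to $G_\mathcal{K}$; each such component, when lifted to $\widetilde G$, is not contained in any single translate of $\widetilde{G_\mathcal{K}}$ and therefore must contain at least one edge which either lies entirely outside the cone-off or connects two different translates. Because each $\beta_{s_i}$ is either contained in an irreducible stratum or in the union of an EG stratum with an enveloping zero stratum, the constraints of Definition \ref{CT} (contractibility of zero strata, finiteness of indivisible Nielsen paths and exceptional paths) bound the combinatorial type of any non-lifting component by finitely many possibilities, so one obtains a uniform ratio between its $G$-length and the electrocuted length it contributes. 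Summation then yields the desired inequality.

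For the lower bound $\|\alpha\|_{el} \leq J \cdot L_\mathcal{K}(\alpha)$, the electrocuted geodesic accounts for (i) edges of $\tilde\alpha$ lying outside every translate of $\widetilde{G_\mathcal{K}}$ and (ii) a bounded cost per passage through a Stallings translate. Each edge of type (i) sits inside some CT component that fails to lift to $G_\mathcal{K}$ and is thus counted by $L_\mathcal{K}(\alpha)$; the number of passages in (ii) is controlled using malnormality of $\mathcal{K}$, which prevents $\tilde\alpha$ from re-entering the same translate without traversing a uniform amount of non-electrocuted distance. The main obstacle is precisely the upper-bound step: one must rule out a pathological component that fails to lift to $G_\mathcal{K}$ only because of a tiny excursion outside a Stallings translate while being otherwise extremely long inside one. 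This is where the combination of malnormality of $\mathcal{K}$ and the rigid CT structure becomes essential: malnormality, via the Stallings graph's being an immersion with mutually inequivalent components, together with bounded cancellation (governed by $BCC(f)$) forces any such long "mostly-inside" component to in fact decompose further at the Stallings-compatible boundary, so that the non-lifting pieces are quantitatively close to their non-electrocuted footprint. Assembling these observations produces a single constant $J$ working in both directions.
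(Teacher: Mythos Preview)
Your setup and the lower-bound direction ($\|\alpha\|_{el} \leq J\, L_\mathcal{K}(\alpha)$) are essentially what the paper does: lift the circuit, chop it along its passages through translates of the Stallings graph, and count. That part is fine.

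The upper bound $L_\mathcal{K}(\alpha) \leq J\,\|\alpha\|_{el}$ has a real gap. Your claim that the CT constraints ``bound the combinatorial type of any non-lifting component by finitely many possibilities'' is false: a height-$r$ component in the decomposition $d^r_\alpha$ is just a height-$r$ subpath, and these can be arbitrarily long. You correctly identify the danger in your last paragraph---a component that fails to lift only because of a single short excursion but is otherwise very long inside a Stallings translate---and this is precisely the scenario your earlier argument does not exclude. Your proposed fix does not work either: $BCC(f)$ bounds cancellation under applications of $f$ and is irrelevant here (no $f$ is being applied), and ``decompose further at the Stallings-compatible boundary'' is not available, since the decomposition $d^r_\alpha$ is dictated by the height filtration, not by liftability to $G_\mathcal{K}$.

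The paper (following Lemma~3.7 of \cite{Gh-23}) handles this direction by starting from the \emph{other} side. Take a cyclically reduced word $\omega$ realizing $\|\alpha\|_{el}$, write the electrocuted geodesic from $1$ to $\omega$ in $\widehat\F$ as an alternation of coset pieces (each of electrocuted length $1$) and pieces outside all cosets, then push this into $\widetilde G$ and tighten to recover the circuit $\alpha$. The coset pieces become paths that lift to $G_\mathcal{K}$; the outside pieces have total length controlled by $\|\alpha\|_{el}$; and the cancellation that occurs during tightening is uniformly bounded by the quasi-isometry constants between $\widehat\F$ and $\widetilde G$, not by $BCC(f)$. After tightening, whatever fails to lift in $\alpha$ sits inside what survives of the outside pieces, giving $L_\mathcal{K}(\alpha) \leq J\,\|\alpha\|_{el}$. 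By starting from the electrocuted geodesic, the pathological ``mostly-inside'' component you worry about is never produced in the first place.
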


\begin{proof}
%\tr{Change presentation / english in review}

Recall that $L_\mathcal{K}(\alpha)$ is obtained by adding in the maximal decomposition of $\alpha$, the lengths of subsegments of $\alpha$ which do not lift to $G_\mathcal{K}$. If $\alpha$ is not carried by $\mathcal{K}$,  then $L_\mathcal{K}(\alpha) \geq 1$.

The proof is almost identical to the  proof of \cite[Lemma 3.7]{Gh-23} only difference being how we calculate $L_\mathcal{K}(\alpha)$. In \cite[Lemma 3.7]{Gh-23} $L_\mathcal{K}(\alpha)$ was calculated when $\mathcal{L}^+_\mathcal{K}(\phi)$ had just one element and we are extending it to the case where $\mathcal{L}^+_\mathcal{K}(\phi)$ can have finitely many elements.   The idea is to decompose a lift of a maximal (in the above sense) $\alpha$ in the  electrified  metric space $\widetilde G$ into concatenation of segments that are outside of cosets and that are geodesics of length one connecting two points inside the copy of a
coset through the  conepoint. The number of both types of segments is bounded and this number bounds the  electrified lengths from above.

The other direction requires taking any cyclically reduced word  $\omega$ in  $\mathbb F \setminus \bigcup \mathcal K_i$, and decomposing the path between 1 and $\omega$ in $G$ into segments as above. In this case, as lifts of some segments might be included in $G_{\mathcal K}$ and hence might get shorter after tightening in $G$;   there will  be cancellations between some of those subsequent segments.  This bounds  $L_\mathcal{K}$ by the  electrified  length from below. This gives us $J_1 > 0$, which satisfies the first inequality.

Similarly we can find $J_2 > 0$ such that the second inequality is satisfied. Now take $J = \text{max }\{J_1, J_2\}$ to complete the proof. 

\end{proof}
Following lemma establishes the fact that for certain classes of circuits $\beta$, the number $L_{leg}(\beta)$ will be a significant proportion of $L_{\mathcal{K}}(\beta)$, after perhaps iterating by a uniformly bounded exponent.

\begin{lemma}[Legality growth]
\label{legality}
Let $\phi\in \out$ be as in section \ref{sec:assump} and $\mathcal{K}$ an admissible subgroup system for $\phi$. Then there exists numbers  $\epsilon >0$ and $N_0>0$ such that for every circuit  $\beta$ in $G$ such that $\beta\in V^+ $ and $\beta\notin V^-$,  we have $LEG_\mathcal{K}(f^n_\#(\beta)) \geq \epsilon$ for all $n \geq N_0$.\\
Similarly switching the roles of $V^+$ and $V^-$, we have $LEG^-_\mathcal{K}(g^{n}_\#(\beta)) \geq \epsilon$ for all $n \geq N_0$.
\end{lemma}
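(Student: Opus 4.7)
My plan is to combine the bounded cancellation principle for the EG stratum that ``attracts'' $\beta$ with an upper bound on the growth of the $\mathcal{K}$-length of the iterates $f^n_\#(\beta)$, and then take ratios.

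\textbf{Step 1 (Lower bound on the numerator).} Since $\beta \in V^+$, there is a dual pair $\Lambda^\pm \in \mathcal{L}_\mathcal{K}^\pm(\phi)$, of height $r$ say, and a subpath $\sigma$ of $\beta$ that is an $r$-legal generic leaf segment of $\Lambda^+$ with $|\sigma|_{H_r}\geq 2C$. Since $2C$ exceeds the critical constant for $H_r$, the bounded cancellation lemma of \cite{BFH-97} produces a constant $\mu_r>0$, depending only on $f$ and $H_r$, such that the $r$-legal image of the central part of $\sigma$ inside $f^n_\#(\beta)$ has $H_r$-length at least $\mu_r \lambda_r^n \cdot 2C$. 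In particular, $\sigma$ persists as a contributor to $L_r$ in the CT decomposition of $f^n_\#(\beta)$, and thus
\[
L_{leg}(f^n_\#(\beta))\ \geq\ L_r(f^n_\#(\beta))\ \geq\ 2C\mu_r\lambda_r^n.
\]

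\textbf{Step 2 (Upper bound on the denominator).} To bound $L_\mathcal{K}(f^n_\#(\beta))$ from above, I apply Lemma~\ref{comparison} to $f^n_\#(\beta)$: up to the uniform constant $J$, the number $L_\mathcal{K}(f^n_\#(\beta))$ is comparable to the electric length $\|f^n_\#(\beta)\|_{el}$. Viewing $\beta$ via its complete splitting, each factor grows at the exponential rate of its top stratum, so $\|f^n_\#(\beta)\|_{el}$ is dominated by $C_\beta \cdot \lambda^n$, where $\lambda$ is the Perron--Frobenius eigenvalue of the highest EG stratum visited by $\beta$ and $C_\beta$ is a multiple of $|\beta|$. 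Admissibility of $\mathcal{K}$, together with $\beta\in V^+$, forces the top EG stratum of $\beta$ that is not carried by $\mathcal{K}$ to contribute to $L_{leg}$ as well; combined with the hypothesis $\beta\notin V^-$, which prevents the repelling-direction leaf segments from secretly inflating $L_\mathcal{K}$ without contributing to $L_{leg}$, this lets me identify the relevant top exponent with $\lambda_r$.

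\textbf{Step 3 (Ratio).} Dividing the bound of Step 1 by that of Step 2 yields
\[
LEG_\mathcal{K}(f^n_\#(\beta))\ \geq\ \frac{2C\mu_r\lambda_r^n}{JC_\beta\,\lambda_r^n}\ =\ \frac{2C\mu_r}{JC_\beta},
\]
and I then absorb the $C_\beta$-dependence by noting that for $n \geq N_0$ large enough the exponential growth of the surviving central segment dwarfs the initial length $|\beta|$ of the ``pre-leaf'' pieces, so a uniform $\epsilon>0$ can be extracted. The main obstacle I expect is the implicit comparison of growth rates in Step 2: ensuring that no stratum of height $>r$ visited by $\beta$ contributes to $L_\mathcal{K}$ without also contributing to $L_{leg}$. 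This is precisely where the admissibility axioms (in particular item (5) together with the generalized weak attraction theorem \ref{modwat}) are used---every piece of $f^n_\#(\beta)$ that is not carried by $\mathcal{K}$ is, for $n$ large, either part of an attracting-lamination leaf segment of length $\geq 2C$, or a bounded-length ``glue'' that becomes negligible against the exponentially-growing legal core.
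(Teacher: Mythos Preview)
Your direct-estimate approach has a genuine uniformity gap that the argument as written does not close. The lemma requires a single $\epsilon>0$ and a single $N_0$ valid for \emph{every} circuit $\beta$ satisfying the hypotheses, yet your final bound in Step~3 is $\frac{2C\mu_r}{J C_\beta}$ with $C_\beta$ proportional to $|\beta|$. Since $|\beta|$ is unbounded over the class of admissible $\beta$, this does not give a uniform $\epsilon$. Your attempt to absorb $C_\beta$ by taking $n\geq N_0$ ``large enough'' is circular: after dividing, both numerator and denominator grow at the same exponential rate $\lambda_r^n$, so the ratio you have written is actually independent of $n$, and no choice of $N_0$ (independent of $\beta$) rescues it. A related problem is Step~2: an arbitrary circuit $\beta$ need not have a complete splitting, and even after iterating, the ``top EG stratum visited by $\beta$'' may have Perron--Frobenius eigenvalue strictly larger than $\lambda_r$ while $\beta$ contains no height-$s$ leaf segment of length $\geq 2C$ there. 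Your assertion that admissibility forces the top stratum to contribute to $L_{leg}$ is exactly what is at stake and is not justified by axioms (4)--(5) alone.

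The paper sidesteps all of this by arguing by contradiction through a weak-limit compactness argument, in the style of \cite[Lemma~5.6]{BFH-97}. If the conclusion fails, one extracts a sequence $\alpha_j$ with $L_\mathcal{K}(\alpha_j)\to\infty$ and $LEG_\mathcal{K}(f^{n_j}_\#(\alpha_j))\to 0$, then selects subpaths $\delta_j\notin V^-$ with $L_\mathcal{K}(\delta_j)\to\infty$ and zero legality after iteration. A weak limit $\ell$ of the $\delta_j$ is produced (via Lemma~\ref{NAS}(5)) that is not carried by the non-attracting system of some $\Lambda^+\in\mathcal{L}^+_\mathcal{K}(\phi)$ and is not in $V^-$; the weak attraction theorem then forces $f^{n_j}_\#(\ell)\in V^+$ for large $j$, and openness of $V^+$ propagates this to $f^{n_j}_\#(\delta_j)$, contradicting the choice of $\delta_j$. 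The point is that uniformity in $\beta$ comes for free from the contradiction hypothesis, which is precisely what your direct approach cannot supply.
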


\begin{proof} Supposing that the conclusion is false, we give an argument by contradiction. Suppose we have a sequence of integers $n_j \to \infty$ and corresponding sequence of circuits $\alpha_j$ satisfying the hypothesis and such that $LEG_\mathcal{K}(f^{n_j}_\#(\alpha_j)) \to 0$.  Since $\alpha_j \in V^+$ we have that $LEG_\mathcal{K}(\alpha_j) \neq 0$ for every $j$. Therefore we may assume $L_{\mathcal{K}}(\alpha_j) \to \infty$. Since $LEG_\mathcal{K}(f^{n_j}_\#(\alpha_j)) \to 0$,  $\alpha_j$'s do not contain sufficiently many long subpaths which are generic leaf segments of elements of $\mathcal{L}^+_\mathcal{K}(\phi)$ for the legality to grow under iterates of $f_\#$.  Therefore as $j\to \infty$ subpaths of $\alpha_j$ which are not generic leaf segments of $\mathcal{L}^+_\mathcal{K}(\phi)$ become arbitrarily large since $L_\mathcal{K}(\alpha_j)\to \infty$. Now we choose subpaths $\delta_j$ of $\alpha_j$ such that the following hold:
 \begin{enumerate}
     \item $\delta_j \notin V^-$ and $L_\mathcal{K}(\delta_j) \to \infty$.
     \item  $LEG_\mathcal{K}(f^{n_j}_\#(\delta_j)) = 0$.

 \end{enumerate}

 Since $L_\mathcal{K}(\delta_j)\to \infty$ (hence $|\delta_j| \to \infty$)  we may assume that $\delta_j$ is a circuit for all sufficiently large $j$. This tells us $\delta_j$'s are not carried by $\mathcal{K}$ and so each $\delta_j$ is weakly attracted to some element of $\mathcal{L}^+_\mathcal{K}(\phi)$.  Since there are only finitely many elements in $\mathcal{L}^+_\mathcal{K}(\phi)$, applying item (2) of Lemma \ref{WAT} and passing to a subsequence if necessary we may assume that $\delta_j$'s are not carried by the non-attracting subgroup system corresponding to some fixed attracting lamination  $\Lambda^+\in\mathcal{L}^+_\mathcal{K}(\phi)$.

 Then by the Lemma \ref{NAS} (5) there exists a weak limit $\ell$ of the $\delta_j$'s  such that  $\ell$ is not carried by $\mathcal{A}_{na}(\Lambda^+)$. Since $\delta_j \notin V^-$ we have $\ell \notin V^-$, as $V^-$ is an open set. This implies that $\ell$ is not in the attracting neighborhood of the dual lamination $\Lambda^-$ of $\Lambda^+$, which is contained in $V^-$.   Lemma \ref{WAT} applied to the dual lamination pair  $\Lambda^+, \Lambda^-$   then implies that $f^{n_j}_\#(\ell) \in V^+$ for all $j$ sufficiently large. Since $V^+$ is an open set, there exists some $J>0$ such that $f^{n_j}_\#(\delta_j)\in V^+$ for all $j\geq J$.  This  violates item (2) above.

 A similar argument gives us a threshold exponent for $\phi^{-1}$ and we let $N_0$ be the maximum of these two thresholds.
\end{proof}

\begin{proposition}[Exponent control]
\label{flare}
Let $\phi\in \out$ and $\mathcal{K}$  satisfy the hypothesis of Lemma \ref{legality}.Then  for every $ A>0$, there exists $N_1 > 0$ such that for every circuit  $\beta$ in $G$ with the property that  $\beta\in V^+$, $\beta\notin V^-$ we  have $L_{\mathcal{K}}(f^n_\#(\beta)) \geq A . L_{\mathcal{K}}(\beta)$ for all $n \geq N_1$.

Similarly, switching the roles of $V^+$ and $V^-$, we have $L^-_{\mathcal{K}}(g^n_\#(\beta)) \geq A . L^-_{\mathcal{K}}(\beta)$ for all $n \geq N_1$.
\end{proposition}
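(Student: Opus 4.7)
My plan is to combine the uniform legality floor from Lemma~\ref{legality} with the bounded-cancellation mechanism encoded in the critical constant to propagate exponential growth of $L_{\mathcal{K}}$, and then transport this growth back from $f^{N_0}_\#(\beta)$ to $\beta$ using that $\phi$ descends to a quasi-isometry of the electrocuted Cayley graph.

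First, I apply Lemma~\ref{legality} at the exponent $N_0$: since $\beta\in V^+$ and $\beta\notin V^-$,
\[ L_{leg}(f^{N_0}_\#(\beta)) \;\geq\; \epsilon\,L_{\mathcal{K}}(f^{N_0}_\#(\beta)).\]
Unpacking the definition of $LEG_{\mathcal{K}}$, this exhibits, in a maximal decomposition of $f^{N_0}_\#(\beta)$, a family of $H_{r_i}$-legal generic-leaf subpaths $\alpha_i$ of some $\Lambda^+\in\mathcal{L}^+_{\mathcal{K}}(\phi)$, each of length $\geq 2C$, with $\sum_i |\alpha_i|_{H_{r_i}} = L_{leg}(f^{N_0}_\#(\beta))$.

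Second, let $\lambda = \min_r \lambda_r > 1$ range over Perron--Frobenius eigenvalues of the EG strata supporting elements of $\mathcal{L}^+_{\mathcal{K}}(\phi)$. Since $2C$ exceeds the critical constant of each such stratum, the bounded cancellation principle recalled in Section~\ref{sec:assump} guarantees that under $f^m_\#$ each $\alpha_i$ survives as a legal subpath of $f^{N_0+m}_\#(\beta)$ of $H_{r_i}$-length at least $\mu\lambda^m|\alpha_i|_{H_{r_i}}$. Because generic leaves of $\Lambda^+\in\mathcal{L}^+_{\mathcal{K}}(\phi)$ are, by definition of $\mathcal{L}^+_{\mathcal{K}}$, not carried by $\mathcal{K}$, the images $f^m_\#(\alpha_i)$ contribute to $L_{\mathcal{K}}(f^{N_0+m}_\#(\beta))$; summing gives
\[ L_{\mathcal{K}}(f^{N_0+m}_\#(\beta)) \;\geq\; \mu\lambda^m L_{leg}(f^{N_0}_\#(\beta)) \;\geq\; \mu\,\epsilon\,\lambda^m\, L_{\mathcal{K}}(f^{N_0}_\#(\beta)).\]

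Third, I compare $L_{\mathcal{K}}(f^{N_0}_\#(\beta))$ with $L_{\mathcal{K}}(\beta)$. Because $\phi$ preserves each $K_s$ up to conjugacy (property (3) of admissibility), the action of $\phi$ descends to a quasi-isometry of $(\widehat{\F},|\cdot|_{el})$, so ${||f^{N_0}_\#(\beta)||}_{el} \geq K_{N_0}^{-1}\,{||\beta||}_{el}$ for some $K_{N_0}$ depending only on $N_0$. As $\beta\in V^+$, neither $\beta$ nor $f^{N_0}_\#(\beta)$ is carried by $\mathcal{K}$ (see the remark following Theorem~\ref{modwat}), so Lemma~\ref{comparison} applies to both, giving $L_{\mathcal{K}}(f^{N_0}_\#(\beta)) \geq c_0\, L_{\mathcal{K}}(\beta)$ for a universal constant $c_0 = c_0(N_0) > 0$. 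Finally, choose $m$ so that $\mu\,\epsilon\,c_0\,\lambda^m \geq A$ and set $N_1 = N_0 + m$; the same chain of inequalities applied with exponent $n-N_0$ in place of $m$ yields $L_{\mathcal{K}}(f^n_\#(\beta)) \geq A\,L_{\mathcal{K}}(\beta)$ for every $n \geq N_1$.

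The main delicacy is the third step: we cannot allow $\phi^{N_0}$ to collapse $L_{\mathcal{K}}$, and it is precisely the $\phi$-invariance of $\mathcal{K}$ built into admissibility that routes this through the quasi-isometric comparison of Lemma~\ref{comparison}. Steps one and two are essentially bookkeeping on top of Lemma~\ref{legality} and the critical-constant estimate.
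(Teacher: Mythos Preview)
Your argument is correct and follows essentially the same three-step skeleton as the paper: apply Lemma~\ref{legality} at exponent $N_0$, propagate exponential growth via the critical-constant estimate, and then absorb the fixed $N_0$-iterate by a uniform comparison of $L_{\mathcal K}(f^{N_0}_\#(\beta))$ with $L_{\mathcal K}(\beta)$. The only point of divergence is in that last comparison: the paper dispatches it in one clause by appealing directly to the bounded cancellation property (since $N_0$ is fixed, $f^{N_0}$ is a fixed Lipschitz map and bounded cancellation controls how much $L_{\mathcal K}$ can drop), whereas you route the comparison through the electrocuted metric via Lemma~\ref{comparison} and the fact that $\phi$ descends to a quasi-isometry of $(\widehat{\F},|\cdot|_{el})$. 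Both arguments are valid; yours is slightly more explicit at the cost of an extra detour, while the paper's is terser but leaves the reader to unpack why bounded cancellation suffices here.
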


\begin{proof}
 By Proposition \ref{legality}, there exists  $N_0$ such that for any circuit $\beta$ satisfying the hypothesis we have $LEG_\mathcal{K}(f^n_\#(\beta)) \geq \epsilon$ for all $n \geq N_0$.  Let $\alpha = f^{N_0}_\# (\beta)$. By taking a decomposition of  $\alpha$ as in the
 definition of legality, we obtain $L_{leg}(\alpha) \geq \epsilon L_{\mathcal{K}}(\alpha)$. If $\lambda$ is the minimum of the stretch factors corresponding to the exponentially growing strata of $f$, we get $$L_{\mathcal{K}}(f^t_\#(\alpha)) \geq L_{leg}( f^t_\#(\alpha)) \geq D \lambda^t L_{leg}(\alpha) \geq D\lambda^t \epsilon . L_{\mathcal{K}}(\alpha)$$ for some constant $0< D\leq 1$ arising out of  the critical constant (see the role of $\mu$ in discussion after Definition \ref{critical}).  Since $N_0$ is fixed, we may choose $N'_1$ large enough, independent of $\beta$ (due to the bounded cancellation property),  such that $D\lambda^{N'_1} \epsilon L_\mathcal{K}(\alpha) \geq A L_{\mathcal{K}}(\beta)$. The result then follows for all $n \geq N'_1$.

 By a symmetric argument, we obtain $N_1''$ satisfying the inequality for $L^-_{\mathcal{K}}(g^n_\#(\beta)) \geq A . L^-_{\mathcal{K}}(\beta)$ for all $n\geq N_1''$. Now take $N_1$ to be the maximum of $N_1', N_1''$ to finish the proof. 
 \end{proof}

%\textcolor{red}{insert discussion on electrocution here.}

\subsubsection{\bf{Flaring conditions.}}
In this section we will extend the flaring conditions known  for fully irreducible automorphisms (conjugacy flaring and hallways flaring) to those of exponentially growing ones relative to an admissible subgroup system. These results will lead us to a form of bounded coset penetration property needed in relative hyperbolicity called \emph{cone bounded hallways flare condition}.

\begin{proposition}[Conjugacy flaring]
\label{conjflare} Let $\phi\in \out$ be as in section \ref{sec:assump} and $\mathcal{K}$ an admissible subgroup system for $\phi$. 
 There exists some $N_2>0$ such that for every conjugacy class $\alpha$ not carried by $\mathcal{K}$, we have
  $$3{||\alpha||}_{el} \leq \mathsf{max} \{{||\phi^m_\#(\alpha)||}_{el}, {||\phi^{-m}_\#(\alpha)||}_{el}\} $$
 for every $m\geq N_2$.
\end{proposition}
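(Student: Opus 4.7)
The plan is to split into two cases according to the generalized weak attraction theorem (Theorem~\ref{modwat}) applied to the given conjugacy class $\alpha$: either $\alpha \in V^-$, or $\phi^m_\#(\alpha) \in V^+$ for all $m \geq M$, where $M$ is the uniform constant supplied by that theorem (the remaining alternative, that $\alpha$ is carried by $\mathcal{K}$, is excluded by hypothesis). In the first case I will drive $||\phi^{-m}_\#(\alpha)||_{el}$ upward via the $\phi^{-1}$ analogue of Proposition~\ref{flare}, while in the second I will drive $||\phi^{m}_\#(\alpha)||_{el}$ upward after first iterating by $\phi^M$ to land inside $V^+$.

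In Case~1 ($\alpha \in V^-$), the disjointness $V^+ \cap V^- = \emptyset$ also gives $\alpha \notin V^+$. The admissibility axioms for $\mathcal{K}$ are symmetric under $\phi \leftrightarrow \phi^{-1}$ (swapping the roles of $V^+$ and $V^-$), so Proposition~\ref{flare} applies to $\phi^{-1}$ with $\beta = \alpha$: for every prescribed $A > 0$, there is $N_1$ such that $L_{\mathcal{K}}(\phi^{-n}_\#(\alpha)) \geq A\, L_{\mathcal{K}}(\alpha)$ whenever $n \geq N_1$. Feeding this into Lemma~\ref{comparison} on both sides converts the $L_{\mathcal{K}}$ inequality into $||\phi^{-n}_\#(\alpha)||_{el} \geq (A/J^{2})\, ||\alpha||_{el}$; choosing $A > 3J^{2}$ closes this case.

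In Case~2, set $\beta := \phi^M_\#(\alpha)$. Since $\beta \in V^+$ and $V^+ \cap V^- = \emptyset$ forces $\beta \notin V^-$, Proposition~\ref{flare} applies to $\beta$ and produces $L_{\mathcal{K}}(\phi^{n+M}_\#(\alpha)) \geq A\, L_{\mathcal{K}}(\phi^M_\#(\alpha))$ for $n \geq N_1$. The remaining work is to compare $L_{\mathcal{K}}(\phi^M_\#(\alpha))$ back to $L_{\mathcal{K}}(\alpha)$: because $\phi^M$ preserves the conjugacy classes in $\mathcal{K}$, it induces a quasi-isometry of the coned-off Cayley graph $\widehat{\F}$, from which one extracts a uniform multiplicative lower bound $||\phi^M_\#(\alpha)||_{el} \geq c_M\, ||\alpha||_{el}$ valid once $||\alpha||_{el}$ is sufficiently large, the additive slack of the quasi-isometry being absorbed by the uniform lower bound $L_{\mathcal{K}}(\alpha) \geq 1$ that holds for any conjugacy class not carried by $\mathcal{K}$. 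Chaining with Lemma~\ref{comparison} yields $||\phi^{n+M}_\#(\alpha)||_{el} \geq (A c_M / J^{2})\, ||\alpha||_{el}$; taking $A$ large enough that $A c_M / J^{2} \geq 3$ closes this case. Setting $N_{2} := N_{1}+M$ (with $A$ and $N_{1}$ chosen to work in both cases simultaneously) finishes the proof.

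The principal technical nuisance is the constant bookkeeping in Case~2: the additive error introduced by the quasi-isometry $\phi^M$ could in principle swallow the multiplicative factor at short length scales. This is handled by using the uniform lower bound $L_{\mathcal{K}}(\alpha) \geq 1$ for classes not carried by $\mathcal{K}$, together with the fact that Proposition~\ref{flare} already delivers $L_{\mathcal{K}}(\phi^{n+M}_\#(\alpha)) \geq A$ in absolute terms, to dispose of the short-$\alpha$ regime before the clean multiplicative comparison takes over. Everything else is a routine chaining of the exponent control proposition and the length comparison lemma.
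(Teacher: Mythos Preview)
Your proof is correct and follows essentially the same route as the paper: split via the generalized weak attraction theorem into the cases $\alpha\in V^-$ versus $\phi^M_\#(\alpha)\in V^+$, invoke the exponent control of Proposition~\ref{flare} in the appropriate direction (using the $\phi\leftrightarrow\phi^{-1}$ symmetry of the admissibility axioms, which the paper also uses implicitly), and then absorb the fixed $M$-step offset by a uniform multiplicative comparison. The only cosmetic difference is in this last step: the paper bounds the loss incurred over the first $N_0$ iterates via the Lipschitz inequality $L_\mathcal{K}(\phi_\#(\beta))\le \lambda_+\, L_\mathcal{K}(\beta)$ coming from the maximal Perron--Frobenius eigenvalue (yielding a constant $J_1$ with $\lVert\phi^{\pm N_0}_\#(\alpha)\rVert_{el}\ge \lVert\alpha\rVert_{el}/J_1$), whereas you obtain the analogous constant $c_M$ from the fact that a lift of $\phi^M$ is a quasi-isometry of $\widehat{\F}$; both produce a uniform multiplicative constant depending only on $M$ and the train-track data, so the arguments are interchangeable. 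Your additive-error bookkeeping is in fact unnecessary once you pass to conjugacy (translation) lengths, but the workaround you give via $L_\mathcal{K}(\alpha)\ge 1$ is valid in any case.
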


\begin{proof}
The idea of the proof is similar to that of \cite[Proposition 3.7]{Gh-23}, we will  summarize the argument here highlighting the differences.

We observe that since $\alpha$ is not carried by $\mathcal{K}$, by Theorem \ref{modwat}  either $\alpha\in V^-$ or there exists $M>0$ such that $\phi^M_\#(\alpha)\in V^+$.  Since $V^+, V^-$ are disjoint sets, we have either $\alpha\in V^- $ and $\alpha \notin V^+$ or $\phi^M_\#(\alpha)\in V^+$ and $ \phi^{-M}_\#(\alpha)\notin V^-$.
 Let $N'_0,  \epsilon$ be as in output of Lemma \ref{legality}.  We have either ${LEG}_\mathcal{K}(\phi^{M+N'_0}_\#(\alpha)) \geq \epsilon$ or ${LEG}^-_\mathcal{K}(\phi^{-N'_0}_\#(\alpha)) \geq \epsilon$. Let $N_{0}:=M+N'_0$. Then for all $m\geq N_0 $ have either ${LEG}_\mathcal{K}(\phi^{m}_\#(\alpha)) \geq \epsilon$ or ${LEG}^-_\mathcal{K}(\phi^{-m}_\#(\alpha)) \geq \epsilon$.

 Let $\lambda_+$ (respectively $\lambda_-$) be maximum of stretch factors associated to the EG strata for elements of $\mathcal{L}^+_\mathcal{K}(\phi)$ (respectively $\mathcal{L}^-_\mathcal{K}(\phi)$). Every conjugacy class that is not carried by $\mathcal{K}$ is stretched at most by a factor of  $\lambda_+$ under $\phi$ and by a factor of $\lambda_-$ under $\phi^{-1}$. So for every circuit $\alpha$ not carried by $\mathcal{K}$, we get
  $L_\mathcal{K}(\phi^{N_0}_\#(\alpha)) \leq \lambda^{N_0}_+ L_\mathcal{K}(\alpha)$, which implies $L_\mathcal{K}(\alpha) \leq \lambda_+^{N_0} L_\mathcal{K}(\phi^{-N_0}_\#(\alpha))$ by replacing $\alpha$ with $\phi^{-N_0}_\#(\alpha)$.  By a symmetric argument, switching the roles of $\phi$ and $\phi^{-1}$, we will get an inequality involving $\lambda_-$. Use Lemma \ref{comparison} to choose some number $J_1>0$ such that we have
  ${||\phi^{N_0}_\#(\alpha)||}_{el} \geq {||\alpha||}_{el}/J_1$ and
  ${||\phi^{-N_0}_\#(\alpha)||}_{el} \geq {||\alpha||}_{el}/J_1$ for every conjugacy class $\alpha$ not carried by $\mathcal{K}$.  Note that $J_1$ is chosen arbitrarily here, only to track the ratio of the numbers above and it depends only on $N_0, \lambda_+, \lambda_-$.
  
  Now we repeat the argument in \cite[Proposition 3.7]{Gh-23} to complete the proof as follows.
  We apply Lemma \ref{comparison} again to compare  $L_\mathcal{K}(\phi^{N_0}_\#(\alpha)) $ with ${||\phi^{N_0}_\#(\alpha)||}_{el}$:
  for some constant $J_2$ and every conjugacy class $\alpha$
  as above,  either  $J_2 \geq L_\mathcal{K}(\phi^{N_0}_\#(\alpha))/{||\phi^{N_0}_\#(\alpha)||}_{el}\geq \frac{1}{J_2} \text{ or }
  J_2\geq L^-_\mathcal{K}(\phi^{-N_0}_\#(\alpha))/{||\phi^{-N_0}_\#(\alpha)||}_{el} \geq \frac{1}{J_2}$.
Without loss of generality assume that ${LEG}_\mathcal{K}(\phi^m_\#(\alpha)) \geq \epsilon$. Then by applying
  Lemma \ref{flare} with $\epsilon$, $A= 3J_1J_2^2$ and $N_2=N_0+N_1$ for all
  $m\geq N_2$, we have
\begin{equation}
  \begin{split}
  {||\phi^m_\#(\alpha)||}_{el} & \geq  3{||\alpha||}_{el}
   \end{split}
\end{equation}

  %\frac{1}{J_2} L_\mathcal{K}(\phi^m_\#(\alpha)) \\
%  & \geq \frac{1}{J_2}3J_1 J_2^2 L_\mathcal{K}(\phi^{N_0}_\#(\alpha)) \\
 %  & \geq 3J_1J_2 \frac{1}{J_2} {||\phi^{N_0}_\#(\alpha)||}_{el}= 3J_1{||\phi^{N_0}_\#%(\alpha)||}_{el}\\
%   & \geq 3J_1\frac{1}{J_1}{||\alpha||}_{el} =

\end{proof}

\begin{proposition}[Hallways flaring]
 \label{strictflare}
Let $\phi\in \out$ be as in section \ref{sec:assump} and $\mathcal{K}$ an admissible subgroup system for $\phi$. 
  Choose a lift $\Phi\in\aut$ of $\phi$.
  Then there exist numbers $N_\Phi >0$ and $L_\Phi >0$ such that for every word $w\in \F \setminus \bigcup\limits_i K_i$
  with ${|w|}_{el} \geq L_\Phi$  we have
  $$2{|w|}_{el} \leq \mathsf{max}\{{|\Phi^n_\#(w)|}_{el}, {|\Phi^{-n}_\#(w)|}_{el}\}$$
  for every $n \geq N_\Phi$.
  Moreover, if $w$ is cyclically reduced then the result holds for all $w$ such that ${|w|}_{el} > 1$.
 \end{proposition}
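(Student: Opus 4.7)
The plan is to reduce this word-length flaring to the already-established conjugacy flaring of Proposition \ref{conjflare}, with the conjugating ``tail'' of a word handled by bounded cancellation together with a path-level analogue of the legality growth argument.

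\textbf{Cyclically reduced case.} For cyclically reduced $w \in \F \setminus \bigcup_i K_i$ with $|w|_{el} > 1$, the electrocuted word length and conjugacy-class length agree up to a universal additive constant: $\|[w]\|_{el} \geq |w|_{el} - c_0$, since in the electrocuted metric a cyclic shift of a cyclically reduced word can save at most a bounded number of edges through cone points. I bootstrap the constant in Proposition \ref{conjflare} from $3$ to arbitrary $3^j$ by iteration: if $\max\{\|\phi^m[w]\|_{el}, \|\phi^{-m}[w]\|_{el}\} \geq 3\|[w]\|_{el}$ for $m \geq N_2$, assume WLOG that $\|\phi^m[w]\|_{el} \geq 3\|[w]\|_{el}$ and reapply the inequality to $\phi^m[w]$; the alternative would force $\|[w]\|_{el} \geq 9\|[w]\|_{el}$, so $\|\phi^{2m}[w]\|_{el} \geq 9\|[w]\|_{el}$. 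Using $|\Phi^n(w)|_{el} \geq \|\phi^n[w]\|_{el}$ and the comparison $\|[w]\|_{el} \geq |w|_{el} - c_0$, choosing $3^j$ large enough depending on $c_0$ gives $\max\{|\Phi^n(w)|_{el}, |\Phi^{-n}(w)|_{el}\} \geq 2|w|_{el}$ for $n = jm \geq N_\Phi$, uniformly in cyclically reduced $w$ with $|w|_{el} > 1$.

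\textbf{General case.} Decompose $w = u \alpha u^{-1}$ as a reduced product with $\alpha$ cyclically reduced, so $|w|_{el} \leq 2|u|_{el} + |\alpha|_{el}$. Bounded cancellation applied at the two internal junctions of $\Phi^n(w) = \Phi^n(u) \cdot \Phi^n(\alpha) \cdot \Phi^n(u)^{-1}$ yields
\[
|\Phi^n(w)|_{el} \geq 2|\Phi^n(u)|_{el} + |\Phi^n(\alpha)|_{el} - 4\, BCC(\Phi^n),
\]
where $BCC(\Phi^n)$ grows at most linearly in $n$. When $|\alpha|_{el}$ is a fixed positive fraction of $|w|_{el}$, the cyclically reduced argument applied to $[\alpha] = [w]$ already delivers the conclusion. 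When $|u|_{el}$ dominates, I run a path-level analogue of Lemma \ref{legality} and Proposition \ref{flare} directly on the finite path $u$ in $G$: the definitions of $LEG_\mathcal{K}$, $L_\mathcal{K}$ and the length comparison Lemma \ref{comparison} extend from circuits to paths with only notational changes, producing exponential growth of $\max\{|\Phi^n(u)|_{el}, |\Phi^{-n}(u)|_{el}\}$. The displayed inequality then transfers this growth to $|\Phi^n(w)|_{el}$ once $L_\Phi$ is large enough to swallow the $O(n)$ bounded-cancellation term.

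\textbf{Main obstacle.} The principal technical point is that Theorem \ref{modwat} is formulated for lines and conjugacy classes, not for finite paths, so in the second case one must first certify that $u$ gets attracted to some lamination in $\mathcal{L}^\pm_{\mathcal{K}}(\phi)$. I would do this by extracting a weak accumulation line $\ell$ of $\{\phi^{n_k}_\#(u)\}$; if $|u|_{el} \geq |w|_{el}/4$ is large (with $|w|_{el} \geq L_\Phi$), then $\ell$ cannot be carried by $\mathcal{K}$ (being carried would trap $u$ in a bounded electrocuted neighborhood of a single $\mathcal{K}$-coset, contradicting the lower bound on $|u|_{el}$). Theorem \ref{modwat} then places $\ell$ in $V^+$ or $V^-$, which is exactly the input required to run the legality-growth estimate on the path $u$ and complete the argument.
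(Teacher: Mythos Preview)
Your cyclically reduced case is fine in spirit and matches the paper's intent. The general case, however, has a genuine gap. Your displayed inequality relies on $BCC(\Phi^n)$ growing at most linearly in $n$, but bounded cancellation for an iterated Lipschitz map degrades like a geometric series in the Lipschitz constant, so $BCC(\Phi^n)$ is in general exponential in $n$. That kills the inequality: the error term $4\,BCC(\Phi^n)$ can swamp the exponential gain from $|\Phi^n(\alpha)|_{el}$ or $|\Phi^n(u)|_{el}$, and no threshold $L_\Phi$ on $|w|_{el}$ alone (independent of $n$) will absorb it. Your proposed ``path-level analogue'' of Lemma~\ref{legality} and Theorem~\ref{modwat} is also not a notational matter: those results are proved for circuits and lines precisely because weak attraction and the nonattracting subgroup system are formulated that way, and a finite subpath $u$ with large $|u|_{el}$ can perfectly well have all its weak limits carried by $\mathcal{K}$ (think of $u$ alternating between cosets of different $K_i$'s).

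The paper's route avoids all of this with a single trick borrowed from \cite[Proposition~3.10]{Gh-23} (and ultimately \cite{BFH-97}): rather than decomposing $w = u\alpha u^{-1}$ and analyzing the tail, one picks a finite set $B$ of basis elements whose conjugacy classes are not carried by $\mathcal{K}$ (coming from $\Phi$-invariant free factors supporting elements of $\mathcal{L}^+_\mathcal{K}(\phi)$), and for non-cyclically-reduced $w$ chooses $b\in B$ so that $bw$ is cyclically reduced and $bw \in \F\setminus\bigcup K_t$. One then applies the cyclically reduced case to $bw$. The point is that the exponent $n$ is \emph{fixed first} by conjugacy flaring; then $D := \max_{b\in B}\{|\Phi^{\pm n}(b)|_{el}\}$ is a finite constant, and $|\Phi^{\pm n}(w)|_{el} \geq |\Phi^{\pm n}(bw)|_{el} - D$ together with $|bw|_{el} \leq |w|_{el}+1$ reduces everything to choosing $L_\Phi$ large relative to the single number $D$. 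No control on tails, no path-level attraction, and no iterated bounded cancellation are needed.
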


\begin{proof}  There are finitely many attracting laminations in $\mathcal{L}^+_\mathcal{K}(\phi)$, none of which are carried by the the subgroup system $\mathcal{K}$. Since $\phi$ leaves the free factor support of the attracting laminations in $\mathcal{L}^+_\mathcal{K}(\phi)$ invariant, the lift $\Phi$ will leave some free factor representing the corresponding free factor support invariant. For cyclically reduced words, the conjugacy flaring (Proposition \ref{conjflare}) already implies the conclusion of this proposition. \\
To deal with words which are not cyclically reduced, let $B$ denote the union of basis of these invariant free factors such that if $b\in B$ then $[b]$ is not carried by $\mathcal{K}$. If $w$ is not cyclically reduced then choose some cyclically reduced $b$ such that $bw\in \F\setminus\cup K_t$. 
Now we  repeat the argument of \cite[Proposition 3.10]{Gh-23} on the word $bw$ to get the conclusion.

\end{proof}

\begin{lemma}\label{conjugatorstretch}
Let $\phi\in \out$ be as in section \ref{sec:assump} and  $\mathcal{K} = \{ [K_1], [K_2], \ldots , [K_p]\}$   an admissible subgroup system for $\phi$. 
If we have a lift $\Phi$ such that $\Phi(K_s)= K_s$  and $\Phi(K_t) = x_t^{-1} K_t x_t$, for some
 $x_t\in \F$ and $s\neq t$, then
 \begin{enumerate}
 	\item $x_t \notin K_s \cup K_t$.
 	 \item ${|\Phi^n_\#(x_t)|}_{el}$ must grow exponentially fast as $n\to \infty$. Moreover $[x_t]$ is not carried by $\mathcal{K}$. 
	\item The  electrified  length ${|x_t \Phi(x_t) \Phi^2(x_t)\ldots \Phi^{n-1}(x_t)|}_{el} \to \infty$ exponentially fast as $n\to \infty$.
\end{enumerate}

\end{lemma}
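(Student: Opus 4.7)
The plan is to prove the three items in an intertwined manner, with (2) playing the central role; (1) will follow immediately from (2), and (3) will be bootstrapped from (2) by induction. The backbone of the argument is the algebraic identity
\[
\Phi^n(K_t) = y_n^{-1} K_t y_n, \qquad y_n := x_t\,\Phi(x_t)\,\Phi^2(x_t)\cdots\Phi^{n-1}(x_t),
\]
which follows by induction from the recursion $y_n = x_t\cdot\Phi(y_{n-1})$ and the hypothesis $\Phi(K_t) = x_t^{-1} K_t x_t$.

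The main algebraic step is to show that the conjugacy class $[x_t]$ is not carried by $\mathcal{K}$. I would argue by contradiction: assume $x_t \in gK_ig^{-1}$ for some $[K_i]\in\mathcal{K}$ and $g\in\F$. Since $\Phi$ preserves each $[K_j]$ as a conjugacy class, each iterate $\Phi^j(x_t)$ also lies in a conjugate of some $K_{j'}$. Combining this with malnormality of $\mathcal{K}$, the uniqueness of malnormal conjugators modulo the subgroup, and the identity above, one can extract the conclusion that some positive power $\Phi^k$ must fix $K_t$ as a subgroup. But then $\Phi^k$ simultaneously fixes $K_s$ and $K_t$, and the conjugacy classes supported by the free product $\langle K_s, K_t\rangle = K_s * K_t$ (which exist because malnormality and $s\neq t$ give $K_s \cap K_t = 1$, and such classes are not carried by $\mathcal{K}$) would be forced to be $\phi$-periodic, contradicting the exponential attraction to some element of $\mathcal{L}^\pm_\mathcal{K}(\phi)$ guaranteed by the admissible hypothesis via Theorem~\ref{modwat}. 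Once $[x_t]$ is known not to be carried by $\mathcal{K}$, Theorem~\ref{modwat} applied to $x_t$ gives $\Phi^M_\#(x_t) \in V^+$ for some $M\geq 0$, and Proposition~\ref{flare} (exponent control), combined with Lemma~\ref{legality} (legality growth) and the length-comparison Lemma~\ref{comparison}, then yields exponential growth of $|\Phi^n_\#(x_t)|_{el}$ as $n\to\infty$, completing~(2). Part~(1) is then immediate, since if $x_t$ lay in $K_s\cup K_t$ its conjugacy class would automatically be carried by $\mathcal{K}$, contradicting the non-carrying conclusion just established.

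For part (3), I would use the recursion $y_n = x_t\cdot \Phi(y_{n-1})$ inductively. The electrocuted triangle inequality gives $|y_n|_{el} \geq |\Phi(y_{n-1})|_{el} - |x_t|_{el}$. The class $[y_{n-1}]$ is itself not carried by $\mathcal{K}$, by repeating the non-carrying argument of the previous paragraph with the lift $\Phi^{n-1}$ in place of $\Phi$ (so that $y_{n-1}$ now plays the role of $x_t$). Hence Proposition~\ref{strictflare} (hallways flaring) applied to $y_{n-1}$ produces $|\Phi(y_{n-1})|_{el} \geq 2|y_{n-1}|_{el}$ once $|y_{n-1}|_{el}$ exceeds the threshold $L_\Phi$, and this propagates to exponential growth of $|y_n|_{el}$ by induction.

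The principal obstacle is the non-carrying of $[x_t]$ by $\mathcal{K}$. This is a genuinely algebraic--dynamical statement: it extracts, from the purely algebraic data that $\Phi$ fixes $K_s$ and moves $K_t$ by conjugation by $x_t$, the conclusion that $x_t$ cannot be absorbed into any conjugate of any member of $\mathcal{K}$. The proof will require carefully interleaving (i) the malnormality and coset-uniqueness properties of $\mathcal{K}$, (ii) the iteration identity for $y_n$, and (iii) the dynamical consequence of the admissible hypothesis that no non-carried conjugacy class can be $\phi$-periodic (otherwise it would fail to be weakly attracted to $\mathcal{L}^\pm_\mathcal{K}(\phi)$). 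Once this algebraic fact is in hand, the geometric/dynamical consequences (2) and (3) follow from the flaring machinery already in place.
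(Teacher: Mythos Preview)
Your approach diverges from the paper's at the crucial step, and the divergence produces two genuine gaps.

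\textbf{The non-carrying of $[x_t]$.} You try to prove directly that $[x_t]$ is not carried by $\mathcal{K}$ and then invoke the flaring machinery. The paper runs the logic in the opposite direction: it introduces the auxiliary word $w=a_s x_t^{-1} a_t x_t$ (with $a_s\in K_s$, $a_t\in K_t$ nontrivial), observes that $[w]$ is not carried by $\mathcal{K}$, obtains exponential growth of $L_\mathcal{K}([\Phi^n_\#(w)])$ from standing assumption~(5), and then reads off the growth of $\Phi^n_\#(x_t)$ and of $y_n$ from the explicit form of $\Phi^n(w)$; the non-carrying of $[x_t]$ is deduced \emph{a posteriori}. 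Your route needs the assertion ``if $[x_t]$ is carried by $\mathcal{K}$ then some power $\Phi^k$ fixes $K_t$'', and this is where the sketch fails. Already when $x_t\in K_s$ no power of the \emph{same} lift $\Phi$ need fix $K_t$; what happens instead is that the \emph{different} lift $i_{x_t}\circ\Phi$ fixes both $K_s$ and $K_t$ (this is exactly how the paper handles item~(1)). More seriously, if $x_t$ lies in a conjugate of some $K_r$ with $r\neq s,t$, there is no visible mechanism producing any lift that simultaneously fixes $K_s$ and $K_t$: the iterates $\Phi^j(x_t)$ lie in varying conjugates of $K_r$, and their product $y_n$ has no reason to land in $K_t$. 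The auxiliary-word trick bypasses this case analysis entirely. (A smaller point: even once a lift fixes both $K_s$ and $K_t$, the contradiction is not that classes in $K_s*K_t$ are ``$\phi$-periodic'' but that $\Phi^n(a_sa_t)\in K_sK_t$ has bounded electrocuted length, which via Lemma~\ref{comparison} contradicts weak attraction to an element of $\mathcal{L}^+_\mathcal{K}(\phi)$.)

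\textbf{The argument for (3).} Proposition~\ref{strictflare} asserts $2|w|_{el}\le\max\{|\Phi^n_\#(w)|_{el},|\Phi^{-n}_\#(w)|_{el}\}$ for $n\ge N_\Phi$; it does not give $|\Phi(y_{n-1})|_{el}\ge 2|y_{n-1}|_{el}$. Both the exponent (you take $n=1$) and the direction (you need the forward iterate, not a maximum) are unjustified, so the induction does not close. The paper avoids any recursion: computing $\Phi^n(w)=\Phi^n(a_s)\cdot y_{n+1}^{-1} b_n\, y_{n+1}$ with $\Phi^n(a_s)\in K_s$ and $b_n\in K_t$, one sees that exponential growth of $\|[\Phi^n(w)]\|_{el}$ forces exponential growth of $|y_{n+1}|_{el}$ in one stroke.
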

\begin{proof}
First we prove (1). Observe that $x_t \notin K_t$. For otherwise $\Phi$ leaves both $K_s$ and $K_t$ invariant and this will violate item (5) of our list of conditions of being admissible  \ref{K} since $[K_s * K_t]$ will be left invariant by $\phi$. Similarly note that $x_t \notin K_s$, for otherwise there will be a lift of $\phi$ fixing both $K_s$ and $K_t$.  

To prove (2), let $a_s \in K_s, a_t\in K_t$ be nontrivial elements. Observe that the conjugacy class of  $w = a_s x_t^{-1} a_t x_j$ is not carried by $\mathcal{K}$ and so $[\Phi^n_\#(w)]$  weakly converges to some element of $\mathcal{L}^+_\mathcal{K}(\phi)$ as $n\to\infty$. Therefore  $L_\mathcal{K}([\Phi^n_\#(w)])$ grows exponentially  fast as $n\to \infty$. Since $K_s$ and $K_t$ are invariant under $\phi$, the only way this can happen is if $L_\mathcal{K}([\Phi^n_\#(x_t)])$  grows exponentially fast as $n$ increases. By using Lemma \ref{comparison}, we get the desired conclusions.

The third conclusion follows directly from the proof of the second conclusion by computing iterates of $w$ under $\Phi$.

\end{proof}

Now we will work on the cone locus of $\widehat\Gamma$; the induced tree of coned off spaces whose vertices and the edges here are the cone points in copies of
cosets of $\widehat \F$ (Section \ref{sec:relhyp}). For a lift $\Phi_i$ of $\phi$ that leaves $K_i$ invariant, the maximal cone subtree  correspond to cosets of  $\F \rtimes \langle \Phi \rangle$. 

\begin{remark}
Observe that  in the mapping torus $\F \rtimes \langle \Phi \rangle$, we have 
\[x_t \Phi(x_t) \Phi^2(x_t)\ldots \Phi^{n-1}(x_t) = \Phi_t^n \Phi^{-n}\]
where $\Phi = x_t^{-1} \Phi_t$ as a group element of $\F \rtimes \langle \Phi \rangle$. As elements of $\aut$  one may think of $\Phi_t$ as the composition of maps $\iota_{x_t^{-1}} \circ\Phi$. Here $\iota_{x_t^{-1}} $ is the inner automorphism $w \mapsto x_t w x_t^{-1}$.

\end{remark}

 \begin{proposition}[Cone-bounded hallways flaring] \label{cbhfc}
   Suppose $\phi\in \out$ be as in section \ref{sec:assump} and  $\mathcal{K} = \{ [K_1], [K_2], \ldots , [K_p]\}$  an admissible subgroup system for $\phi$. 
   Let $\Phi_s$ be a lift such that $\Phi_s(K_s)=K_s$ and $\Phi_s(K_t) = x_t^{-1} K_t x_t$ for some $x_t\in \F$ where $t\neq s$. Then there exists some $N_c > 0$ such that

    \[ 2{|x_t|}_{el} \leq \mathsf{max}\{{|{\Phi_s}^n_\#(x_t)|}_{el}, {|{\Phi_s}^{-n}_\#(x_t)|}_{el}\}   \]
   for every $n\geq N_c$ (independent of $s,t$) and for every such $t\neq s$.

\end{proposition}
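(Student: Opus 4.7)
The plan is to reduce Proposition~\ref{cbhfc} almost entirely to the forward exponential growth established in Lemma~\ref{conjugatorstretch}(2), together with a finiteness argument that produces the uniform constant $N_c$.

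First, I would observe that since $\mathcal{K}$ consists of only finitely many conjugacy classes of subgroups, there are only finitely many ordered pairs $(s,t)$ with $s \neq t$, and for each such pair the lift $\Phi_s$ is fixed and the twist element $x_t$ (which I would write as $x_{s,t}$ to record its dependence on $s$) is uniquely determined up to an element of $K_t$, so without loss of generality I fix one choice. Thus uniformity of the constant $N_c$ reduces to taking a maximum over these finitely many pairs, and it suffices to produce an integer $N_{s,t}$ for each pair individually.

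Fix such a pair $(s,t)$. Lemma~\ref{conjugatorstretch} already does the genuinely new work here: it confirms that $[x_{s,t}]$ is not carried by $\mathcal{K}$ and that ${|{\Phi_s}^n_\#(x_{s,t})|}_{el}$ grows exponentially fast as $n \to \infty$. Since ${|x_{s,t}|}_{el}$ is a fixed finite quantity, exponential growth immediately yields $N_{s,t}$ with ${|{\Phi_s}^n_\#(x_{s,t})|}_{el} \geq 2{|x_{s,t}|}_{el}$ for every $n \geq N_{s,t}$. In particular the maximum on the right-hand side of the claimed inequality is already bounded below by $2{|x_{s,t}|}_{el}$ for all such $n$, with no need for the ${\Phi_s}^{-n}_\#$ term. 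Setting $N_c$ to be the maximum of the $N_{s,t}$'s over the finitely many pairs produces the uniform constant asked for in the statement.

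The main obstacle is not located in this proposition itself but rather in Lemma~\ref{conjugatorstretch}(2), whose proof extracts the exponential growth from the algebraic identity $\Phi_s(K_t) = x_t^{-1} K_t x_t$ by feeding the non-carried conjugacy class $[a_s x_t^{-1} a_t x_t]$ into the generalized weak attraction machinery of Theorem~\ref{modwat}. Once that input is granted, Proposition~\ref{cbhfc} itself is a routine packaging step that enforces uniformity across all the finitely many nontrivial cone-to-cone twist elements that can appear in the induced tree of coned-off spaces.
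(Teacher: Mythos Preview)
Your argument is correct for the proposition as stated, and it is genuinely more direct than the route the paper takes. You bypass Proposition~\ref{strictflare} entirely: since Lemma~\ref{conjugatorstretch}(2) already gives exponential growth of ${|\Phi_s^n(x_t)|}_{el}$ in the forward direction, and since only finitely many pairs $(s,t)$ occur, a single maximum over these pairs yields $N_c$. Your remark that the backward iterate ${|\Phi_s^{-n}(x_t)|}_{el}$ is not even needed is also valid.

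The paper proceeds differently. It invokes Proposition~\ref{strictflare} to handle all conjugators with ${|x_t|}_{el} > L'$ by a \emph{uniform} exponent $N'$, and uses Lemma~\ref{conjugatorstretch} only as a bootstrap to push the finitely many short conjugators past the threshold $L'$. The paper also spends a paragraph analyzing what happens when $x_t$ is replaced by another conjugator $y_t$ in the same $K_t$--coset, showing via Lemma~\ref{conjugatorstretch}(3) that ${|\Phi_s^n(y_t)|}_{el}$ is controlled by the telescoping product ${|x_t\Phi_s(x_t)\cdots\Phi_s^{n-1}(x_t)|}_{el}$. Your ``without loss of generality fix one choice'' sidesteps this: for the proposition as written (where $x_t$ is part of the hypothesis data, one per pair) that is fine, but the paper's extra care is aimed at the downstream application to the full Mj--Reeves cone-bounded hallway condition, where the hallway may involve \emph{any} conjugator in the coset, not just a preferred one. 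Your shortcut proves the statement; the paper's longer argument is buying uniformity over all representatives of the conjugator, which is what the application ultimately consumes.
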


\begin{proof}
By (1) of Lemma \ref{conjugatorstretch} we have  $x_t \in \F \setminus \cup K_q$. We also note that $x_t$ has exponential growth under iterates of   $\Phi_s$ in the  electrified metric in $\widehat{\F}$ for each $s\neq t$. We apply  Proposition \ref{strictflare} to each $\Phi_s$.
Let $L' = \text{ max } \{L_s\}_{s=1}^p$ and $N' = \text{ max }\{N_s\}_{s=1}^p$, where $L_s$ (word-length threshold) and $N_s$ (exponent threshold).

If $y_t$ is a word in $\F$ such that
  $\Phi_s(K_t) = y_t^{-1} K_t y_t$ then we have $x_t^{-1}K_tx_t = y_t^{-1}K_t y_t$. Since $\mathcal{K}$ is malnormal, it follows that $y_t = x_t^{-1}a_t$ for some $a_t\in K_t$. This means that once $\Phi_s$ is chosen, the corresponding conjugators are uniquely determined up to choosing the tail $a_t$. Without loss of generality suppose that $x_t^{-1}$ has no tail in $K_t$. If $y_t = x_t^{-1}a_t$ for some $a_t\in K_t$, then $|y_t|_{el} = |x_t|_{el} +1$ and $\Phi^n_s(y_t) = \Phi^n_s(x_t^{-1}) \ldots \Phi_s(x_t^{-1}) x_t^{-1}  a_{n, t} x_t \ldots \Phi^{n-1}_s(x_t)$ for some $a_{n, t}\in K_t$. Since $\mathcal{K}$ is malnormal, for no $n > 0$ can we have $\Phi_s(a_{n, t})= x_t a_{n, t} x_t^{-1}$.
  Therefore we have shown that ${|\Phi^n_s(y_t)|}_{el}$ grows exponentially if and only if ${|x_t \Phi_s(x_t) \Phi^2_s(x_t)\ldots \Phi^{n-1}_s(x_t)|}_{el}$ grows exponentially. Lemma \ref{conjugatorstretch} now tells us that this is always true under our hypothesis.

If ${|x_t|}_{el} > L'$, then Proposition \ref{strictflare} does the job. If ${|x_t|}_{el} \leq L'$, then using Lemma \ref{conjugatorstretch}, we get constants $N'_{s, t}$ such that ${|\Phi_s^{N'_{s, t}}(x_t)}|_{el} > 2L' > {|x_t|}_{el}$ and ${|x_t \Phi_s(x_t) \Phi^2_s(x_t)\ldots \Phi^{N'_{s, t}}_s(x_t)|}_{el} > 2L' > {|x_t|}_{el}$. Now we can apply Proposition \ref{strictflare}  by replacing the $x_t$'s with an iterate if necessary. Let $N_s$ be the maximum of the integers $N'$ and $N'_{s, t}$ as $t$ varies. For the final step let $N_c$ be the max of $N_s$'s, as $s$ varies. This completes the proof.

\end{proof}

Finally, we are ready to prove the relative hyperbolicity of free-by-cyclic groups under the standing assumptions.

\begin{fbcrh}
Let $\phi\in\out$ and  $\mathcal{K} = \{ [K_1], [K_2], \ldots , [K_p]\}$   be an admissible subgroup system for $\phi$.

Then, the group $\F \rtimes \langle \phi \rangle$ is strongly hyperbolic relative to the collection of subgroups $\{K_s \rtimes \langle \Phi_s \rangle\}_{s=1}^p$, where $\Phi_s\in \aut$ is a lift of $\phi$ such that $\Phi_s (K_s) = K_s$.

\end{fbcrh}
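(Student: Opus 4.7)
The plan is to invoke the Mj--Reeves criterion in the form recorded as Theorem~\ref{mjr}, with $Q = \langle \phi \rangle$ infinite cyclic, and with the specified collection $\{K_s\}_{s=1}^p$ of peripheral subgroups. I need to verify the four hypotheses of Theorem~\ref{mjr}: (a) $\{K_s\}$ is mutually malnormal and quasiconvex in $\F$, (b) each conjugacy class $[K_s]$ is $\phi$-invariant, (c) the hallways flare condition, (d) the cone-bounded hallways strictly flare condition. Hypotheses (a) and (b) are built into the standing assumptions~\ref{SA}: malnormality of $\mathcal{K}$ is part of the definition of an admissible subgroup system, $\phi$-invariance of $[K_s]$ follows from assumption~(3), and quasiconvexity of finitely generated subgroups of a free group is automatic (via the Stallings core graph, which gives an immersion realizing a quasi-isometric embedding of each $K_s$ into $\F$). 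Hence only (c) and (d) need genuine work.

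Next, I would set up the geometric picture. The Cayley graph $\Gamma$ of $\F \rtimes \langle \phi \rangle$ is a tree of metric spaces over the Cayley tree of $\langle \phi \rangle \cong \mathbb{Z}$, with every vertex and edge space identified with a copy of $\F$, and with the identifications between adjacent vertex groups given by the chosen lift $\Phi_1$ (or any fixed lift) of $\phi$. Electrocuting each coset of each $K_s$ inside every copy of $\F$ produces the induced tree of coned-off spaces $\widehat\Gamma$; each vertex and edge space becomes a copy of the coned-off Cayley graph $(\widehat{\F}, |\cdot|_{el})$, which is $\delta$-hyperbolic since $\F$ is hyperbolic relative to the malnormal quasiconvex family $\{K_s\}$. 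The cone locus in $\widehat\Gamma$ consists of one maximal cone subtree $T_s$ for each $s$, where $T_s$ is (the coned version of) a copy of $K_s \rtimes \langle \Phi_s \rangle$; this identifies the peripheral family in the conclusion with the maximal parabolic subgroups in Theorem~\ref{mj-reeves}.

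To verify the hallways flare condition, I would take an essential hallway $f\colon [-m,m]\times I \to \widehat\Gamma$ and examine its horizontal trace in a fixed vertex space, which is an electrocuted geodesic word $w \in \widehat{\F}$. If $w$ does not lie in any $\bigcup K_i$ (i.e.\ the hallway is not cone-bounded at that level), then Proposition~\ref{strictflare}, applied to the lift $\Phi_1$, gives exactly the bi-Lipschitz flare
\[
2|w|_{el} \leq \max\bigl\{|\Phi_1^n(w)|_{el},\, |\Phi_1^{-n}(w)|_{el}\bigr\}
\]
for all $n \geq N_\Phi$ as long as $|w|_{el} \geq L_\Phi$. For conjugacy classes (closed hallways), Proposition~\ref{conjflare} supplies the analogous statement at the level of $\|\cdot\|_{el}$. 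These two propositions together cover every essential hallway whose central spine is not coned: one chooses $m$ equal to the uniform threshold $N_\Phi$ from Proposition~\ref{strictflare} and conjugacy flaring from Proposition~\ref{conjflare}, and the required flare follows.

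For the cone-bounded hallways strictly flare condition, I consider an essential hallway whose horizontal sides lie in the cone locus at every level; such a hallway traces a sequence of conjugators $x_t$ arising from the identity $\Phi_s(K_t) = x_t^{-1} K_t x_t$ for various $t \neq s$. Proposition~\ref{cbhfc} is tailor-made for this situation: it asserts that for each fixed $s$ and each $t \neq s$,
\[
2|x_t|_{el} \leq \max\bigl\{|\Phi_s^n(x_t)|_{el},\, |\Phi_s^{-n}(x_t)|_{el}\bigr\}
\]
for all $n \geq N_c$, with $N_c$ independent of the pair $(s,t)$. Combining this with the flaring of the ``tail'' product $x_t \Phi_s(x_t)\cdots \Phi_s^{n-1}(x_t)$ (Lemma~\ref{conjugatorstretch}(3)), we obtain the strict flare of the entire cone-bounded hallway.

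With all four hypotheses verified, Theorem~\ref{mjr} yields strong relative hyperbolicity of $\F \rtimes \langle \phi \rangle$ with respect to $\{K_s \rtimes \langle \Phi_s \rangle\}_{s=1}^p$. The main subtlety, and the only place where any care is really needed, is the identification of the peripheral subgroups with the cone locus components: one must check that the lift $\Phi_s$ used in the mapping torus of $K_s$ is the one fixing $K_s$ (rather than some other lift of $\phi$), and that with this choice the group generated by $K_s$ and the deck translation in the direction of $\Phi_s$ is exactly what one sees as the stabilizer of the maximal cone subtree $T_s$. Once this bookkeeping is in place, the three flaring propositions established above finish the proof.
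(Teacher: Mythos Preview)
Your proposal is correct and follows essentially the same approach as the paper: verify the hypotheses of Theorem~\ref{mjr} by citing Proposition~\ref{strictflare} for the hallways flare condition and Proposition~\ref{cbhfc} for the cone-bounded hallways strictly flare condition, then conclude. The paper's proof is just these three citations; your additional setup (verifying malnormality and quasiconvexity, describing the tree-of-spaces picture, invoking Proposition~\ref{conjflare} and Lemma~\ref{conjugatorstretch}(3)) is correct but not strictly needed, since those points are already absorbed into Theorem~\ref{mjr} and the two flaring propositions.
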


\begin{proof}

 Proposition \ref{strictflare} proves the hallway flaring condition and Proposition \ref{cbhfc} proves the cone-bounded hallway flaring condition required by the Mj-Reeves theorem. Now apply Lemma \ref{mjr} to complete the proof.
 \end{proof}

Observe that if $\Lambda^+_\phi$ is an attracting lamination of $\phi$, then the nonattracting subgroup system $\mathcal{A}_{na}(\Lambda^+_\phi)$ is an admissible subgroup system. We get the following corollary.

 \begin{corollary}\cite[Proposition 3.13]{Gh-23}
 \label{relhypext}
  Let $\phi\in\out$ be exponentially growing outer automorphism equipped with a dual lamination pair $\Lambda^\pm_\phi$, which
  are topmost. Let $\Phi\in\aut$ be a lift of $\phi$.
  Also let $\mathcal{A}_{na}(\Lambda^\pm_\phi) = \{[K_1], [K_2],\ldots, [K_p]\} $ denote the nonattracting subgroup system for $\Lambda^\pm_\phi$.
  If  $K_s$'s denote representatives of $[K_s]$ such that $\Phi_s(K_s)=K_s$ for some lift $\Phi_s$, then the extension group $\F \rtimes \langle \Phi \rangle$
  is strongly hyperbolic relative to the collection of subgroups $\{K_s \rtimes \langle \Phi_s \rangle\}_{s=1}^p$.
 \end{corollary}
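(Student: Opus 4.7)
The plan is to verify the hypotheses of the specialized Mj--Reeves criterion (Lemma \ref{mjr}) with $Q = \langle\phi\rangle$ and peripheral collection $\{K_s\}_{s=1}^p$, and then read off the conclusion. First I would check the algebraic setup: by definition $\mathcal{K}$ is malnormal (standing assumption (1)), each $K_s$ has finite rank hence is quasiconvex in $\F$, and the invariance $\phi(K_s) = K_s$ from standing assumption (3) says that the conjugacy class of $K_s$ is preserved by $\langle\phi\rangle$. So all of the algebraic hypotheses of Lemma \ref{mjr} are in place, and we are reduced to verifying the two geometric flaring conditions on the induced tree of coned-off spaces $\widehat\Gamma$.

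Next I would dispatch the hallways flare condition by direct appeal to Proposition \ref{strictflare}. An essential hallway in $\widehat\Gamma$ of length $2m$ is parametrized by the orbit $w, \Phi_\#(w), \ldots, \Phi^{2m}_\#(w)$ where $\Phi$ is the chosen lift of $\phi$, the lengths of the horizontal sides being measured in the electrocuted metric $|\cdot|_{el}$. The word $w$ labelling the middle fibre lies in $\F \setminus \bigcup_i K_i$ (otherwise the fibre would have electrocuted length $\leq 1$), and Proposition \ref{strictflare} yields constants $N_\Phi, L_\Phi$ so that once $|w|_{el} \geq L_\Phi$ and $n \geq N_\Phi$, one of the two outer fibres is at least twice the middle one. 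This is exactly the $\lambda$-hyperbolic hallway condition with $\lambda = 2$.

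For the cone-bounded hallways strictly flare condition, I would use Proposition \ref{cbhfc}. A cone-bounded hallway corresponds to a pair of cone-point paths sitting inside two parallel maximal cone subtrees $T_s$ and $T_t$ (i.e. cosets of $K_s \rtimes \langle\Phi_s\rangle$ and $K_t \rtimes \langle\Phi_t\rangle$). The horizontal translates across fibres are governed precisely by the conjugator elements $x_t$ from $\Phi_s(K_t) = x_t^{-1} K_t x_t$ for $t \neq s$, and Proposition \ref{cbhfc} provides the uniform constant $N_c$ so that the doubling inequality $2|x_t|_{el} \leq \max\{|\Phi_s^n(x_t)|_{el}, |\Phi_s^{-n}(x_t)|_{el}\}$ holds for all $n \geq N_c$. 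This is the strict flare for cone-bounded hallways. With both flaring conditions established, Lemma \ref{mjr} delivers strong relative hyperbolicity of $\F\rtimes\langle\phi\rangle$ relative to $\{K_s \rtimes \langle\Phi_s\rangle\}_{s=1}^p$.

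The main work has already been done before the statement, so the proof is genuinely a two-line citation; the only subtlety is making sure the labels match up between the abstract flaring conditions in Mj--Reeves and the concrete electrocuted-length estimates in Propositions \ref{strictflare} and \ref{cbhfc}. The harder of the two conditions to set up was the cone-bounded one, because it required Lemma \ref{conjugatorstretch} to guarantee that the conjugators $x_t$ themselves have exponential growth --- without that the hallway would degenerate and fail to strictly flare. But given all the preparation, the final assembly is routine.
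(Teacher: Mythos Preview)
Your approach is correct and is essentially the paper's own: the paper deduces the corollary by observing that $\mathcal{A}_{na}(\Lambda^+_\phi)$ is an admissible subgroup system and then invoking Theorem~\ref{fbcrh}, whose proof is precisely the two-line citation of Propositions~\ref{strictflare} and~\ref{cbhfc} followed by Lemma~\ref{mjr} that you wrote out. The one thing you leave implicit is the key observation itself: you reference ``standing assumption (1)'' and ``standing assumption (3)'' as if they were given, but the corollary does not hypothesize an admissible system --- it hands you the specific system $\mathcal{A}_{na}(\Lambda^\pm_\phi)$, and Propositions~\ref{strictflare} and~\ref{cbhfc} require the \emph{full} list~\ref{SA}(1)--(5) as hypotheses. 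So you should explicitly note, using Lemma~\ref{NAS}, that $\mathcal{A}_{na}(\Lambda^\pm_\phi)$ is admissible (malnormality and $\phi$-invariance are Lemma~\ref{NAS}(1),(3); condition~(2) holds since $\Lambda^+_\phi$ is not carried by its own nonattracting system; condition~(5) is Lemma~\ref{NAS}(2); condition~(4) follows as in Section~\ref{sec:5}). Once that sentence is added your argument and the paper's coincide.
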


 Applying an induction argument to the restriction of $\phi$ on the subgroups $K_s$, one obtains the following result.

 \begin{corollary}\cite[Corollary 3.16]{Gh-23} \label{GL}
  The mapping torus of every  exponentially growing  $\phi\in\out$ is (strongly) hyperbolic relative to a finite collection of peripheral
  subgroups of the form $K_s\rtimes_{\Phi_s} \mathbb{Z}$, where $\Phi_s$ is a lift of $\phi$ that preserves $K_s$
  and the outer automorphism class of $\Phi_s$ restricted to $K_s$ is polynomially growing.
  \end{corollary}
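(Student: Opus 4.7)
The plan is an induction on the rank of $\F$, using Corollary \ref{relhypext} as the inductive step and the transitivity of relative hyperbolicity to assemble the final peripheral structure. The base case is when either $\rk(\F) = 2$, in which every exponentially growing outer automorphism is fully irreducible, or when $\phi$ is polynomially growing (in which case the statement is vacuous, with $\F \rtimes \langle \phi\rangle$ itself the only peripheral). In the fully irreducible case, Corollary \ref{relhypext} applies with $\mathcal{A}_{na}(\Lambda^\pm_\phi) = \emptyset$ (or trivial), and the extension is already hyperbolic, giving the desired conclusion trivially.

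For the inductive step, assume the result holds for all proper free factors of $\F$ under rotationless, exponentially growing outer automorphisms that preserve them. Given $\phi \in \out$ rotationless and exponentially growing, choose a topmost dual lamination pair $\Lambda^\pm_\phi$ (these always exist for exponentially growing $\phi$) and apply Corollary \ref{relhypext}. This yields a lift $\Phi$ and peripheral subgroups $\{K_s \rtimes \langle \Phi_s\rangle\}_{s=1}^p$, where $[K_s] \in \mathcal{A}_{na}(\Lambda^\pm_\phi)$ and $\Phi_s(K_s) = K_s$. Crucially, because $\Lambda^+_\phi$ is not carried by $\mathcal{A}_{na}(\Lambda^\pm_\phi)$ and the generic leaf of $\Lambda^+_\phi$ has filling free factor support strictly containing each $[K_s]$, the rank of each $K_s$ is strictly less than $\rk(\F)$.

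Now consider the restriction $\phi_s := [\Phi_s|_{K_s}] \in \mathrm{Out}(K_s)$. Rotationlessness of $\phi$ implies rotationlessness of $\phi_s$ (periodic structure on $K_s$ is inherited from the ambient CT), so the inductive hypothesis applies. If $\phi_s$ is polynomially growing, we keep $K_s \rtimes \langle \Phi_s \rangle$ as a peripheral. If $\phi_s$ is exponentially growing, induction produces a finite collection $\{K_{s,j} \rtimes \langle \Phi_{s,j}\rangle\}_j$ of peripherals for $K_s \rtimes \langle \Phi_s\rangle$ such that each $\Phi_{s,j}|_{K_{s,j}}$ is polynomially growing. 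By transitivity of (strong) relative hyperbolicity (a relatively hyperbolic group whose peripheral subgroups are themselves relatively hyperbolic, with the BCP property respected, is relatively hyperbolic relative to the collection of the peripherals' peripherals), we conclude that $\F \rtimes \langle \Phi \rangle$ is strongly hyperbolic relative to the union over $s$ of either $\{K_s \rtimes \langle \Phi_s \rangle\}$ (when $\phi_s$ is polynomial) or $\{K_{s,j} \rtimes \langle \Phi_{s,j}\rangle\}_j$ (when $\phi_s$ is exponential).

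The main obstacle is the transitivity step: one must verify that passing from a relatively hyperbolic pair $(\calG, \calP)$ to the system obtained by replacing each $P \in \calP$ with its own peripherals indeed preserves strong relative hyperbolicity, including the bounded coset penetration property. This is a known property of strongly relatively hyperbolic groups (going back to work of Drutu--Sapir and Osin), but one should explicitly check that the induced peripheral subgroups $K_{s,j} \rtimes \langle \Phi_{s,j}\rangle$ are malnormal and quasiconvex in $\F \rtimes \langle \Phi \rangle$; malnormality follows from the malnormality of $\mathcal{A}_{na}$ at each stage, and quasiconvexity follows from the fact that each $K_{s,j}$ is a finite-rank subgroup lying inside a finite-rank $K_s$ with compatible invariant structure. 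A secondary check is that induction terminates: the rank strictly decreases at each step (because $K_s$ is a proper subsystem), so after finitely many iterations every remaining restriction is polynomially growing, completing the argument.
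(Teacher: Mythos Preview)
Your proposal is correct and follows essentially the same approach as the paper, which simply states ``Applying induction argument to the restriction of $\phi$ on the subgroups $K_s$, one obtains the following result'' and defers to \cite{Gh-23}; you have merely spelled out the details of that induction together with the transitivity-of-relative-hyperbolicity step needed to assemble the peripherals. One small wording fix: your inductive hypothesis should be phrased for free groups of strictly smaller rank rather than for ``proper free factors of $\F$'', since in the geometric case a component of $\mathcal{A}_{na}(\Lambda^\pm_\phi)$ need not be a free factor (the cyclic $[\langle\rho\rangle]$ piece), though it still has smaller rank.
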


\section {Relatively hyperbolic free-by-free groups from admissible subgroup systems }\label{MtoRH}

In this section we will give sufficient conditions to prove Theorem \ref{main1}. We first modify some definitions from \cite{Gh-23} and \cite{GMj-22} to adapt to our general situation here since we do not use the topmost lamination but \emph{any} lamination.

\begin{definition}\label{relind}
	Let $\phi_1,\cdots   \phi_k $ be a collection of exponentially growing  outer automorphisms of $\F$. Let $\mathcal{K} = \{[K_1], \ldots , [K_p] \} $ be a malnormal subgroup system such that each $[K_s]$ is invariant under the collection $\phi_1,\cdots   \phi_k \in \out$. Let $\mathcal{L}^+_{\mathcal{K}}(\phi_i)$ (resp. $\mathcal{L}^-_{\mathcal{K}}(\phi_i)$) denote the set of attracting laminations of $\phi_i$ (resp. $\phi^{-1}_i$) which are not carried by $\mathcal{K}$ and let  $\mathcal{L}^\pm_\mathcal{K}(\phi_i)$ denote the set $\mathcal{L}^+_\mathcal{K}(\phi_i) \cup \mathcal{L}^-_\mathcal{K}(\phi_i)$.
	
	We say that $\phi_i, \phi_j$, $i\neq j$ are \emph{independent relative to $\mathcal{K}$ } if generic leaves of  elements of $\mathcal{L}^\pm_\mathcal{K}(\phi_i)$ and  $\mathcal{L}^\pm_\mathcal{K}(\phi_j)$ are not asymptotic to each other.
 We will say that a collection $\phi_1,\cdots, \phi_k \in \out$ is independent relative to  $\mathcal K$ if every pair $\phi_i, \phi_j$, $i\neq j$ in the collection is independent relative to $\mathcal K$. 

\end{definition}

 \begin{lemma}[Disjoint neighborhoods exist]\label{disjointnbds}
Let $\phi_1,\cdots , \phi_k$  be a collection that is independent relative to an admissible subgroup system $\mathcal{K}$. Then,
\begin{enumerate}
	\item For  all $i\in \{1,\cdots k\}$ there exists neighborhoods $V^+_{\phi_i}$  and $V^-_{\phi_i}$, of generic leaves of every element of $\mathcal{L}^+_\mathcal{K}(\phi_i)$
 and $\mathcal{L}^-_\mathcal{K}(\phi_i)$ respectively such that $V^+_{\phi_i} \cap V^-_{\phi_i} = \emptyset$. 
	\item $V^+_{\phi_i}, V^-_{\phi_i}, V^+_{\phi_j}, V^-_{\phi_j}$, $i\neq j$ can be chosen so that they are pairwise disjoint.
	\item $\mathcal{L}^\pm_\mathcal{K}(\phi_i) \cap \mathcal{L}^\pm_\mathcal{K}(\phi_j) = \emptyset$, for each $i\neq j$.
\end{enumerate}
 Moreover, we have $(3) \implies (2) \implies \,\, \phi_1, \ldots, \phi_k$ are independent relative to $\mathcal{K}$. 

\end{lemma}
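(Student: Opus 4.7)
The plan is to dispatch (1) immediately, then prove (3) by a short contradiction, and handle (2) together with the ``moreover'' chain via a single bi-recurrence observation. Claim (1) requires essentially no work: it is admissibility condition (4) in Standing Assumptions \ref{SA} applied to each $\phi_i$ individually. After enlarging the bounded-cancellation constant $C$ once (to dominate the relevant constants across all $\phi_i$ simultaneously), the unions $V^+_{\phi_i}$ and $V^-_{\phi_i}$ of attracting neighborhoods may be taken disjoint.

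For (3), I would argue by contradiction: if $\Lambda \in \mathcal{L}^\pm_\mathcal{K}(\phi_i) \cap \mathcal{L}^\pm_\mathcal{K}(\phi_j)$ for some $i \neq j$, any generic leaf $\ell$ of $\Lambda$ simultaneously serves as a generic leaf for an element of both collections, and being trivially asymptotic to itself, violates independence. The engine for the rest of the proof is a bi-recurrence observation I would establish first: if two generic leaves $\ell_1, \ell_2$ of attracting laminations are asymptotic and share a common ray $R$, then bi-recurrence of $\ell_1$ forces every finite subpath of $\ell_1$ to appear in $R \subset \ell_2$ (and symmetrically), so that $\ell_2$ lies in every weak neighborhood of $\ell_1$ in $\mathcal{B}$.

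For (2) under the independence hypothesis: non-asymptotic generic leaves $\ell_1, \ell_2$ can be separated by a compactness argument carried out in $\widetilde{\mathcal{B}}$ (using the Hausdorff Cantor topology on $\partial \F$ and then descending $\F$-equivariantly to $\mathcal{B}$), producing defining subpaths $\sigma_1 \subset \ell_1$, $\sigma_2 \subset \ell_2$ long enough that no single line can contain both --- for otherwise a weak-subsequential limit of hypothetical offending lines would force asymptoticity of a pair of generic leaves, contrary to independence. Intersecting these constraints over the finite set of generic-leaf pairs yields the pairwise disjoint $V^\pm_{\phi_i}, V^\pm_{\phi_j}$. Both legs of the ``moreover'' chain now reduce to the bi-recurrence observation. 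For $(2) \Rightarrow$ independence, I would argue contrapositively: if generic leaves $\ell_i \in \mathcal{L}^?_\mathcal{K}(\phi_i)$ and $\ell_j \in \mathcal{L}^?_\mathcal{K}(\phi_j)$ were asymptotic, the observation places $\ell_j$ into every weak neighborhood of $\ell_i$ and hence into $V^?_{\phi_i} \cap V^?_{\phi_j}$, contradicting (2). For $(3) \Rightarrow (2)$, the bi-recurrence observation upgrades (3) to independence (asymptotic generic leaves from distinct laminations would force mutual membership in each other's weak closure, forcing the laminations to coincide, contradicting (3)), after which (2) follows from the main direction already established. The main obstacle I anticipate is the non-Hausdorff nature of the weak topology on $\mathcal{B}$ in the compactness/separation step; lifting to $\widetilde{\mathcal{B}}$ and transferring back $\F$-equivariantly should handle this cleanly.
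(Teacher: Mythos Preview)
Your approach is essentially the same as the paper's: both dispatch (1) via the admissibility axioms, both argue (2) by contraposition (failure to separate the neighborhoods forces asymptotic generic leaves), and both treat (3) and the ``moreover'' chain as consequences of the same mechanism. The paper is terser---it defers the separation step to \cite[Lemma~3.9]{GMj-22} with only ``for otherwise we would have asymptotic generic leaves,'' and says the moreover part ``follows directly from proof of item~(2)''---whereas you make the underlying bi-recurrence observation explicit and propose a compactness argument in $\widetilde{\mathcal{B}}$ to justify the separation. Your bi-recurrence observation (a common ray plus bi-recurrence forces mutual membership in each other's weak closure, hence equality of the laminations) is exactly the right tool for the $(3)\Rightarrow(2)$ and $(2)\Rightarrow$independence implications, and is a genuine clarification over the paper's one-line remark. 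The one place to be careful is your weak-limit step for (2): a subsequential limit of offending lines centered on the $\sigma_1$-segments recovers $\ell_1$, but the $\sigma_2$-segments can drift to infinity along those lines and escape the limit, so the passage to ``asymptoticity of a pair of generic leaves'' needs the bi-recurrence observation (or the tile structure of generic leaves) rather than compactness alone---this is presumably what the paper's reference supplies.
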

%containing $\mathcal{L}^+_\mathcal{K}(\phi)$ (which is the union of attracting neighborhoods for elements of $\mathcal{L}^-_\mathcal{K}(\phi)$) containing $\mathcal{L}^-_\mathcal{K}(\phi)$ 
 %, (which is the union of attracting neighborhoods for elements of $\mathcal{L}^+_\mathcal{K}(\phi)$) 

\begin{proof}

To prove (1) we first choose a sufficiently long generic leaf segment for each  attracting (resp. repelling) lamination in $\mathcal{L}_\mathcal{K}(\phi)$ (resp. in $\mathcal{L}_\mathcal{K}(\phi^{-1})$). Then we choose an attracting (resp. repelling) neighborhood for that generic leaf and take union over all such neighborhoods.

%(1) is easily proven by choosing attracting (repelling) neighborhoods defined by choosing sufficiently long generic leaf segment of the corresponding attracting (repelling) laminations in the set $\mathcal{L}_\mathcal{K}(\phi)$ ($\mathcal{L}_\mathcal{K}(\phi^{-1})$) and then taking the union to get $V^+_\phi$ ($V^-_\phi$).

Proof of item (2) is similar to the proof of \cite[Lemma 3.9]{GMj-22}. Fix $i,j \in \{1, \cdots, k\}$. By choosing sufficiently long generic leaf segments of laminations $\Lambda_i, \Lambda_j$, we can find attracting neighborhoods $V^+_i, V^+_j$ of $\Lambda_i, \Lambda_j$ respectively, so that  $V^+_i\cap V^+_j=\emptyset$ for otherwise we would have asymptotic generic leaves, which would  violate the independence hypothesis. Proceeding similarly with other elements of $\mathcal{L}^+_\mathcal{K}(\phi_i)$  we can find disjoint attracting neighborhoods for each pair of attracting laminations from sets $\mathcal{L}^+_\mathcal{K}(\phi_i)$ and  $\mathcal{L}^+_\mathcal{K}(\phi_j)$. Taking  unions of the corresponding attracting neighborhoods gives us $V^+_{\phi_i}, V^+_{\phi_j}$ such that $ V^+_{\phi_i}\cap V^+_{\phi_j} = \emptyset$. Now we repeat the argument for all $i, j \in \{1,\cdots, k\}$ to complete the proof of (2). \\
(3) follows directly from (2). The proof of ``moreover" part follows directly from proof of item (2). 

\end{proof}

\subsection{3--outof--4 stretch}
 For the next proposition we perform a partial electrocution by coning-off $\Gamma$ in the
 short exact sequence $1\to\F\to\Gamma\to Q\to 1$  with respect to the
 collection of subgroups $\{K_s\}$ and an electrocution on $\F$ by coning-off the same collection of subgroups.
 The resulting electric metric ${|\cdot|}_{el}$ on $\widehat{\F}$ is the one used in the statements below.

The following theorem originates from Lee Mosher's work on mapping class groups \cite{Mos-97}.
 For the free groups case it was first shown in \cite{BFH-97} for fully irreducible nongeometric elements.

  \begin{proposition}[3-outof-4 stretch]
 \label{34}
  Let $\phi,\psi \in \out$ be exponentially growing and assume that they are  independent relative to an admissible subgroup system $\mathcal{K}$. We have, 
  \begin{enumerate}
   \item There exists some $M > 0$ such that
   for any conjugacy class $\alpha$ not carried by $\mathcal{K}$,  at least three of the four numbers
   $$ {||\phi^{n}_\#(\alpha)||}_{el}, {||\phi^{-n}_\#(\alpha)||}_{el},
  {||\psi^{n}_\#(\alpha)||}_{el}, {||\psi^{-n}_\#(\alpha)||}_{el}$$
  are greater than or equal to $3{||\alpha||}_{el}$, for all $n \geq M$.

  \item Given lifts $\Phi, \Psi$, there exist some $L_1 > 0, N > 0$ such that for any word $w\in\F\setminus\bigcup K_i$ with ${|w|}_{el} \geq L_1$, at least three of the four numbers

   $$ {|\Phi^{n}_\#(w)|}_{el}, {|\Phi^{-n}_\#(w)|}_{el},
  {|\Psi^{n}_\#(w)|}_{el}, {|\Psi^{-n}_\#(w)|}_{el}$$
  are greater than $2{|w|}_{el}$, for all $n \geq N$.

  \end{enumerate}

 \end{proposition}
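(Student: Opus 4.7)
The plan is to combine pairwise disjointness of the attracting and repelling neighborhoods of $\phi$ and $\psi$ with the generalized weak attraction theorem and the flaring propositions already established. By the hypothesis that $\phi$ and $\psi$ are independent relative to $\mathcal{K}$, Lemma \ref{disjointnbds} supplies pairwise disjoint neighborhoods $V^{+}_{\phi}, V^{-}_{\phi}, V^{+}_{\psi}, V^{-}_{\psi}$ of the generic leaves of the elements of $\mathcal{L}^{+}_{\mathcal{K}}(\phi), \mathcal{L}^{-}_{\mathcal{K}}(\phi), \mathcal{L}^{+}_{\mathcal{K}}(\psi), \mathcal{L}^{-}_{\mathcal{K}}(\psi)$, respectively. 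Enlarging the defining generic leaf segments if necessary, we arrange that each of $\phi$ and $\psi$ satisfies the standing assumptions of Section \ref{sec:assump} with respect to its own pair and the admissible subgroup system $\mathcal{K}$.

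For part (1), fix a conjugacy class $\alpha$ not carried by $\mathcal{K}$; since $\mathcal{K}$ is invariant under both $\phi$ and $\psi$, every iterate of $\alpha$ under $\phi^{\pm 1}$ and $\psi^{\pm 1}$ also fails to be carried by $\mathcal{K}$. Apply Theorem \ref{modwat} separately to $\phi, \phi^{-1}, \psi, \psi^{-1}$ to obtain, for all $m$ larger than some uniform threshold $M_{0}$, the four implications
\[
\alpha \notin V^{-}_{\phi} \;\Longrightarrow\; \phi^{m}_{\#}(\alpha)\in V^{+}_{\phi},
\qquad
\alpha \notin V^{+}_{\phi} \;\Longrightarrow\; \phi^{-m}_{\#}(\alpha)\in V^{-}_{\phi},
\]
together with their $\psi$ analogues. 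Pairwise disjointness forces $\alpha$ into at most one of the four neighborhoods, so at least three of the four hypotheses are satisfied. For each such case, the chain of arguments in the proof of Proposition \ref{conjflare}, namely Lemma \ref{legality} followed by Proposition \ref{flare} and the length comparison Lemma \ref{comparison}, upgrades the weak attraction conclusion to the quantitative stretching ${||\phi^{\pm n}_{\#}(\alpha)||}_{el} \geq 3{||\alpha||}_{el}$ (or its $\psi$ analogue) for all $n$ beyond a uniform threshold $M \geq M_{0}$. Taking this $M$ yields the required 3-out-of-4 estimate.

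Part (2) runs the same dichotomy for the line determined by the cyclically reduced form of the word $w \in \F \setminus \bigcup K_{i}$. When $w$ is already cyclically reduced, the \emph{moreover} clause of Proposition \ref{strictflare} furnishes the quantitative estimate directly in each of the three good directions. Otherwise, following the prepending trick used in the proof of Proposition \ref{strictflare}, we left-multiply $w$ by a fixed basis element $b$ of a free factor of $\F$ that is simultaneously invariant under the chosen lifts $\Phi, \Psi$ and whose conjugacy class lies outside $\mathcal{K}$, and apply the argument to $bw$. The factor $2$ (rather than $3$) in the conclusion accounts for the additive bounded-cancellation correction absorbed by this trick. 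The thresholds $L_{1}$ and $N$ are the maxima, over the four directions, of the thresholds supplied by Proposition \ref{strictflare}.

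The main obstacle is uniform bookkeeping: producing a single pair $(L_{1}, N)$ that works across all four iterates, and simultaneously arranging that the prepending of $b$ does not push $bw$ into the single neighborhood we wish to avoid. Fixing $b$ once and for all so that ${|b|}_{el}$ is a constant depending only on $\phi, \psi, \mathcal{K}$, absorbing it into $L_{1}$, and invoking pairwise disjointness for large ${|w|}_{el}$, guarantees that the case analysis from part (1) transfers intact.
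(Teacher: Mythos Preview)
Your proposal is correct and uses the same circle of ideas as the paper. For part~(1) you argue directly: since the four neighborhoods $V^{\pm}_{\phi}, V^{\pm}_{\psi}$ are pairwise disjoint (Lemma~\ref{disjointnbds}), any $\alpha$ not carried by $\mathcal{K}$ misses at least three of them, and for each missed neighborhood the machinery behind Proposition~\ref{conjflare} (Theorem~\ref{modwat} $\to$ Lemma~\ref{legality} $\to$ Proposition~\ref{flare} $\to$ Lemma~\ref{comparison}) yields the desired uniform stretch in that direction. The paper instead runs this by contradiction: assuming two of the four numbers fail, it invokes Proposition~\ref{conjflare} to pin down exactly which two, passes to a subsequence lying outside (say) $V^{-}_{\psi}$, and then derives the stretching in the $\psi$ direction via the same chain, contradicting the assumed failure. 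The two arguments are logically equivalent and rely on identical lemmas; your direct version is arguably more transparent about where uniformity comes from.

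For part~(2) both you and the paper reduce to part~(1) by passing to a cyclically reduced word and invoking the prepending trick from Proposition~\ref{strictflare}. One small overreach: you ask for a basis element $b$ lying in a free factor \emph{simultaneously} invariant under both chosen lifts $\Phi$ and $\Psi$. Such a common invariant free factor need not exist, and the paper does not require it; all that is needed is some $b$ with $[b]$ not carried by $\mathcal{K}$ such that $bw$ is cyclically reduced and lies in $\F\setminus\bigcup K_t$, after which part~(1) applies to $[bw]$ and the bounded discrepancy between $|w|_{el}$ and $|bw|_{el}$ is absorbed into $L_1$. Dropping the simultaneous-invariance requirement removes the obstacle you flag in your final paragraph.
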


 \begin{proof}
 To prove (1),  %First suppose that both $\phi$ and $\psi$ are rotationless.
   suppose there does not exist any such $M$. Then,  there is a  sequence of conjugacy classes
  $\alpha_i$ which is not carried by $\mathcal K$, and $n_i>i$ such that at least two of the four numbers
  ${||\phi^{n_i}_\#(\alpha_i)||}_{el}, {||\phi^{-n_i}_\#(\alpha_i)||}_{el}$,
  ${||\psi^{n_i}_\#(\alpha_i)||}_{el}, {||\psi^{-n_i}_\#(\alpha_i)||}_{el}$ are less than
  $3{||\alpha_i||}_{el}$.  By Proposition \ref{conjflare} (conjugacy flaring) at least one of
  $\{{||\phi^{n_i}_\#(\alpha_i)||}_{el}, {||\phi^{-n_i}_\#(\alpha_i)||}_{el}\}$ is $\geq 3{||\alpha_i||}_{el}$ and at least one of  $\{{||\psi^{n_i}_\#(\alpha_i)||}_{el}$, $ {||\psi^{-n_i}_\#(\alpha_i)||}_{el}\}$ is $\geq 3{||\alpha_i||}_{el}$  for all sufficiently large $i$.  Suppose that
  \begin{equation*}
      {||\phi^{n_i}_\#(\alpha_i)||}_{el} \leq 3{||\alpha_i||}_{el} \,\,\,\text{and}\,\,\,
  {||\psi^{n_i}_\#(\alpha_i)||}_{el} \leq 3{||\alpha_i||}_{el}  
  \end{equation*} for all $n_i$.\hfill 

  By Lemma \ref{disjointnbds} we can choose pairwise disjoint neighborhoods  $V^+_\phi, V^-_\phi, V^+_\psi, V^-_\psi $. Since  $V^-_\phi \cap V^-_\psi = \emptyset$, elements of $\{\alpha_i\}$ cannot be in both $V^-_\phi$ and $V^-_\psi$. After passing to a subsequence if necessary suppose that $\{\alpha_i\} \notin V^-_\psi$. Since each $\alpha_i$ is weakly attracted to some element of $\mathcal{L}^+_\mathcal{K}(\psi)$, we may pass to a further subsequence and assume that all elements of the sequence $\{\alpha_i\}$ are weakly attracted to some $\Lambda^+_\psi \in \mathcal{L}^+_\mathcal{K}(\psi)$.

   Hence by using the uniformity part of the weak attraction theorem, there exists some $M_0$
  such that $\psi^m_\#(\alpha_i)\in V^+_\psi$ for all $m\geq M_0$. Choosing $i$ to be sufficiently large
  we may assume that $n_i \geq M_0$ and so by using Lemma \ref{legality} we have ${LEG}_\mathcal{K}(\psi^{n_i}_\#(\alpha_i))\geq \epsilon$ for some $\epsilon>0$.
  By using Lemma \ref{flare} we obtain that for any $A>0$ there exists some $M_1$ such that
  $$L_\mathcal{K}(\psi^s_\#(\alpha_i)) \geq A. L_\mathcal{K}(\alpha_i) $$ for every $s>M_1$. Therefore,
 $L_\mathcal{K}(\psi^{n_i}_\#(\alpha_i)) \geq A. L_\mathcal{K}(\alpha_i)$  for all sufficiently large $i$. Choosing a
  sequence $A_i\to \infty$ and after passing to a subsequence of $\{n_i\}$ we may assume that
  $L_\mathcal{K}(\psi^{n_i}_\#(\alpha_i)) \geq A_i. L_\mathcal{K}(\alpha_i)$.  But by Lemma \ref{comparison}, there is a $J>0$ such that we get
  \[ A_i \leq L_\mathcal{K}(\psi^{n_i}_\#(\alpha_i)) / L_\mathcal{K}(\alpha_i) \leq J^2 {||\psi^{n_i}_\#(\alpha_i)||}_{el} /   {||\alpha_i||}_{el} \leq 3 J^2.\]
 This is a contradiction since $A_i \to \infty $.

%  Proof of (2) is similar to proof of Proposition \ref{strictflare}.
 By the part (1), there exists some $M > 0$ such that for any conjugacy class $\alpha$ not carried by $\mathcal{K}$, at least three of the four numbers ${||\phi^{n}_\#(\alpha_i)||}_{el}, {||\phi^{-n}_\#(\alpha_i)||}_{el}, {||\psi^{n}_\#(\alpha_i)||}_{el}$, and ${||\psi^{-n}_\#(\alpha_i)||}_{el}$  grows exponentially as $i\to \infty$.

If $w$ is a cyclically reduced representative of some conjugacy class $\alpha$ not carried by $\mathcal{K}$, then $w$ is the shortest representative of its conjugacy class. Hence ${|w|}_{el} = {|| [w]||}_{el}$ and  ${||\phi^{n}_\#([w]])||}_{el} = {|\phi^{n}_\#(w)|}_{el}$. Hence, by using part (1), three of the four lengths ${|\phi^{n}_\#(w)|}_{el}$, ${|\phi^{-n}_\#(w)|}_{el}$, ${|\psi^{n}_\#(w)|}_{el}$, ${|\psi^{-n}_\#(w)|}_{el}$ grows exponentially as $i\to \infty$ and we are done. 

For the next part, assume that $w$ is not cyclically reduced. Proceed as in the proof of Proposition \ref{strictflare} to choose some word $b$, so that $bw\in \F\setminus\cup K_t$ is cyclically reduced and apply the previous argument.
\end{proof}

\begin{corollary}[(2k-1)-outof-2k stretch]\label{2k-1st}
    Let $\phi_1, \ldots ,\phi_k $ be a collection of exponentially growing outer automorphisms that is independent relative to an admissible subgroup system $\mathcal{K}$. We have, 
  \begin{enumerate}
   \item There exists some $M > 0$ such that
   for any conjugacy class $\alpha$ not carried by $\mathcal{K}$, at least $2k-1$ of the $2k$  numbers $\{{||\phi^n_{i_\#}(\alpha)||}_{el}, {||\phi^{-n}_{i_\#}(\alpha)||}_{el}\}_{i=1}^k$ are greater than or equal to $3{||\alpha||}_{el}$, for all $n \geq M$.

  \item Given lifts $\Phi_i$ of $\phi_i$, there exist some $L_1 > 0, N > 0$ such that for any word $w\in\F\setminus\bigcup K_i$ with ${|w|}_{el} \geq L_1$, at least $2k-1$ of the  $2k$ numbers $\{{|\Phi^{n}_{i\#}(w)|}_{el}, {|\Phi^{-n}_\#(w)|}_{el}\}_{i=1}^k$ are greater than $2{|w|}_{el}$, for all $n \geq N$.
  \end{enumerate}
\end{corollary}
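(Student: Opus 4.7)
My plan is to reduce the $k$-tuple statement to the two-automorphism statement of Proposition \ref{34} by a pairwise case analysis. The key observation is that, since the collection $\phi_1,\ldots,\phi_k$ is independent relative to $\mathcal{K}$, every pair $(\phi_i,\phi_j)$ with $i\neq j$ satisfies the hypotheses of Proposition \ref{34}. So the inputs we will need are Proposition \ref{34} for each pair and the single-automorphism conjugacy flaring statement Proposition \ref{conjflare} (resp.\ Proposition \ref{strictflare} for part (2)).

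To prove part (1), for each ordered pair $(i,j)$ with $i\neq j$ let $M_{ij}$ be the exponent produced by Proposition \ref{34}(1) applied to the independent pair $(\phi_i,\phi_j)$, and let $M^{(i)}$ be the exponent produced by Proposition \ref{conjflare} applied to $\phi_i$ alone. Set $M=\max\{M_{ij},M^{(i)}\}$, which is finite since there are only finitely many automorphisms in the collection. Now suppose, toward a contradiction, that for some $n\geq M$ and some conjugacy class $\alpha$ not carried by $\mathcal{K}$, at least two of the $2k$ numbers $\{\,||\phi_i^n{}_\#(\alpha)||_{el},\ ||\phi_i^{-n}{}_\#(\alpha)||_{el}\,\}_{i=1}^{k}$ are strictly less than $3||\alpha||_{el}$. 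There are two cases. If both failures correspond to the same index $i$, i.e.\ both $\phi_i^{n}$ and $\phi_i^{-n}$ fail to stretch $\alpha$ by a factor of $3$, then this directly contradicts Proposition \ref{conjflare} applied to $\phi_i$. If the two failures correspond to two distinct indices $i\neq j$, then among the four numbers associated with the pair $(\phi_i,\phi_j)$ at least two fail to stretch by $3$, which contradicts Proposition \ref{34}(1) applied to this independent pair. Hence at most one of the $2k$ numbers can fail, proving part (1).

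Part (2) follows by the same case analysis, with the constants and statements upgraded to the word-length versions. For each pair $(i,j)$ let $N_{ij}$ and $L_{ij}$ be the constants from Proposition \ref{34}(2) applied to the lifts $(\Phi_i,\Phi_j)$, and for each $i$ let $N^{(i)}$ and $L^{(i)}$ come from Proposition \ref{strictflare} applied to $\Phi_i$. Set $N=\max\{N_{ij},N^{(i)}\}$ and $L_1=\max\{L_{ij},L^{(i)}\}$. Assume some $w\in\F\setminus\bigcup K_s$ with $|w|_{el}\geq L_1$ admits two failures among $\{\,|\Phi_i^{n}{}_\#(w)|_{el},\ |\Phi_i^{-n}{}_\#(w)|_{el}\,\}_{i=1}^{k}$ for some $n\geq N$; either both failures share an index $i$ (contradicting Proposition \ref{strictflare}) or they are split across two distinct indices $i\neq j$ (contradicting Proposition \ref{34}(2) for the pair). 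As in the proof of Proposition \ref{strictflare}, the case where $w$ is not cyclically reduced is handled by precomposing with a short element $b$ whose conjugacy class is not carried by $\mathcal{K}$, reducing to the cyclically reduced case.

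I do not expect a genuine obstacle here; the proof is essentially a combinatorial reduction, since all of the analytic work—flaring of legality, weak attraction, and avoidance of mutual attracting neighborhoods—has already been absorbed into Propositions \ref{conjflare}, \ref{strictflare}, and \ref{34}. The only point that warrants care is ensuring the thresholds $M$, $N$, $L_1$ are uniform in the conjugacy class (respectively, the word), but this is automatic from the uniform statements of the two-automorphism results and the finiteness of the collection $\{\phi_1,\ldots,\phi_k\}$.
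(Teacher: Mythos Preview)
Your proof is correct and follows the same reduction to the pairwise Proposition~\ref{34} that the paper uses. If anything, your version is slightly more careful: you explicitly separate the case where both failures occur at the same index $i$ (handled by Proposition~\ref{conjflare} or~\ref{strictflare}) from the case of distinct indices, and you take a direct maximum of the finitely many pairwise constants, whereas the paper phrases the argument via sequences and only writes out the distinct-index case.
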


\begin{proof} 
   Suppose that the statement $(1)$ is false. Then there exists a sequence of circuits $\alpha_j$, with $\alpha_j$ not carried by $\mathcal{K}$ and a sequence of integers $n_j$ such that at least $2$ of the $2k$ numbers violate the conclusion with the exponents $n_j$. After passing to subsequence if necessary, and without loss of generality suppose that  both ${||\phi^{n_j}_{1_\#}(\alpha_j)||}_{el}, {||\phi^{n_j}_{2_\#}(\alpha_j)||}_{el} < 3 {||\alpha_j ||}_{el}$ as $j \to \infty$. This violates the 3-outof-4 stretch of Proposition \ref{34}, item (1). 

   To prove $(2)$ let $w_j$ be a sequence of words not in $\F \setminus\bigcup K_t$ such that ${|w_j|}_{el} \to \infty$ as $j\to \infty$ and suppose as in part $(1)$, ${|\phi^{n_j}_{1_\#}(w_j)|}_{el}, {|\phi^{n_j}_{2_\#}(w_j)|}_{el} < 2 {|w_j |}_{el}$ as $j \to \infty$. Then this violates the hallway flaring conclusion in item (2) of Proposition \ref{34}. 
\end{proof}

\begin{proposition}[Cone-bounded hallways strictly flare]
  \label{fbfcbhfc}
   Let $\phi,\psi \in \out$ be exponentially growing automorphisms that are independent relative to an admissible subgroup system $\mathcal{K} = \{[K_1], [K_2], \ldots [K_p]\}$. Suppose $\Phi_s(K_s) = K_s = \Psi_s(K_s),  \Psi_t(K_t) = K_t = \Phi_t(K_t)$ and  $\Phi_s(K_t) = x_t^{-1} K_t x_t$ for some $x_t\in \F$ when $1 \leq t\neq s \leq p$.

  Then there exists some $N\geq 0$ such that at least three of the following four numbers  
    \[ {|\Phi^{n}_{s_\#}(x_t)|}_{el}, {|\Phi^{-n}_{s_\#}(x_t)|}_{el},
  {|\Psi^{n}_{t_\#}(x_t)|}_{el}, {|\Psi^{-n}_{t_\#}(x_t)|}_{el}   \]
   are greater than or equal to $2{|x_t|}_{el}$, for every $n\geq N$ (independent of $s, t$) and for every $t\neq s$.

  \end{proposition}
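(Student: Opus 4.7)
The plan is to adapt the proof of Proposition~\ref{cbhfc} to the two-automorphism setting, replacing the single-automorphism hallways-flare condition (Proposition~\ref{strictflare}) with the $3$-out-of-$4$ hallways stretch of Proposition~\ref{34}(2). Lemma~\ref{conjugatorstretch} tells us that $x_t\notin K_s\cup K_t$ and that the conjugacy class $[x_t]$ is not carried by $\mathcal{K}$; after the same tail-normalization used in the proof of Proposition~\ref{cbhfc}, we may arrange $x_t\in \F\setminus\bigcup_i K_i$.

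Since $\phi$ and $\psi$ are rotationless, exponentially growing, and independent relative to the admissible system $\mathcal{K}$, Proposition~\ref{34}(2) applied to the lifts $\Phi_s,\Psi_t$ yields thresholds $L_{s,t}>0$ and $N_{s,t}>0$ such that any word $w\in\F\setminus\bigcup K_i$ with ${|w|}_{el}\geq L_{s,t}$ has at least three of ${|\Phi_s^{\pm n}(w)|}_{el},{|\Psi_t^{\pm n}(w)|}_{el}$ greater than or equal to $2{|w|}_{el}$ whenever $n\geq N_{s,t}$. If ${|x_t|}_{el}\geq L_{s,t}$ we are done by setting $w=x_t$; the substantive content is the small case ${|x_t|}_{el}<L_{s,t}$, where $2{|x_t|}_{el}$ is bounded by $2L_{s,t}$ and we must produce the three growth statements by hand.

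For the two $\Phi_s$-directions, Lemma~\ref{conjugatorstretch}(2) applied to the pair $(\phi,\Phi_s)$ gives ${|\Phi_s^n(x_t)|}_{el}\to\infty$ exponentially. Applying the same lemma to $(\phi^{-1},\Phi_s^{-1})$, where the identity $\Phi_s^{-1}(K_t)=\Phi_s^{-1}(x_t)K_t\Phi_s^{-1}(x_t)^{-1}$ shows that the relevant conjugator is (up to a $K_t$-tail) $\Phi_s^{-1}(x_t)^{-1}$, gives ${|\Phi_s^{-n}(x_t)|}_{el}\to\infty$ exponentially. For the third direction, since $[x_t]$ is not carried by $\mathcal{K}$ the ``moreover'' part of Theorem~\ref{modwat} applied to $\psi$ shows that $[x_t]$ is weakly attracted to some element of $\mathcal{L}_\mathcal{K}^+(\psi)\cup\mathcal{L}_\mathcal{K}^-(\psi)$, so by Lemma~\ref{legality} and Proposition~\ref{flare} at least one of ${||\psi^m_\#(x_t)||}_{el},{||\psi^{-m}_\#(x_t)||}_{el}$ grows exponentially in $m$. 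Since conjugacy-class length is dominated by word length under any lift, the corresponding quantity among ${|\Psi_t^{\pm n}(x_t)|}_{el}$ also tends to infinity and hence exceeds $2{|x_t|}_{el}$ for all sufficiently large $n$.

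To conclude, take $N$ to be the maximum over the (finitely many) pairs $(s,t)$ with $s\neq t$ of the thresholds produced by the two cases. The principal obstacle is the small case, and within it the $\psi$-side, since $\Psi_t$ fixes $K_t$ and so Lemma~\ref{conjugatorstretch} is not directly available; invoking Theorem~\ref{modwat} is precisely what ensures that the growth under $\Psi_t^{\pm 1}$ is sustained in some fixed direction rather than merely oscillating between forward and backward iterates.
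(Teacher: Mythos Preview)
Your argument is correct and reaches the conclusion, but it takes a different route from the paper's proof. You mimic the structure of Proposition~\ref{cbhfc}: split into a large case $|x_t|_{el}\ge L_{s,t}$ (handled by Proposition~\ref{34}(2)) and a small case, and in the small case you assemble three growth directions by invoking Lemma~\ref{conjugatorstretch} for $\Phi_s^{\pm 1}$ and Theorem~\ref{modwat} for one $\Psi_t$-direction. The paper avoids the case split and Proposition~\ref{34} entirely: it picks the auxiliary word $w=a_sa_t$ with $a_s\in K_s$, $a_t\in K_t$, observes $[w]$ is not carried by $\mathcal{K}$ and hence is weakly attracted in \emph{all four} directions $\mathcal{L}_\mathcal{K}^\pm(\phi),\mathcal{L}_\mathcal{K}^\pm(\psi)$, and deduces that all four of $|\Phi_s^{\pm n}(x_t)|_{el}$, $|\Psi_t^{\pm n}(x_t)|_{el}$ grow exponentially (since the only contribution to electrocuted growth of $\Phi_s^n(w)$ comes from the conjugating product involving $x_t$). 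The contradiction is then immediate.

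What each approach buys: the paper's argument actually proves $4$-out-of-$4$ growth (stronger than stated) and is more self-contained, never needing the length threshold $L_{s,t}$ from Proposition~\ref{34}(2). Your approach is more modular, directly reusing the free-by-free $3$-out-of-$4$ lemma as a black box, and makes the parallel with Proposition~\ref{cbhfc} transparent. One small remark: in your small-case $\Psi_t$-step you could have obtained \emph{both} $\Psi_t$-directions rather than just one, since admissibility condition~(5) applied to $\psi$ and to $\psi^{-1}$ separately gives weak attraction to both $\mathcal{L}_\mathcal{K}^+(\psi)$ and $\mathcal{L}_\mathcal{K}^-(\psi)$; but your weaker claim already suffices for the stated $3$-out-of-$4$ conclusion.
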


 \begin{proof}

 Lemma \ref{conjugatorstretch} tells us that $x_t \notin K_s \cup K_t$ and ${|{\Phi_i^n}_\#(x_j)|}_{el} \to \infty$ as $n \to \infty$. Arguing similarly as in proof of item (2) in Lemma \ref{conjugatorstretch}, we let $w = a_s a_t$ for some nontrivial $a_s \in K_s , a_t \in K_t$.  Then $[w]$ in not carried by $\mathcal{K}$ and hence by property of admissible systems, $[w]$ is weakly attracted to some element in each of $\mathcal{L}_\mathcal{K}^+(\phi), \mathcal{L}_\mathcal{K}^-(\phi), \mathcal{L}_\mathcal{K}^+(\psi), \mathcal{L}_\mathcal{K}^-(\psi)$. Hence all four of ${L}_\mathcal{K}([{\Phi^n_s}_\#(w)]), {L}_\mathcal{K}([{\Phi^{-n}_s}_\#(w)]), {L}^-_\mathcal{K}([{\Psi^n_t}_\#(w)]), {L}^-_\mathcal{K}([{\Psi^{-n}_t}_\#(w)]) \to \infty$ exponentially fast as $n\to \infty$. If ${|{\Phi_s^n}_\#(x_t)|}_{el}$ does not grow exponentially fast as $n\to \infty$, then ${|{\Phi_s^n}_\#(w)|}_{el}$ cannot grow exponentially fast, implying that $L_\mathcal{K}(\phi^n_\#([w])$ does not grow exponentially fast (by Lemma \ref{comparison}), which is in contradiction to the fact that $[w]$ is weakly attracted to some element of $\mathcal{L}_\mathcal{K}^+(\phi)$. Hence ${|{\Phi_s^n}_\#(x_t)|}_{el} \to \infty$ exponentially fast as $n\to \infty$. By a similar argument we get that  ${|{\Phi_s^{-n}}_\#(x_t)|}_{el} \to \infty$ exponentially fast as $n\to \infty$. Our argument also gives us the fact that $[x_t]$ is not carried by $\mathcal{K}$, hence $[x_t]$ is weakly attracted to some element of $\mathcal{L}_\mathcal{K}^+(\psi)$ and some element of $\mathcal{L}_\mathcal{K}^-(\psi)$. This implies  ${|{\Psi_t^n}_\#(x_t)|}_{el},  {|{\Psi_t^{-n}}_\#(x_t)|}_{el} \to \infty$ exponentially fast as $n\to \infty$. If $x_t$ is replaced by a different conjugator $y_t$, then ${|y_t|}_{el} = 1 + {|x_t|}_{el}$ and arguing exactly as we did in proof of Proposition \ref{cbhfc}, the same properties are true for exponential growth of $y_t$ in the  electrified  metric under iteration. 

Now suppose that our conclusion is false. Then, for concreteness assume that there exists some $s, t$ such that  both ${|\Phi^{n}_{s_\#}(x_t)|}_{el}, {|\Psi^{n}_{t_\#}(x_t)|}_{el} < 2{|x_t|}_{el}$ as $n\to \infty$ - this is a contradiction. This completes the proof.  

 \end{proof}

\begin{corollary}
    Let $\phi_1, \ldots \phi_k \in \out$ be a collection of automorphisms satisfying the properties of section \ref{sec:assump} that is pairwise independent relative to an admissible subgroup system  $\mathcal{K} = \{[K_1], [K_2], \ldots [K_p]\}$. Suppose $\Phi_{1s}(K_s) = \Phi_{2s}(K_s) = \ldots =\Phi_{ks}(K_s) = K_s$ and $\Phi_{1s}(K_t) = x_t ^{-1} K_t x_t$ for some $x_t\in \F$ whenever $1 \leq t\neq s \leq p$. Then there exists some $N > 0$ such that  $2k-1$ of the $2k$ numbers  $\{{|\Phi^n_{{is}_\#}(x_t)|}_{el}, {|\Phi^{-n}_{{is}_\#}(x_t)|}_{el}\}_{i=1}^k$ are greater than $2{|x_t|}_{el}$ for every $n > N$ (independent of $s, t$)
\end{corollary}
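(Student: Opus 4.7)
The plan is to prove the stronger statement that all $2k$ numbers ${|\Phi^{\pm n}_{is_\#}(x_t)|}_{el}$, $i = 1,\ldots,k$, in fact grow exponentially to infinity as $n\to\infty$, from which the desired ``$2k-1$ out of $2k$ exceed $2{|x_t|}_{el}$'' conclusion follows immediately for $n$ sufficiently large. The argument closely parallels Proposition \ref{fbfcbhfc}, but invoked separately for each $\phi_i$, with uniformity obtained at the end by a finite maximum.

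The first step is the key algebraic input: the conjugacy class $[x_t]$ is not carried by $\mathcal{K}$. This was already established in item (2) of Lemma \ref{conjugatorstretch}, applied to $\phi_1$ with lift $\Phi_{1s}$, whose associated conjugator is precisely $x_t$.

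For the second step, fix $i\in\{1,\ldots,k\}$. Since $\mathcal{K}$ is an admissible subgroup system for $\phi_i$, the generalized weak attraction theorem (Theorem \ref{modwat}) applied to $\phi_i$ and to $\phi_i^{-1}$ guarantees that $[x_t]$ is weakly attracted to some element of $\mathcal{L}^+_\mathcal{K}(\phi_i)$ under forward iteration and to some element of $\mathcal{L}^-_\mathcal{K}(\phi_i)$ under backward iteration. Combining legality growth (Lemma \ref{legality}), exponent control (Proposition \ref{flare}) and length comparison (Lemma \ref{comparison}) then yields that both ${||\phi^n_{i\#}([x_t])||}_{el}$ and ${||\phi^{-n}_{i\#}([x_t])||}_{el}$ grow exponentially. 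Because ${|\Phi^{\pm n}_{is_\#}(x_t)|}_{el}\geq {||\phi^{\pm n}_{i\#}([x_t])||}_{el}$ (the length of any particular representative bounds below the length of the shortest representative), it follows that both ${|\Phi^{n}_{is_\#}(x_t)|}_{el}$ and ${|\Phi^{-n}_{is_\#}(x_t)|}_{el}$ grow exponentially as $n\to\infty$.

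Finally, to obtain uniformity in $s, t$, observe that for each of the finitely many pairs $(s, t)$ with $1 \leq s\neq t\leq p$, the exponential growth in the second step produces some $N_{s,t}$ past which all $2k$ quantities strictly exceed $2{|x_t|}_{el}$. Taking $N$ to be the maximum of these finitely many $N_{s,t}$ yields the required uniform $N$. The main obstacle in carrying this out is verifying that admissibility of $\mathcal{K}$ for $\phi_i$ implies the same for $\phi_i^{-1}$, so that backward weak attraction is available; but this is immediate from the definition \ref{SA}, since $\mathcal{L}^+_\mathcal{K}(\phi_i^{-1}) = \mathcal{L}^-_\mathcal{K}(\phi_i)$ and each listed condition is preserved under the swap $\phi_i \leftrightarrow \phi_i^{-1}$.
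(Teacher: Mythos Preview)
Your proof is correct and follows essentially the same route as the paper: both arguments establish the stronger fact that all $2k$ quantities ${|\Phi^{\pm n}_{is_\#}(x_t)|}_{el}$ grow exponentially, from which the $(2k-1)$-out-of-$2k$ conclusion is immediate. The paper phrases the final step as a contradiction (assuming two quantities stay bounded and invoking the exponential growth from the proof of Proposition~\ref{fbfcbhfc}), while you take the equivalent direct route of choosing $N$ as a finite maximum over the pairs $(s,t)$; your treatment is slightly more uniform in that you apply the ``$[x_t]$ not carried by $\mathcal{K}$ $\Rightarrow$ weak attraction for $\phi_i^{\pm 1}$'' argument to all $i$ alike, whereas the paper (via Proposition~\ref{fbfcbhfc}) handles the generator producing the conjugator $x_t$ through the auxiliary word $w=a_sa_t$ before passing to the $[x_t]$ argument for the remaining generators.
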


\begin{proof}
    We  prove this by contradiction. First note that ${|\Phi^{n}_{is_\#}(x_t)|}_{el} \to \infty$  exponentially fast as $n\to \infty$ as seen in proof of Proposition \ref{fbfcbhfc}.  Suppose that the conclusion is false. Without loss of generality suppose ${|\Phi^{n_t}_{1s_\#}(x_t)|}, {|\Phi^{n_t}_{2s_\#}(x_t)|} < 2 {|x_t|}_{el}$ as $n_t\to \infty$. This gives us a contradiction. 
\end{proof}
\subsection{Relatively hyperbolic extensions from admissible subgroup systems}

The lemma below will provide one direction of the Theorem \ref{main1}. 

\begin{lemma}\label{fbfrh}
   Let $\phi_1, \ldots \phi_k \in \out$ be a collection of automorphisms as in section \ref{sec:assump} that are independent  relative to an admissible subgroup system  $\mathcal{K} = \{[K_1], [K_2], \ldots [K_p]\}$. Then,

  \begin{enumerate}
  	\item There exists some $M\geq 1$ such that for all $m_i \geq M$, $Q= \langle \phi_1^{m_1},\cdots,  \phi_k^{m_k} \rangle$ is a free group.  Moreover, every element of the group $Q$ is exponentially growing and satisfies the standing assumptions of section \ref{sec:assump}.
 	\item $\F\rtimes \widehat{Q}$ is strongly relatively hyperbolic relative to the
  collection of subgroups $\{K_s \rtimes \widehat Q_s \}$ $s\in \{1, \cdots, p\}$, where  $\widehat Q_s$  is a lift of $Q_s$ that preserves $K_s$
  and $\widehat{Q}$ is any lift of $Q$.
  	
  	  	\item Let  $\xi = \phi^{m_1}\psi^{n_1}\ldots \psi^{n_s}$ for some $\phi, \psi \in Q$.  When $M$ is sufficiently large, every generic leaf of every element of $\mathcal{L}_\mathcal{K}^+(\xi)$ is in $W^+_\phi$ if $m_1 > 0$ and in $W^-_\phi$ if $m_1 < 0$. Here $W^+_\phi, W^-_\phi$ are contained in the neighborhoods from item (2) of Lemma \ref{disjointnbds} and have the same properties as those.
  	%\item Any two distinct elements of $Q$, which are not powers of each other, are independent relative to $\mathcal{K}$.  	

  \end{enumerate}
\end{lemma}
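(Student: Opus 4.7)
The plan is to establish the three parts in order, with a ping-pong setup on the space of lines $\mathcal{B}$ driving part (1), while parts (2) and (3) are then consequences of this setup combined with the flaring machinery already developed.

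\textbf{Part (1).} Applying Theorem \ref{modwat} to each $\phi_i$ separately, I obtain a uniform $M_0$ such that for every line $\ell$ and every $m \geq M_0$, one of the following holds: $\ell \in V^-_{\phi_i}$, $\ell$ is carried by $\mathcal{K}$, or $\phi_i^m(\ell) \in V^+_{\phi_i}$; and symmetrically for $\phi_i^{-1}$. By Lemma \ref{disjointnbds}(2) the neighborhoods $V^\pm_{\phi_i}$ can be chosen pairwise disjoint, and I take $M \geq M_0$. A standard ping-pong argument then shows that any nontrivial reduced word $w = \phi_{i_r}^{\epsilon_r n_r} \cdots \phi_{i_1}^{\epsilon_1 n_1}$ in the generators $\phi_i^{m_i}$ sends a line $\ell$ chosen outside $\mathcal{K}$ and outside $V^{-\epsilon_1}_{\phi_{i_1}}$ into $V^{\epsilon_r}_{\phi_{i_r}}$; pairwise disjointness forces $w(\ell) \neq \ell$ and hence $w \neq 1$, proving freeness of $Q$. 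The same iteration drives every conjugacy class not carried by $\mathcal{K}$ into some $V^\pm_\bullet$ under $w$-iteration, yielding exponential growth of $w$ and an attracting lamination of $w$ not carried by $\mathcal{K}$. Invariance $w([K_s])=[K_s]$ is inherited from the generators, and disjoint attracting/repelling neighborhoods for $w$ may be chosen inside the $V^\pm_\bullet$'s; applying Theorem \ref{modwat} to $w$ itself delivers the weak-attraction clause. Thus every element of $Q$ satisfies the standing assumptions of Section \ref{sec:assump} with the same $\mathcal{K}$.

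\textbf{Part (2).} I invoke the simplified Mj--Reeves criterion, Theorem \ref{mjr}. Malnormality and quasiconvexity of $\{K_s\}$ in $\F$ are part of the admissible hypothesis, and invariance of each $[K_s]$ under $Q$ is immediate from the generators, so $\{K_s \rtimes \widehat Q_s\}$ is the correct candidate parabolic collection. The two remaining conditions on the induced tree of coned-off spaces are hallway flaring and cone-bounded hallway flaring. Hallway flaring follows from Corollary \ref{2k-1st} applied to the lifts $\Phi_i$ of the generators of $Q$: along any bi-infinite geodesic in the Cayley graph of $Q$, the $(2k{-}1)$-out-of-$2k$ stretch guarantees that some side of any sufficiently long hallway doubles in electrocuted length after boundedly many steps. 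Cone-bounded hallway flaring follows from the corollary of Proposition \ref{fbfcbhfc} applied to the conjugators $x_t$ relating $\Phi_{is}(K_t)$ to $K_t$. Theorem \ref{mjr} then yields strong relative hyperbolicity of $\F \rtimes \widehat Q$ relative to $\{K_s \rtimes \widehat Q_s\}$.

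\textbf{Part (3).} Fix $\xi = \phi^{m_1}\psi^{n_1}\cdots\psi^{n_s}$ with $m_1 \geq M$ (the case $m_1 \leq -M$ is symmetric), and let $\lambda$ be a generic leaf of some $\Lambda^+ \in \mathcal{L}_\mathcal{K}^+(\xi)$. Then $\lambda$ is the weak limit of iterates $\xi^N_\#(\gamma)$ for a suitable $\gamma$ not carried by $\mathcal{K}$. Let $W^\pm_\phi$ be slight enlargements of $V^\pm_\phi$, still pairwise disjoint among themselves and with the other $V^\pm_{\phi_i}$. The ping-pong from part (1) shows that $\eta(\gamma) := (\psi^{n_1}\cdots\psi^{n_s})(\gamma)$ lies outside $V^-_\phi$, so applying $\phi^{m_1}$ with $m_1 \geq M$ places $\xi(\gamma) \in V^+_\phi$. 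Iterating, every $\xi^N_\#(\gamma)$ lies in $V^+_\phi$, so the weak limit $\lambda$ lies in $\overline{V^+_\phi} \subset W^+_\phi$.

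\textbf{Main obstacle.} The central technical issue is uniformity of the constant $M$: it must be chosen independently of the word length and combinatorial shape of $w \in Q$ in part (1), and independently of the inner factor $\eta$ in part (3). This is precisely what Theorem \ref{modwat} provides --- the uniform attraction exponent --- and the slight enlargement $V^\pm_\bullet \subset W^\pm_\bullet$ in part (3) absorbs any overflow produced by a single application of the non-leading part of the word. Once these uniformities are secured, all three parts follow from the dynamical and flaring apparatus assembled in Sections \ref{SRH} and \ref{MtoRH}.
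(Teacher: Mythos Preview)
Your proof is correct and follows essentially the same strategy as the paper. A few minor comparative remarks:

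For part (1), the paper simply cites ``the Schottky-type argument in the three-of-four stretch Proposition~\ref{34} using Corollary~\ref{2k-1st}''; this is a length-based ping-pong (reduced words in the $\phi_i^{m_i}$ stretch electrocuted lengths, hence are nontrivial), whereas you run the ping-pong directly on the space of lines $\mathcal{B}$ via the disjoint neighborhoods $V^\pm_{\phi_i}$ furnished by Lemma~\ref{disjointnbds} and Theorem~\ref{modwat}. Both are standard and equivalent. For the ``moreover'' clause, the paper deduces exponential growth of every $w\in Q$ directly from conjugacy flaring (item (1) of Proposition~\ref{34}); your route via the ping-pong dynamics is slightly more circuitous but arrives at the same place. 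One small caution: you write ``applying Theorem~\ref{modwat} to $w$ itself'' for the weak-attraction clause, but Theorem~\ref{modwat} presupposes admissibility of $\mathcal{K}$ for $w$, which is exactly what you are verifying---the correct reasoning (which the paper gives) is that exponential growth of $L_{\mathcal{K}}$ under $w$-iteration forces attraction to some lamination not carried by $\mathcal{K}$.

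For part (2), you and the paper invoke identical ingredients: Corollary~\ref{2k-1st} (resp.\ Proposition~\ref{34}(2)) for hallway flaring, Proposition~\ref{fbfcbhfc} and its corollary for cone-bounded hallways, and then Theorem~\ref{mjr}.

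For part (3), the paper gives no proof at all; your argument via iterated ping-pong and the enlargement $V^+_\phi\subset W^+_\phi$ to absorb the weak closure is a reasonable way to supply the missing justification.
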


 \begin{proof}
 Proof of (1) follows from generalizing the Schottky-type argument in the three-of-four stretch Proposition \ref{34} using Corollary \ref{2k-1st}.

 To prove (2) note that the hallway flaring condition is proven in item (2) of Proposition \ref{34}.
  The cone-bounded strictly flare condition is Proposition \ref{fbfcbhfc}. We apply Lemma \ref{mjr} to get the conclusion and conclude the proof of (2).
  
 Conjugacy flaring condition for $Q$ (item (1) of Proposition \ref{34}) shows that every element of $Q$ is exponentially growing (relative to $\mathcal{K}$) \emph{i.e.} every conjugacy class not carried by $\mathcal{K}$ grows exponentially relative to $\mathcal{K}$ under iterates of every element of $Q$, hence will be weakly attracted to some attracting lamination not carried by $\mathcal{K}$. Further, since each $\phi_1, \ldots \phi_k $ leave each component of $\mathcal{K}$ invariant, all elements of $Q$ will also do do the same. This completes proof of second part of (2).
 
 \end{proof}

\begin{corollary}\cite[Theorem 5.2]{BFH-97}\label{bfhhyp}
  Suppose $\phi,\psi$ are irreducible  hyperbolic outer automorphisms which do not have a common power. Then there
  exists some $M>0$ such that for every $m, n \geq M$ the group $Q:=\langle \phi^m, \psi^n \rangle$ is a free group of
  rank $2$ and the extension group $\F\rtimes \widehat{Q}$ is word hyperbolic.
 \end{corollary}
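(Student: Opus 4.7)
The plan is to reduce the corollary to a direct application of Lemma~\ref{fbfrh} with the admissible subgroup system chosen to be $\mathcal{K} = \emptyset$, so that the conclusion of relative hyperbolicity collapses to ordinary word hyperbolicity.

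First, I would pass to rotationless powers: by Feighn--Handel, some uniform power of any $\phi \in \out$ is rotationless, and irreducibility, hyperbolicity, and the no-common-power hypothesis are all preserved by passing to positive powers. So, after replacing $\phi, \psi$ by these powers, we may assume both are rotationless, exponentially growing, irreducible, hyperbolic outer automorphisms with no common power.

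Next, I would verify that the empty subgroup system $\mathcal{K} = \emptyset$ is admissible for each of $\phi$ and $\psi$ in the sense of \ref{SA}. Malnormality, invariance, and the carrying statement in item (3) are vacuous. Since $\phi$ is fully irreducible and hyperbolic, it has a unique attracting lamination $\Lambda^+_\phi$ and a unique repelling lamination $\Lambda^-_\phi$ which are filling, so neither is carried by $\emptyset$; hence $\mathcal{L}^\pm_\mathcal{K}(\phi)$ is nonempty, and similarly for $\psi$. Disjointness $V^+ \cap V^- = \emptyset$ follows from the fact that no leaf of $\Lambda^-$ is weakly attracted to $\Lambda^+$ after choosing generic-leaf neighborhoods sufficiently long. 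The final condition, that every conjugacy class is weakly attracted to $\Lambda^+_\phi$, is exactly a restatement of hyperbolicity together with full irreducibility (equivalently, the non-attracting subgroup system $\mathcal{A}_{na}(\Lambda^+_\phi)$ is trivial, since $\phi$ is nongeometric atoroidal and $\Lambda^+_\phi$ is filling).

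Then I would verify that $\phi$ and $\psi$ are independent relative to $\mathcal{K} = \emptyset$ in the sense of Definition~\ref{relind}. This is the standard fact that two fully irreducible atoroidal outer automorphisms with no common power have no pair of asymptotic generic leaves among their attracting/repelling laminations: a common asymptotic leaf would force the (unique) filling laminations to coincide and, together with hyperbolicity/atoroidality and uniqueness of dual laminations, would force a common power. The main delicate point of the whole argument is this asymptoticity dichotomy, but for fully irreducible hyperbolic automorphisms it is well established (e.g.\ via the action on $\partial\F$ or via the uniqueness of attracting fixed points in $\PMF$-analogs for $\out$); I would cite it here rather than reprove it.

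Finally, I would apply Lemma~\ref{fbfrh} to the collection $\{\phi, \psi\}$ with $\mathcal{K} = \emptyset$. Item~(1) of that lemma yields $M > 0$ such that for all $m, n \geq M$ the group $Q = \langle \phi^m, \psi^n\rangle$ is free (of rank $2$, since $\phi, \psi$ have no common power and generate a subgroup of $\out$ with a ping-pong dynamics coming from the $3$-out-of-$4$ stretch of Proposition~\ref{34}). Item~(2) then yields that $\F \rtimes \widehat{Q}$ is strongly hyperbolic relative to the collection $\{K_s \rtimes \widehat{Q}_s\}$ indexed by $\mathcal{K}$. Since $\mathcal{K} = \emptyset$, this collection of peripheral subgroups is empty, and relative hyperbolicity with respect to the empty family is exactly word hyperbolicity. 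This completes the proof.
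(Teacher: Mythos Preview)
Your proposal is correct and takes essentially the same approach as the paper: the paper's entire proof is the single line ``Apply Theorem~\ref{fbfrh} with $\mathcal{K}=\emptyset$ and $k=2$.'' Your write-up simply fills in the hypothesis checks (rotationless powers, admissibility of $\emptyset$, independence via non-asymptotic laminations) that the paper leaves implicit.
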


 \begin{proof}
 Apply Theorem \ref{fbfrh} with $\mathcal{K}=\emptyset$ and $k=2$.
 \end{proof}

The following corollary has multiple appearances in the literature due to various authors. See \cite{GMj-22} for a more general form and \cite{Uya-17} for a similar result.

\begin{corollary}

Let $\phi, \psi\in \out$ be atoroidal and let $\mathcal{L}^+(\phi), \mathcal{L}^-(\phi) $ be collections of all attracting and repelling laminations, respectively  of $\phi$ and $\mathcal{L}^+(\psi), \mathcal{L}^-(\psi)$ be collections of all attracting and repelling laminations of $\psi$. Suppose that $\phi, \psi$  are independent relative to $\mathcal{K} = \emptyset$. 

Then there exists $M>0$ such that $Q=\langle \phi^m, \psi^n \rangle$ is a free group of rank two and the extension group $\F \rtimes \widehat{Q}$ is a hyperbolic group for any lift $\widehat{Q} \leq \aut$ of $Q$.
\end{corollary}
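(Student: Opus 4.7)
The plan is to apply Lemma~\ref{fbfrh} with $\mathcal{K} = \emptyset$. When the admissible subgroup system is empty, the peripheral collection $\{K_s \rtimes \widehat{Q}_s\}_{s=1}^{p}$ in the conclusion of Lemma~\ref{fbfrh}(2) is empty, and strong relative hyperbolicity with respect to an empty collection of peripheral subgroups is simply Gromov hyperbolicity. Thus the entire argument comes down to checking that the hypotheses of Lemma~\ref{fbfrh} are satisfied.

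First I would verify that $\emptyset$ is an admissible subgroup system for each of $\phi$ and $\psi$ in the sense of Section~\ref{sec:assump}. Conditions (1) and (3) are vacuous. Since $\mathcal{L}^{\pm}_{\emptyset}(\phi) = \mathcal{L}^{\pm}(\phi)$ is nonempty (both $\phi$ and $\psi$ are exponentially growing), condition (2) holds. Condition (4) --- disjointness of $V^+$ and $V^-$ --- is a general feature of any rotationless exponentially growing automorphism, as recalled in Section~\ref{sec:5}, since no leaf of a repelling lamination of $\phi$ can be asymptotic to a leaf of an attracting lamination of $\phi$. The only genuinely nontrivial condition is (5): every conjugacy class of $\F$ must be weakly attracted to some element of $\mathcal{L}^{+}(\phi)$, and similarly for $\psi$. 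This is where hyperbolicity enters: since $\phi$ is rotationless and atoroidal, standard CT theory guarantees there are no periodic conjugacy classes and no polynomial-growth conjugacy classes, so every conjugacy class grows exponentially. A conjugacy class failing (5) would, by Lemma~\ref{NAS}(2), be simultaneously carried by $\mathcal{A}_{na}(\Lambda^{+})$ for every $\Lambda^{+}\in\mathcal{L}^{+}(\phi)$; but the weak attraction theorem (Lemma~\ref{WAT}) applied to both $\phi$ and $\phi^{-1}$ then forces such a class to be carried by a common subsystem incompatible with exponential growth, a contradiction to hyperbolicity.

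With admissibility checked and the independence hypothesis given as part of the statement, Lemma~\ref{fbfrh} produces $M > 0$ such that for all $m, n \geq M$ the group $Q = \langle \phi^m, \psi^n\rangle$ is free of rank two and $\F \rtimes \widehat{Q}$ is strongly hyperbolic relative to the empty collection --- that is, Gromov hyperbolic. The main obstacle is the verification of condition (5) from hyperbolicity alone; everything else is routine unpacking of definitions. This step in turn reduces to a standard structural fact about rotationless hyperbolic outer automorphisms that combines Lemma~\ref{NAS} with Lemma~\ref{WAT}.
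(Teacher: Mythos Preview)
Your approach is correct and essentially identical to the paper's: the paper's proof is the single line ``Apply Theorem~\ref{fbfrh} with $\mathcal{K} = \emptyset$ and $k=2$.'' You simply unpack in more detail the verification that $\emptyset$ is an admissible subgroup system for a hyperbolic $\phi$, which the paper leaves implicit; your justification of condition~(5) via hyperbolicity (no polynomially growing conjugacy classes) is the right observation.
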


\begin{proof}
Apply Theorem \ref{fbfrh} with $\mathcal{K} = \emptyset$ and $k=2$.
\end{proof}

We end this section with a very special example, one that highlights the importance of the technique developed in this work.
\begin{example}
Let  $\F$ be a rank 6 free group generated by $a, b, c, d, e, f$ and let $F_1 = \langle a, b \rangle $ and let $F_2 = \langle c, d, e, f\rangle$. Choose $\phi,\psi\in \out$ in such a way that
\begin{enumerate}
\item $\phi([F_1]) = [F_1]$ and the restriction of $\phi$ to $F_1$ is polynomially growing.
\item $\phi$ is exponentially growing  and has a nongeometric dual lamination pair $\Lambda^\pm_\phi$ which which falls over to $F_1$, i.e. free factor support of $\Lambda^\pm_\phi$ is $[\F]$ and $\mathcal{A}_{na}(\Lambda^+_\phi) = [F_1]$.
\item $\psi([F_1]) = [F_1]$ and the restriction of $\psi$ to $[F_1]$ is fully irreducible and geometric.
\item $\psi$ preserves $[F_2]$ and its restriction is fully irreducible and nongeometric with dual lamination pair $\Lambda^\pm_\psi$ with its free factor support being $[F_2]$.
%\item Additionally suppose that $\Lambda^\pm_\phi$ and $\Lambda^\pm_\psi$ do not have asymptotic leaves.
\end{enumerate}

\end{example}
Theorem \ref{fbfrh} tells us that for some sufficiently large $M$, the group $Q = \langle \phi^m, \psi^n\rangle$ is a free group of rank 2 if $m, n \geq M$. If one tries to find subgroups of $\F$ so that the restriction of $Q$ is polynomially growing and then cone-off to show relative hyperbolicity, it might not be feasible in this case.

But our conditions here imply that $[F_1] $ is an admissible subgroup system and the laminations $\Lambda^\pm_\phi, \Lambda^\pm_\psi$ do not have asymptotic generic leaves (since the free factor supports are distinct). Theorem \ref{fbfrh} now shows that $\F\rtimes \widehat{Q}$ hyperbolic relative to $F_1 \rtimes \widehat{Q}_1$, where $\widehat{Q}_1$ is a lift that preserves $F_1$ and $\widehat{Q}$ is any lift of $Q$.

\subsection{Construction of an admissible subgroup system: the non-attracting sink}\label{admissconst}

Now we will give the details of how to construct a malnormal subgroup system which satisfies  the hypotheses of Theorem  \ref{fbcrh} and of Theorem \ref{fbfrh}. The motivation behind the idea is the notion of the \emph{meet of free factor system} given in \cite[Section 2]{BFH-00} and in  Corollaries \ref{relhypext} and \ref{GL}. Note that an admissible subgroup system may not be unique, and our method shows one of the ways in which we can construct such a subgroup system.  

 The idea is that given $\phi\in\out$ which is  exponentially growing, the malnormal subgroup system given in Section \ref{SA} should intuitively be a finite intersection of closed sets so that the lines and conjugacy classes carried by the malnormal subgroup system will be carried by the nonattracting system of every element of $\mathcal{L}^+_\mathcal{K}(\phi)$.

\noindent \textbf{Constructing $\mathcal{K}$:}  Let $\psi\in\out$ be exponentially growing and $\Lambda_1, \Lambda_2$ be attracting laminations of $\psi$ and $\mathcal{K}_1, \mathcal{K}_2 $ be the nonattracting subgroup systems of $\Lambda_1, \Lambda_2$ respectively. We wish to describe a new malnormal subgroup system which is in a sense ``intersection''  of $\mathcal{K}_1, \mathcal{K}_2$, in the sense that a conjugacy class is carried by both $\mathcal{K}_1$ and $\mathcal{K}_2$ if and only if it is carried by the ``intersection''.

\begin{definition} The \emph{meet} $\mathcal{K}_1 \bigwedge \mathcal{K}_2$ of subgroup systems $\mathcal{K}_1 $ and $\mathcal{K}_2$ is given by,

\[ \mathcal{K}_1 \bigwedge \mathcal{K}_2  = \{ [A \cap B^w] : [A]\in \mathcal{K}_1, [B] \in \mathcal{K}_2, w\in \F, A \cap B^w \neq \{id\} \}\]
where $B^w$ denotes the group $w^{-1} B w$, when $B$ is a subgroup of $\F$ and $w\in \F$.
\end{definition}

 Let $[\langle \rho_1 \rangle]\in\mathcal{K}_1$ be such that $[c_1]$ is the conjugacy class representing the unique closed indivisible Nielsen path associated to $\Lambda_1$, when $\Lambda_1$ is geometric. Let $\Lambda_2$ be any other attracting lamination. Geometricity of lamination implies that $[\rho_1]$ is nontrivial, $[c_1]$ is carried by $\mathcal{F}_{supp}(\Lambda_1)$ (\cite[Proposition 2.18]{HM-20}) and $\Lambda_1$ is always a bottommost lamination (see part (3) of \cite[Proposition 2.15]{HM-20}). Bottommost property ensures that no generic leaf of $\Lambda_1$ is weakly attracted to $\Lambda_2$.
 So $\mathcal{F}_{supp}(\Lambda_1)$ is carried by $\mathcal{A}_{na}(\Lambda_2)$. When we take $\mathcal{K}_1 \bigwedge \mathcal{K}_2$, one of the components of this meet carries  $[\langle c_1 \rangle]$. In fact, once we prove malnormality of $\mathcal{K}_1 \bigwedge \mathcal{K}_2$, only one component of $\mathcal{K}_1 \bigwedge \mathcal{K}_2$ can carry $[c_1]$ and that component is not a free factor.

 To depict this situation, think of  a situation where some CT representing $\psi$ has exactly three strata and all of them are EG. Let $\Lambda_1, \Lambda_2, \Lambda_3$ be associated attracting laminations such that $\Lambda_1\subset \Lambda_2 \subset \Lambda_3$ with nonattracting subgroup systems $\mathcal{K}_1, \mathcal{K}_2, \mathcal{K}_3$. Suppose we choose to work with $\Lambda_1, \Lambda_3$ and compute $\mathcal{K}_1\bigwedge \mathcal{K}_3$. If $\Lambda_2$ was geometric and $\Lambda_1, \Lambda_3$ were non-geometric, then by our definition, $\mathcal{K}_1\bigwedge\mathcal{K}_3$ would be empty and we  would fail to trap the conjugacy class corresponding to the unique closed indivisible Nielsen path associated with $\Lambda_2$. Similarly if $\Lambda_1, \Lambda_2$ were both geometric, we would be in trouble if the one decided to take $\mathcal{K}_1\bigwedge \mathcal{K}_3$.  These are the situations which are prevented from happening because if $\Lambda_2$ is geometric, then it is necessarily bottommost.

We use the the notation $A \leq B$ to mean $A$ is a subgroup of $B$ (not necessarily proper), when $A, B$ are both subgroups of $\F$.

\begin{proposition}\label{meetofnas}
Let $\psi\in\out$ be exponentially growing and $\Lambda_1, \Lambda_2$ be attracting laminations of $\psi$. Let $\mathcal{K}_1, \mathcal{K}_2 $ be the nonattracting subgroup systems of $\Lambda_1, \Lambda_2$ respectively. If $\mathcal{K}_1 \bigwedge \mathcal{K}_2 \neq \{\text{id.}\}$, then
\begin{enumerate}
	\item $\mathcal{K}_1 \bigwedge \mathcal{K}_2$ is a malnormal subgroup system.
	\item A conjugacy class $\alpha$ is carried by $\mathcal{K}_1 \bigwedge \mathcal{K}_2$ if and only if it is carried by both $\mathcal{K}_1$ and $\mathcal{K}_2$.
	\item A conjugacy class $\alpha$ is not carried by $\mathcal{K}_1 \bigwedge \mathcal{K}_2$  if and only if $\alpha$ is weakly attracted to $\Lambda_1$ or $\Lambda_2$.
	%\item $\mathcal{K}_1 \bigwedge \mathcal{K}_2$ is the smallest malnormal subgroup system which satisfies item (3).
	\item $\mathcal{K}_1 \bigwedge \mathcal{K}_2$ is $\psi$-invariant.
	
\end{enumerate}
\end{proposition}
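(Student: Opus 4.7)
The plan is to prove parts $(1)$ through $(4)$ in order; malnormality in $(1)$ contains the bulk of the work, while $(2)$--$(4)$ follow relatively directly from $(1)$ combined with Lemma~\ref{NAS}. Fix once and for all chosen representatives $A$ of each conjugacy class in $\mathcal{K}_1$, representatives $B$ of each class in $\mathcal{K}_2$, and for each conjugacy class in $\mathcal{K}_1 \bigwedge \mathcal{K}_2$ a chosen representative of the form $C = A \cap B^w$.

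For part $(1)$, I split malnormality into two aspects. First, each chosen representative $C = A \cap B^w$ is itself malnormal in $\F$: taking $s=t$ in the malnormality hypothesis on $\mathcal{K}_1$ shows each $A$ is malnormal, hence each conjugate $B^w$ is malnormal, and the intersection of two malnormal subgroups of $\F$ is malnormal (if $y \in C^x \cap C$ is nontrivial, then $y \in A^x \cap A$ and $y \in (B^w)^x \cap B^w$, forcing $x \in A \cap B^w = C$). Next, to rule out nontrivial conjugate intersections between two chosen representatives $C_1 = A_1 \cap B_1^{w_1}$ and $C_2 = A_2 \cap B_2^{w_2}$ of a priori distinct classes, suppose $y \in C_1^x \cap C_2$ is nontrivial. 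Then $y \in A_1^x \cap A_2$, so by malnormality of $\mathcal{K}_1$, $A_1 = A_2 =: A$ and $x \in A$. Setting $y' = w_2 y w_2^{-1}$ and $u = w_1 x w_2^{-1}$, the inclusion $y \in (B_1^{w_1})^x \cap B_2^{w_2}$ translates into $y' \in B_1^u \cap B_2$, and malnormality of $\mathcal{K}_2$ then forces $B_1 = B_2 =: B$ and $b := u \in B$. Substituting $w_1 x = bw_2$ yields $C_1^x = A \cap B^{w_1 x} = A \cap B^{bw_2} = A \cap B^{w_2} = C_2$, so $[C_1] = [C_2]$ in the meet and the two representatives coincide as chosen representatives, completing malnormality.

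Part $(2)$ is essentially definitional: a representative of $\alpha$ in $A \cap B^w$ lies in both $A$ and $B^w$, so $\alpha$ is carried by both $\mathcal{K}_1$ and $\mathcal{K}_2$; conversely, if $\alpha_1 \in A$ and $\alpha_2 = w^{-1}\alpha_1 w \in B$ are conjugate representatives of $\alpha$, then $\alpha_1 \in A \cap B^{w^{-1}}$, an element of the meet. Part $(3)$ follows immediately by combining $(2)$ with Lemma~\ref{NAS}(2), which identifies conjugacy classes carried by $\mathcal{K}_i$ as precisely those not attracted to $\Lambda_i$. For part $(4)$, each $\mathcal{K}_i$ is $\psi$-invariant as a standard consequence of $\psi(\Lambda_i) = \Lambda_i$ and the characterization in Lemma~\ref{NAS}(2); so for any lift $\Psi$ of $\psi$, there exist $u, v \in \F$ with $\Psi(A) = A^u$ and $\Psi(B) = B^v$, and a direct computation shows $\Psi(A \cap B^w) = A^u \cap B^{v\Psi(w)}$ is $\F$-conjugate to $A \cap B^{v\Psi(w) u^{-1}}$, another representative in the meet. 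The main technical obstacle is the bookkeeping in the malnormality argument of part $(1)$ — carefully tracking how the hypothesized conjugator $x$ interacts with the elements $w_1, w_2$ used to form representatives of the meet — but the computation is transparent once one invokes the malnormality of $\mathcal{K}_1$ and $\mathcal{K}_2$ separately to pin down the ambient representatives $A$ and $B$.
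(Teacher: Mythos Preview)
Your argument is largely correct but takes a different route from the paper, and there is one genuine gap you should address.

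\textbf{The gap.} In part~(1) you verify malnormality cleanly, but you never check that $\mathcal{K}_1\bigwedge\mathcal{K}_2$ is a \emph{subgroup system} in the paper's sense, i.e.\ a \emph{finite} collection of conjugacy classes of finite-rank subgroups. As $w$ ranges over $\F$ the set $\{[A\cap B^w]\}$ is a priori infinite, and nothing you wrote bounds it. The paper handles this structurally: it writes each $\mathcal{K}_i=\mathcal{F}_i\cup[\langle\rho_i\rangle]$ with $\mathcal{F}_i$ a free factor system (using the geometric-model description in \S\ref{sec:6}), so that $\mathcal{K}_1\bigwedge\mathcal{K}_2=\mathcal{F}_1\bigwedge\mathcal{F}_2\cup[\langle\rho_1\rangle]\cup[\langle\rho_2\rangle]$, and then invokes the finiteness of the meet of free factor systems from \cite[Section~2]{BFH-00}. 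Your purely algebraic approach can be rescued: for finitely generated $A,B<\F$ there are only finitely many double cosets $AwB$ with $A\cap B^w\neq 1$ (e.g.\ via Stallings graphs and the finiteness of the fibre product), but you need to say so.

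\textbf{Comparison of approaches.} Apart from finiteness, your argument for (1) is more elementary and more general than the paper's --- it works verbatim for the meet of any two malnormal subgroup systems, without invoking the special structure of nonattracting systems or the discussion of the Nielsen paths $\rho_i$. The paper's structural route, by contrast, is what makes finiteness immediate. For (2)--(3) the two proofs are essentially equivalent, just with the logical order swapped. For (4) you give a direct computation that $\Psi(A\cap B^w)$ is conjugate to some $A\cap B^{w'}$; the paper instead characterizes $\mathcal{K}_1\bigwedge\mathcal{K}_2$ via item~(3) (carrying $[g]$ $\Leftrightarrow$ $[g]$ not attracted to $\Lambda_1$ or $\Lambda_2$) and observes this characterization is $\psi$-equivariant. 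Your computation is fine, though to conclude invariance rather than just containment you should remark that the same applies to $\psi^{-1}$ (or that, once finiteness is established, an injective self-map of a finite set is a bijection).
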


 \begin{proof} 
 Let $f:G\to G$ be a CT map for $\psi$ and  suppose $\Lambda_1$ has height $r$  and $\Lambda_2$ has height $s$. If $\Lambda_1$ is geometric, we let $\rho_1$ denote the unique closed indivisible Nielsen path of height $r$. Similarly define $\rho_2$. We use $[c_i]$ to denote the conjugacy class representing $\rho_i$, if $\rho_i$ is non-trivial.  Let $\mathcal{K}_i = \mathcal{F}_i \cup [F_{m_i}]$ (where $F_{m_i}$ is trivial if $\Lambda_i$ is nongeometric) for $i= 1, 2$ (see the last paragraph of the Section \ref{sec:6}).
 
 When $F_{m_i}$ is nontrivial, it is always carried by a single component $\mathcal{F}_j$, where $i\neq j$. To see why, assume for concreteness that $F_{m_1}$ is nontrivial. Recall that $[F_{m_1}]$ carries $[\langle c_1 \rangle]$. Firstly observe that, $\psi$ restricted to $F_{m_1}$ having polynomial growth implies that $F_{m_1}$ must be carried by $\mathcal{K}_2$. By Lemma \ref{NAS} - item (6), this means that a single component of $\mathcal{K}_2$ carries $[F_{m_1}]$. Supposing that component was $[F_{m_2}]$ (implying that $[\langle c_1 \rangle]$ is carried by $[F_{m_2}]$), we argue to a contradiction using heights. 
  If $r < s$, then 
 $[\langle c_1 \rangle]$ is carried by $\pi_1(G_{s-1}) \sqsubset \mathcal{F}_2$ and we break malnormality of $\mathcal{K}_2$. Next, suppose $r > s$, then $[\langle c_2 \rangle]$ is carried by $\pi_1(G_{r-1}) \sqsubset \mathcal{F}_1$. Hence $[F_{m_2}]$ (which also carries $[\langle c_1 \rangle]$) must be carried by $\mathcal{F}_1$ - this violates malnormality of $\mathcal{K}_1$. In conclusion, $[F_{m_1}]$ is carried by a single component of $\mathcal{F}_2$. 

It follows that $\mathcal{K}_1 \bigwedge \mathcal{K}_2 = \mathcal{F}_1\bigwedge \mathcal{F}_2 \cup [F_{m_1}] \cup [F_{m_2}]$. Using the construction of \cite[Section 2]{BFH-00}, we know that $\mathcal{F}_1\bigwedge \mathcal{F}_2 $ is a free factor system and  this shows that $\mathcal{K}_1 \bigwedge \mathcal{K}_2$ is a finite collection of conjugacy classes of finite rank subgroups of $\F$. Malnormality of $\mathcal{K}_1 \bigwedge \mathcal{K}_2$ now follows from the uniqueness of $\rho_1$ and $\rho_2$ and malnormality of $\mathcal{K}_1$ and $\mathcal{K}_2$.
  This proves  (1). \\
 A  conjugacy class $\alpha$ is weakly attracted to $\Lambda_i$ if and only if $\alpha$ is not carried by $\mathcal{K}_i$, so  (2) and (3) are equivalent statements. We shall prove (3). 
 
 Suppose $[g]$ is weakly attracted to $\Lambda_1$. If $[c_1]$ or $[c_2]$ are nontrivial, then $[c_1]\neq[g]\neq[c_2]$ as $[c_1], [c_2]$ are fixed by $\psi$. Since $[g]$ is weakly attracted to $\Lambda_1$, it cannot be carried by $\mathcal{K}_1$. So $[g]$ cannot be carried by any $[A]\in\mathcal{K}_1$. Hence $[g]$ cannot be carried by any $[A\cap B^w]$ where $[A]\in \mathcal{K}_1, [B]\in\mathcal{K}_2, w\in \F$. So $[g]$ cannot be carried by $\mathcal{K}_1 \bigwedge \mathcal{K}_2$. Similar argument works when $[g]$ is weakly attracted to $\Lambda_2$.

 For the other direction of item (3), suppose $[g]$ is not weakly attracted to both $\Lambda_1$ and $\Lambda_2$. This implies that $[g]$ is carried by both $\mathcal{K}_1$ and $\mathcal{K}_2$. We have four possibilities: 
 $[g]$ is carried by either $[F_{m_i}]$ or $\mathcal{F}_i$, $i=1,2$. Using $\mathcal{K}_1 \bigwedge \mathcal{K}_2 = \mathcal{F}_1\bigwedge \mathcal{F}_2 \cup [F_{m_1}] \cup [F_{m_2}]$, we are done.  
 This completes the proof of item (3).
 
 For the proof of item (4), consider the subgroup system $\psi(\mathcal{K}_1\bigwedge\mathcal{K}_2)$. We want to show that $\psi(\mathcal{K}_1\bigwedge\mathcal{K}_2) = \mathcal{K}_1\bigwedge\mathcal{K}_2$. Any conjugacy class $[g]$ is carried by $\psi(\mathcal{K}_1\bigwedge\mathcal{K}_2)$ if and only if $\psi^{-1}([g])$ is not weakly attracted to either $\psi(\Lambda_1) = \Lambda_1$  or $\psi(\Lambda_2)=\Lambda_2$, this happens if and only if $[g]$ is weakly attracted to neither $\Lambda_1$ nor $\Lambda_2$. By item (3) we then get that $[g]$ is carried by $\mathcal{K}_1\bigwedge\mathcal{K}_2$. Conversely, if $[g]$ is carried by $\mathcal{K}_1\bigwedge\mathcal{K}_2$, then $[g]$ is not weakly attracted to either $\Lambda_1$ or  $\Lambda_2$. This means $\psi^{-1}([g])$ is weakly attracted to neither $\Lambda_1$ nor $\Lambda_2$. By using item (3), we get that $\psi^{-1}[g]$ is carried by $\mathcal{K}_1\bigwedge\mathcal{K}_2$. Therefore, $[g]$ is carried by $\psi(\mathcal{K}_1\bigwedge\mathcal{K}_2)$. This shows that $\psi(\mathcal{K}_1\bigwedge\mathcal{K}_2) = \mathcal{K}_1\bigwedge\mathcal{K}_2$. Moreover, since $\psi$ is rotationless, we get $\psi([H]) = [H]$ for every $[H]\in \mathcal{K}_1\bigwedge\mathcal{K}_2$. This concludes proof of item (4).
 \end{proof}

In light of Proposition \ref{meetofnas}, we can now extend the notion of a meet of \emph{two} nonattracting subgroup systems to a collection of finitely many nonattracting subgroup systems recursively. The proof is a simple generalization of the proof of Proposition \ref{meetofnas}.

\begin{corollary}\label{sinkc}
    Let $\phi$ be exponentially growing. Let $\Lambda_1, \ldots, \Lambda_r$ be some collection of attracting laminations of $\phi$ with nonattracting subgroup systems $\mathcal{K}_1, \ldots \mathcal{K}_r$, respectively. Then the subgroup system $\mathcal{K}_1\bigwedge\mathcal{K}_2\bigwedge\ldots\bigwedge \mathcal{K}_r$ is malnormal and a conjugacy class $[g]$ is carried by this subgroup system if and only if $[g]$ is carried by the nonattracting subgroup system of each $\Lambda_q$, for $q \in  \{1\cdots, r\}$. 
    
    Moreover, $\mathcal{K} = \mathcal{K}_1\bigwedge\mathcal{K}_2\bigwedge\ldots\bigwedge \mathcal{K}_r$ is an admissible subgroup system for $\phi$. 
\end{corollary}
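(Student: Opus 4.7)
The plan is to proceed by induction on $r$, with the base case $r=2$ being exactly Proposition \ref{meetofnas}. For the inductive step I would interpret $\mathcal{K}_1 \bigwedge \cdots \bigwedge \mathcal{K}_r$ as $\mathcal{H} \bigwedge \mathcal{K}_r$, where $\mathcal{H} = \mathcal{K}_1 \bigwedge \cdots \bigwedge \mathcal{K}_{r-1}$. By the inductive hypothesis, $\mathcal{H}$ is a malnormal subgroup system and a conjugacy class is carried by $\mathcal{H}$ if and only if it is carried by each $\mathcal{K}_q$ for $q<r$. Malnormality of $\mathcal{H} \bigwedge \mathcal{K}_r$ then follows by repeating the intersection-of-malnormal-systems argument from the proof of Proposition \ref{meetofnas}, with $\mathcal{H}$ taking the role played by $\mathcal{K}_1$ there; the finiteness of the collection is inherited from the Bestvina--Feighn--Handel ``meet'' construction used in the base case.

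For the carrying characterization I would apply the base case to $\mathcal{H}$ and $\mathcal{K}_r$: a conjugacy class is carried by $\mathcal{H} \bigwedge \mathcal{K}_r$ if and only if it is carried by both $\mathcal{H}$ and $\mathcal{K}_r$, which by induction is equivalent to being carried by each $\mathcal{K}_q$. This also yields $\phi$-invariance of the meet by the same argument used to prove item (4) of Proposition \ref{meetofnas}: a conjugacy class $[g]$ is carried by $\phi(\mathcal{K})$ iff $\phi^{-1}[g]$ is not attracted to any $\Lambda_q$, iff $[g]$ is not attracted to any $\Lambda_q$ (since $\phi(\Lambda_q) = \Lambda_q$), iff $[g]$ is carried by $\mathcal{K}$.

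For the ``moreover'' clause I would verify the five conditions of the standing assumptions \ref{SA}. Condition (1), malnormality, comes from the first half. Condition (2): since each generic leaf of $\Lambda_q$ fails to be carried by $\mathcal{K}_q$, by the characterization it fails to be carried by $\mathcal{K}$, so $\Lambda_q \in \mathcal{L}^+_\mathcal{K}(\phi)$; the dual $\Lambda_q^-$ shares the same non-attracting subgroup system by Lemma \ref{NAS}(3), so it similarly lies in $\mathcal{L}^-_\mathcal{K}(\phi)$. Condition (3) follows from $\phi$-invariance of each $\mathcal{K}_q$ combined with rotationlessness of $\phi$, which upgrades invariance of the system to fixation of each component. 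Condition (4) is obtained by enlarging $C$ and taking sufficiently long generic leaf segments to define $V^+, V^-$; this is possible because no leaf of a repelling lamination is weakly attracted to any attracting lamination, so the generic leaves involved are pairwise non-asymptotic. The substantive step is condition (5): if $[c]$ is not carried by $\mathcal{K}$, the characterization from part one forces $[c]$ to not be carried by some $\mathcal{K}_q$, and then Lemma \ref{NAS}(2) asserts that $[c]$ is weakly attracted to $\Lambda_q$, which lies in $\mathcal{L}^+_\mathcal{K}(\phi)$.

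The hard part will be the bookkeeping in the inductive step around the closed indivisible Nielsen-path components $[\langle \rho_q \rangle]$ that arise when $\Lambda_q$ is geometric: these appear as singleton cyclic factors of $\mathcal{K}_q$ and, during iterated meets, must be shown to persist as distinct malnormal components rather than being absorbed into larger intersections. The uniqueness of the iNP associated with each geometric stratum (which was the decisive tool in the $r=2$ malnormality argument) together with the bottommost property noted after Proposition \ref{meetofnas} should handle this, but some care is needed to check that an iNP representative $[\rho_q]$ is not contained in any nontrivial intersection $A_{q_1} \cap A_{q_2}^{w}$ coming from non-$\rho$-components, since such a containment would create a duplicate carrying component and break the required malnormality.
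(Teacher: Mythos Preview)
Your proposal is correct and matches the paper's approach: the paper states just before the corollary that ``we can now extend the notion of meet of nonattracting subgroup systems to finitely many collection of nonattracting subgroup systems recursively. The proof is a simple generalisation of the proof of Proposition \ref{meetofnas},'' and gives no further details. Your inductive argument on $r$, together with the explicit verification of the five admissibility conditions, is exactly this generalisation spelled out; the bookkeeping concern you raise about the iNP components $[\langle \rho_q\rangle]$ is real but is glossed over in the paper as well.
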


\begin{definition}(Sink of an automorphism)\label{sink} Given any exponentially growing outer automorphism $\phi$ of $\F$, consider the full list of attracting laminations $\{\Lambda_q\}_{q = 1}^r$ and let
\[\mathcal{K}^*_\phi = \mathcal{K}_1\bigwedge\mathcal{K}_2\bigwedge\ldots\bigwedge \mathcal{K}_r, \,\,\text{where} \,\,\mathcal{K}_q = \mathcal{A}_{na}(\Lambda_q) \,\,\text{for} \,\,q = 1, \ldots,  r.\] If $\phi$ is not exponentially growing define $\mathcal{K}^*_\phi = \{[\F]\}$.  We shall call the malnormal subgroup system $\mathcal{K}_\phi^*$  the \emph{nonattracting sink} of $\phi$.

\end{definition}
It directly follows that any nontrivial conjugacy class carried by the nonattracting sink of $\phi$ grows polynomially under iteration by $\phi$.
It is easy to check that for an exponentially growing $\phi$ the nonattracting sink satisfies all the conditions of our standing assumptions of section \ref{SA} and hence it is a choice for an admissible subgroup system for $\phi$. Moreover, $\mathcal{K}_\phi^*$ is always a proper subgroup system when $\phi$ is exponentially growing. We end this section with the following corollary, which is a restatement of a known theorem.

\begin{corollary}[Theorem 3.15, \cite{Gh-20}]
    Let $\phi\in\out$ and $\mathcal{K}^*_\phi$ be its nonattracting sink. Then $\F\rtimes \langle \phi \rangle$ is relatively hyperbolic $\Leftrightarrow$  $\mathcal{K}^*_\phi$ is a proper subgroup system. 
\end{corollary}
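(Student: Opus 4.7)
The plan is to deduce both directions from Theorem \ref{fbcrh} combined with the prior characterization of relatively hyperbolic free-by-cyclic groups recalled in the introduction (due to Ghosh \cite{Gh-23} and Hagen--Niblo \cite{HagNib-18}): the extension $\F\rtimes\langle\phi\rangle$ is strongly relatively hyperbolic with respect to some proper peripheral structure if and only if every conjugacy class $[w]$ in $\F$ not carried by the $\F$-slices of the peripherals grows exponentially under iteration by $\phi$.

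For the implication $(\Leftarrow)$, suppose $\mathcal{K}^*_\phi$ is a proper subgroup system. By Definition \ref{sink} this forces $\phi$ to be exponentially growing, since otherwise $\mathcal{K}^*_\phi=\{[\F]\}$. Choose $k\geq 1$ so that $\phi^k$ is rotationless, which exists by Feighn--Handel. Attracting laminations and their nonattracting subgroup systems are invariant under taking powers, so $\mathcal{K}^*_{\phi^k}=\mathcal{K}^*_\phi$, and by Corollary \ref{sinkc} this is an admissible subgroup system for $\phi^k$. Theorem \ref{fbcrh} then yields that $\F\rtimes\langle\phi^k\rangle$ is strongly hyperbolic relative to $\{K_s\rtimes\langle\Phi^k_s\rangle\}_{s=1}^p$ for lifts $\Phi_s$ with $\Phi_s(K_s)=K_s$. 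Since $\F\rtimes\langle\phi^k\rangle$ is a finite-index subgroup of $\F\rtimes\langle\phi\rangle$ and relative hyperbolicity ascends along finite-index inclusions when one replaces each peripheral subgroup by its commensurator, we conclude that $\F\rtimes\langle\phi\rangle$ itself is strongly relatively hyperbolic; a natural peripheral structure is $\{K_s\rtimes\langle\Phi_s\rangle\}_{s=1}^p$, which is well-defined because $\phi$ permutes the conjugacy classes in $\mathcal{K}^*_\phi$ (as $\mathcal{K}^*_\phi$ is built from $\phi$-invariant data after grouping $\phi$-orbits of laminations).

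For the implication $(\Rightarrow)$, argue by contrapositive. Suppose $\mathcal{K}^*_\phi$ is not proper, so by Definition \ref{sink} we have $\mathcal{K}^*_\phi=\{[\F]\}$ and $\phi$ is not exponentially growing, hence polynomially growing. Then every conjugacy class of $\F$ grows polynomially under iteration by $\phi$. If $\F\rtimes\langle\phi\rangle$ were strongly relatively hyperbolic with respect to some proper collection $\mathcal{P}$ of peripheral subgroups, then the free-by-cyclic characterization quoted above would force every conjugacy class $[w]$ of $\F$ not contained in a conjugate of some $P\cap\F$ with $P\in\mathcal{P}$ to grow exponentially under $\phi$, which is impossible. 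Thus every conjugacy class of $\F$ must lie in some peripheral subgroup, contradicting properness of $\mathcal{P}$.

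The main technical step is the ascent of relative hyperbolicity from $\F\rtimes\langle\phi^k\rangle$ to the ambient $\F\rtimes\langle\phi\rangle$; malnormality of $\mathcal{K}^*_\phi$ together with the fact that $\phi$ permutes its components ensures that the peripheral structure produced by Theorem \ref{fbcrh} at the level of $\phi^k$ descends to a well-defined peripheral structure for $\phi$. The reverse implication is essentially a dictionary translation from the existing free-by-cyclic criterion into the language of the nonattracting sink, and requires only the observation that $\mathcal{K}^*_\phi$ being proper is literally the same as $\phi$ being exponentially growing.
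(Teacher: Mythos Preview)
The paper does not actually supply a proof of this corollary; it is presented as a restatement of a known result (Theorem~3.15 of \cite{Gh-20}), with the preceding paragraph merely recording that $\mathcal{K}^*_\phi$ is an admissible subgroup system whenever $\phi$ is exponentially growing and that $\mathcal{K}^*_\phi$ is proper exactly in that case. Your argument is correct and makes explicit what the paper leaves implicit: the $(\Leftarrow)$ direction via Corollary~\ref{sinkc} and Theorem~\ref{fbcrh} is precisely the route the paper has set up, and your extra care in passing to a rotationless power and then ascending through the finite-index inclusion is a genuine technical point the paper glosses over. The $(\Rightarrow)$ direction via the Ghosh/Hagen--Niblo characterization recalled in the introduction is the standard route and matches the cited source.
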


\section{From relatively hyperbolic free-by-free groups to admissible subgroup systems } \label {RHtoM}

In this section we will prove the second part of Theorem \ref{main1} and finish the proof. 

To prove the next theorem we have to switch back and forth between several equivalent definitions of relative hyperbolicity for groups. Osin gave a definition of relative hyperbolicity in terms of relative Dehn functions in \cite[Definition 2.35]{Osin-06}. This definition is equivalent to Bowditch's definition with ``fine graph actions" (and equivalently Farb's with BCP property) for finitely generated groups, as is our case. The  equivalence of Bowditch and  Farb's (with BCP) definitions can be found in the appendix of  Dahmani's thesis \cite[Proposition 2, Proposition 3]{Dah-03} (also see \cite[Theorem 5.1, page 1824]{Hr-10}) whereas equivalence of the definition of Farb (with BCP) with that of Osin is given by \cite{Osin-06}. Hruska's paper \cite{Hr-10} gives a good overview of the connections between these definitions and how they are equivalent in the countable groups case as well. 

In the theorem below, proof of  conclusion $(1a)$ uses Osin's definition. Moreover, the ``torsion-free" assumption on $Q$ is not a strong assumption since every subgroup of $\out$ has a torsion-free subgroup of finite index (see \cite[Theorem B]{HM-20}). Also, since $\F$ is itself torsion-free, it does not change the malnormal subgroup system we construct in $(1b)$.  

\begin{theorem}\label{necrelhyp}
    
Let $Q < \out$ be a finitely generated torsion-free subgroup  generated by a collection   $\phi_1, \cdots, \phi_k$ of exponentially growing  automorphisms such that $\phi_i, \phi_j$ do not have a common power whenever $i\neq j$. Consider the short exact sequence $1 \to \F \to E_Q \to  Q \to 1$. Suppose that $E_Q $ is finitely generated and  hyperbolic relative to some collection of subgroups $H_1, \cdots, H_p$. Then,
\begin{enumerate}
     \item [(1a)] $\{[H_1], [H_2], \ldots , [H_p]\}$ is a malnormal subgroup system in $E_Q$. 
    \item [(1b)] $\mathcal{K} = \{[H_1\cap \F], [H_2\cap \F], \ldots , [H_p \cap \F]\}$ is a (possibly empty) malnormal subgroup system in $\F$. 
    \item [(1c)] $\F$ is hyperbolic relative to the collection $\{K_s\}_{s=1}^p$, where $K_s = H_s\cap \F$. 
    \item [(1d)] For each $s$, $N_{E_Q}(K_s) = H_s$  where $N_{E_Q}(K_s)$ is the normalizer of $K_s$ in $E_Q$
\end{enumerate}
Moreover, if $E_Q$ preserves cusps, \emph{i.e.} for any $\theta\in E_Q$ there exists $w_s\in \F$ for each $s$, such that $\theta K_s \theta^{-1} = w_s K_s w_s^{-1}$. Then the following additional properties are true:
    \begin{enumerate}
    \item [(2a)] $Q$  preserves each $[K_s]$.
    \item [(2b)] There exists a q.i. section $\tau: Q \to E_Q$. Moreover, in the induced exact sequence $1 \to K_s \to H_s \to Q_s \to 1$, each $Q_s = Q$ and there exists a q.i. section from $Q \to H_s$. 
    \item [(2c)] $Q$ is a hyperbolic group. 
    \item  [(2d)] Non-attracting sink $\mathcal{K}^*_i$ of $\phi_i$ is carried by $\mathcal{K}$ for each $i\in \{1, \cdots, k\}$. 
    \item [(2e)] Each $\mathcal{L}^+_\mathcal{K}(\phi_i), \mathcal{L}^-_\mathcal{K}(\phi_i)$ is nonempty and $\mathcal{L}^\pm_\mathcal{K}(\phi_i) \cap \mathcal{L}_\mathcal{K}^\pm(\phi_j) = \emptyset$ if $i\neq j$. 
     \item [(2f)]  $\mathcal{K}$ is an admissible subgroup system for $Q$. 
 %   \item [(2g)] When $i\neq j$, $\phi_i, \phi_j$ are independent relative to $\mathcal{K}$. 
    \end{enumerate}
\end{theorem}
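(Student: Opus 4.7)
The proof would split into the two stages indicated by the statement.

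For Stage 1, items (1a)--(1d), I would invoke standard structural facts about relatively hyperbolic groups. First, almost-malnormality of peripheral subgroups is standard; since both $\F$ and $Q$ are torsion-free, so is $E_Q$, and almost-malnormality becomes strict malnormality, proving (1a). For (1b), if $x \in \F$ satisfies $K_s^x \cap K_t \neq \{1\}$ then $H_s^x \cap H_t$ contains that nontrivial intersection, which is infinite (torsion-free), forcing $s=t$ and $x \in H_s \cap \F = K_s$ via (1a). Item (1c) follows from Bowditch's characterization applied to the hyperbolic group $\F$: each $K_s$ is quasi-convex (finitely generated in a free group) and the collection is malnormal by (1b). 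For (1d), the inclusion $H_s \subseteq N_{E_Q}(K_s)$ uses $\F \triangleleft E_Q$; the reverse follows again from (1a), since any $x$ normalizing $K_s$ produces the infinite intersection $xH_sx^{-1} \cap H_s \supseteq K_s$.

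Stage 2 uses cusp preservation in essential ways. For (2a), cusp preservation gives $\theta K_s \theta^{-1} = w_s K_s w_s^{-1}$ for any lift $\theta \in E_Q$ of $\phi \in Q$, directly showing $\phi$ fixes $[K_s]$ as an $\F$-conjugacy class. For (2b), I would use (1d): writing $\theta = w_s \cdot (w_s^{-1}\theta)$ with $w_s^{-1}\theta \in N_{E_Q}(K_s) = H_s$ yields $\pi(\theta) = \pi(w_s^{-1}\theta) \in \pi(H_s) = Q_s$, so $Q_s = Q$; the q.i.\ section would be assembled from the relatively hyperbolic structure on $H_s$ together with finite generation of $K_s$. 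For (2c), the induced action of $Q$ on the coned-off hyperbolic space $\widehat{E_Q}$, after electrocuting the $\F$-direction, yields hyperbolicity of $Q$.

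The dynamical content lives in (2d)--(2e). The key mechanism is that relative hyperbolicity of $E_Q$ with cusp preservation, via the Mj--Reeves framework for trees of relatively hyperbolic spaces, forces conjugacy classes of $\F$ not carried by $\mathcal{K}$ to have exponentially flaring $\widehat{\F}$-length under iteration by some $\phi_i^{\pm 1}$ (this is the converse direction to the sufficient conditions used in Section \ref{SRH}). For (2d), classes in the nonattracting sink $\mathcal{K}^*_i$ grow only polynomially under $\phi_i$, so they cannot flare exponentially, forcing them into $\mathcal{K}$. For non-emptiness in (2e), if every attracting lamination of $\phi_i$ were carried by $\mathcal{K}$, then Theorem \ref{modwat} would force every $\phi_i$-exponentially-growing class into $\mathcal{K}$, contradicting the flare of classes outside $\mathcal{K}$. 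The disjointness part of (2e) is the principal obstacle of the theorem: a shared lamination $\Lambda \in \mathcal{L}^\pm_\mathcal{K}(\phi_i) \cap \mathcal{L}^\pm_\mathcal{K}(\phi_j)$ would produce common attracting or repelling fixed points for $\phi_i, \phi_j$ on the Bowditch boundary of $\widehat{\F}$, and a ping-pong style argument in that relatively hyperbolic setting would then extract a common power of $\phi_i, \phi_j$, contradicting the hypothesis.

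Finally, (2f) is a direct check of the five conditions of Section \ref{sec:assump}: malnormality from (1b); nonempty $\mathcal{L}^\pm_\mathcal{K}(\phi_i)$ from (2e); invariance $\phi_i(K_s) = K_s$ for the correct representative from (2a); disjoint neighborhoods $V^\pm$ from the disjointness part of (2e); and weak attraction of every non-$\mathcal{K}$ conjugacy class to an element of $\mathcal{L}^+_\mathcal{K}(\phi_i)$ from (2d) combined with Theorem \ref{modwat}. The principal obstacle of the whole argument is the disjointness in (2e); everything else assembles general relative-hyperbolicity facts with the dynamical machinery already developed in Sections \ref{Prelim}--\ref{SRH}.
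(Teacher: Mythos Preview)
Your outline for (1a)--(1d) and (2a)--(2d) tracks the paper closely: malnormality of peripherals via Osin, intersecting with $\F$, normalizer computation, and then extracting flaring from the hyperbolic metric graph bundle structure (the paper cites \cite{MjS-16} and \cite{MjR-08} explicitly for (2b)--(2d), but your sketch captures the content).

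There are, however, two genuine gaps in (2e).

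\textbf{Circularity in the nonemptiness argument.} You invoke Theorem~\ref{modwat} to argue that if all laminations of $\phi_i$ were carried by $\mathcal{K}$ then all exponentially growing classes would be. But Theorem~\ref{modwat} already \emph{assumes} that $\mathcal{K}$ is admissible for $\phi_i$, which is exactly what (2f) is trying to establish. The paper avoids this by arguing directly from the flaring obtained in (2d): any class not carried by $\mathcal{K}$ has exponentially growing electrocuted length, hence exponentially growing $L_\mathcal{K}$ by Lemma~\ref{comparison}, hence must be weakly attracted to some lamination whose generic leaves are \emph{not} carried by $\mathcal{K}$. The same circularity recurs in your (2f) sketch when you invoke Theorem~\ref{modwat} for condition~(5).

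\textbf{The disjointness mechanism is wrong.} Your proposed route for $\mathcal{L}^\pm_\mathcal{K}(\phi_i)\cap\mathcal{L}^\pm_\mathcal{K}(\phi_j)=\emptyset$---a shared lamination gives common fixed points on the Bowditch boundary of $\widehat{\F}$, then ping-pong extracts a common power---does not work as stated. Attracting laminations of a reducible $\phi_i$ do not correspond to single attracting fixed points on $\partial\widehat{\F}$, and even if they did, common fixed points alone do not yield a common power without north--south dynamics that you have not established. The paper's argument is entirely different: if $\Lambda^+\in\mathcal{L}^+_\mathcal{K}(\phi_i)\cap\mathcal{L}^+_\mathcal{K}(\phi_j)$, then by \cite[Corollary~1.4]{HM-19} the dual $\Lambda^-$ is \emph{also} common to both. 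One then builds a nested sequence of circuits $\gamma_k$ inside a generic leaf $\ell$ of $\Lambda^+$ with $|\gamma_k|_{el}\to\infty$. Since $\ell$ is a generic leaf of an attracting lamination for both $\phi_i$ and $\phi_j$, it cannot be weakly attracted to any repelling lamination of either, so the circuits $\gamma_k$ fail the $3$-out-of-$4$ stretch for arbitrarily large exponents---contradicting the flaring that was already deduced from relative hyperbolicity in (2d). The key external input you are missing is the Handel--Mosher duality result; without it there is no reason the dual laminations coincide, and your boundary argument cannot substitute for it.
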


\begin{proof}
 For the proof of $(1a)$, we use Osin's definition of relative hyperbolicity \cite[Definition 2.35]{Osin-06}. 
 Since $\F\rtimes \widehat{Q}$ is hyperbolic relative to the collection of subgroups $\{H_s\}_{s=1}^p$, by Osin's characterization (\cite[Definition 2.35]{Osin-06}) relative Dehn-function exists and it is linear. By \cite[Proposition 2.36]{Osin-06} and using the fact that $E_Q$ is torsion-free, we get (1a). 
 
   Let $[K_s] = [H_s\cap \F]$ and assume  $K_s\cap w^{-1} K_t w \neq \{e\} $ for some $w\in \F$ where $e$ is the identity element. This implies  $H_s \cap \F \cap w^{-1} H_t w \cap \F = H_s\cap w^{-1} H_t w \cap \F \neq \{e\}$.  Hence $H_s\cap w^{-1} H_t w \neq \{e\} $. By item (1a) we conclude that $w \in H_s\cap \F = K_s$ and $s = t$. This proves $(1b)$. 

   $(1c)$ follows immediately from $(1b)$. 
   
   To prove $(1d)$, observe that  if  $\xi\in E_Q $, then $\xi(K_s) = \xi K_s \xi^{-1} =  \xi (H_s\cap \F) \xi^{-1} = \xi H_s \xi^{-1} \cap \F$. So $\xi(K_s) = K_s \Leftrightarrow \xi\in H_s$. 
   
   $(2a)$ follows directly from the hypothesis that $E_Q$ preserves cusps. $(2b)$ follows from \cite[Theorem 2.10]{Pal-10} and \cite[Theorem 5.2]{MjS-16}. $(2c)$ follows from \cite[Proposition 5.7]{MjS-16}. 

   Our argument to prove $(2d)$ will be  similar to the proof of the converse direction in \cite[Proposition 5.17]{MjS-16}. Let $\mathcal{E}(E_Q , K_1,\ldots, K_p)$ be the partially  electrified  Cayley graph of $E_Q $ obtained by coning-off translates of Cayley graphs of $K_s'$ s in $E_Q $. Then $\mathcal{E}(E_Q , K_1,\ldots, K_p)$ is hyperbolic relative to translates of $Q_s (= Q)$ in $E_Q $ by the  Mj-Reeves combination theorem \cite[Theorem 4.7]{MjR-08} (see also \cite[Definition 1.49, Lemma 1.50]{MjS-16}). Since $Q$ is hyperbolic, we deduce that $\mathcal{E}(E_Q, K_1,\ldots, K_p)$ is  a hyperbolic metric space (\cite[Section 7]{Bow-97}).
   This gives us a hyperbolic metric graph bundle $\mathcal{E}(E_Q , K_1,\ldots, K_p)$ over $Q$. Hence by \cite[Proposition 5.8]{MjS-16} a flaring condition (\cite[Definition 1.12]{MjS-16}) is satisfied. Since flaring holds, $\mathcal{K}^*_i$ must be carried by $\mathcal{K}$ for all $i$ (as every conjugacy class carried by $\mathcal{K}^*_i$ grows at most polynomially under iteration by $\phi_i$ for each $i$). This completes proof of $(2d)$. 

    As seen in proof of $(2d)$, the metric graph bundle $\mathcal{E}(E_Q , K_1,\ldots, K_p)$ over $Q$ satisfies a flaring condition. Fix some $i$ arbitrarily. Any conjugacy class $[g]$, which is not carried by $\mathcal{K}$, must grow exponentially when iterated by any $\phi_i$, in the  electrified  metric. This implies that ${|| [g] ||}_{el}$ grows exponentially and hence $\mathcal L_\mathcal{K}([g])$ grows exponentially by Lemma \ref{comparison}. Hence, there must exist some exponentially growing strata and an associated lamination to which $[g]$ gets weakly attracted  under iteration by $\phi_i$, implying that $\mathcal{L}_\mathcal{K}^+(\phi_i)$ is nonempty (equivalently, some attracting lamination of $\phi_i$ is not carried by $\mathcal{K}$). For each attracting lamination in $\mathcal{L}_\mathcal{K}^+(\phi_i)$, its dual is also not carried by $\mathcal{K}$, hence $\mathcal{L}_\mathcal{K}^-(\phi_i)$ is also nonempty.

     Let $f: G\to G$ be a CT map for $\phi_i$ and assume  $\Lambda^+ \in \mathcal{L}^+_\mathcal{K}(\phi_i) \cap \mathcal{L}^+_\mathcal{K}(\phi_j) $  for some $j\neq i$. 
    Let $\ell$ be a generic leaf of $\Lambda^+$ which has height $r$. Attracting or repelling neighborhoods for $\Lambda^+$ are constructed by taking some sufficiently long segment of height $r$ in $\ell$. Pick some such finite leaf subsegment, say $\gamma_1$, of $\ell$. Since $G$ is a finite graph, by replacing $\gamma_1$ with a longer subsegment of $\ell$, if necessary, we may assume that $\gamma_1$ is a circuit in $G$. Choose $\gamma_2$ as a subsegment of $\ell$ which contains $\gamma_1$ as a subpath and has twice the length in the  electrified  metric. Arguing as before, we may choose $\gamma_2$ to be a circuit. Proceeding inductively, we build up a sequence of circuits $\{\gamma_k\}_k$ which converges to $\ell$ and every term itself is a subpath of $\ell$.  Using \cite[Corollary 1.4]{HM-19}  we have that the lamination dual to $\Lambda^+$, say $\Lambda^-$, is the  same for both $\phi_i, \phi_j$. Using the sequence of circuits it now follows that 3-outof-4 stretch lemma \ref{34} will fail no matter how large of an exponent we choose, since $\ell$, which a generic leaf of an attracting lamination,  cannot be weakly attracted to any repelling lamination of either $\phi_i$ or $\phi_j$. This concludes proof of $(2e)$. 

    To prove $(2f)$ we only need to show that every conjugacy class not carried by $\mathcal{K}$ is weakly attracted to some element of $\Lambda^+ \in \mathcal{L}^+_\mathcal{K}(\phi_i)$ for each $i$, the rest of the conditions follow from $(1b), (2a), (2e)$. Fix some $i$. Using the flaring condition for metric graph bundle, as in proof of $(2d)$, we can conclude that any conjugacy class $[g]$ not carried by $\mathcal{K}$ will have exponential growth, under iteration by $\phi_i$, in the  electrified  metric on $\widehat{\F}$ with cosets of $K_i$ coned-off.  Pick a $CT$ map for $\phi_i$, say $f: G\to G$. Using the setup from \ref{sec:3}, we get $L_\mathcal{K}([g])$ must also grow exponentially under iteration by $f_\#$. Thus, $[g]$ is weakly attracted to some attracting lamination in $\mathcal{L}^+_\mathcal{K}(\phi_i)$. This completes the proof.

\end{proof}

\begin{proof}[Proof of Theorem \ref{main1}]
To prove relative hyperbolicity we use Lemma \ref{fbfrh}. The other direction of the proof is by Theorem \ref{necrelhyp}.  
\end{proof}

\bibliographystyle{alpha}
\def\bibfont{\footnotesize}
\bibliography{biblo1}

\end{document}